\title{Exact convergence rates to derivatives of local time for some self-similar Gaussian processes}
\author{Minhao Hong}
\newtheorem{theorem}{Theorem}[section]
\newtheorem{lemma}[theorem]{Lemma}
\newtheorem{corollary}[theorem]{Corollary}
\newtheorem{proposition}[theorem]{Proposition}
\newtheorem{remark}[theorem]{Remark}
\numberwithin{equation}{section}
\numberwithin{figure}{section}
\def\R{\mathbb{R}}
\def\E{{{\mathbb E}\,}}
\def\C{\mathbb{C}}
\def\Var{{\mathop {{\rm Var\, }}}}
\def\Cov{{\mathop {{\rm Cov\, }}}}
\def\mS{\mathcal{S}}
\def\display{\displaystyle}
\begin{document}
	\maketitle
	
	\begin{abstract}
		In this article, for some $d-$dimensional Gaussian processes 
		\[X=\big\{X_t=(X^1_t,\cdots,X^d_t):t\ge0\big\},\]
		whose components are i.i.d. $1-$dimensional self-similar Gaussian process with Hurst index $H\in(0,1)$, we consider the asymptotic behavior of  approximation of its $\boldsymbol{k}-$th derivatives of local time under certain mild conditions, where 
		$\boldsymbol{k}=(k_1,\cdots,k_d)$ and $k_\ell$'s are non-negative real numbers.  We will give a derivative version of the limit theorems for functional of Gaussian processes and use this result to get the asymptotic behaviors.
		\vskip.2cm\noindent{\bf Keywords:} Gaussian processes, Hurst index, Derivatives of local time, Method of moments.
		\vskip.2cm\noindent{\bf Subject Classification:} Primary 60F05, 60F25; Secondary 60G15, 60G18
	\end{abstract}
	\section{Introduction}
	
	For $H\in(0,1)$, consider $G^{H}_d$, the set of $d$-dimensional Gaussian processes $$X=\{X_t=(X^1_t,\cdots,X^d_t):t\ge0\}$$ whose components are i.i.d. $1-$dimensional self-similar Gaussian processes with Hurst index $H\in(0,1)$ and satisfying the following local non-determinism:
	
	{\bf (LND)}: Given $n\ge1$, there exists a positive constant $\kappa_{H,n}$ s.t. 
	\begin{align}\label{local}
		\Var\Big(\sum_{j=1}^{n}x_j\big(X^1_{t_j}-X^1_{t_{j-1}}\big)\Big)\ge \kappa_{H,n}\sum_{j=1}^{n}|x_j|^2\big(t_j-t_{j-1}\big)^{2H}
	\end{align}
	for any $x_1,x_2,\cdots,x_n\in\R$ and  $0=t_0<t_1<t_2<\cdots<t_n<\infty$.
	
	 Let $T>0$, $x\in\R^d$ and $\boldsymbol{k}=(k_1,\cdots,k_d)$ with $k_1,\cdots,k_d\ge0$, we denote $L^{(\boldsymbol{k})}(T,x)$, the $\boldsymbol{k}-$th order derivatives of local time for $X\in G^{H}_d$, by the limit of 
	\begin{align}\label{higherorder}
		L_{\varepsilon}^{(\boldsymbol{k})}(T,x)=\int_{0}^{T}p_{\varepsilon}^{(\boldsymbol{k})}(X_t+x)dt
	\end{align} 
	as $\varepsilon\downarrow0$ in $L^p(p\ge1)$, where $\displaystyle p_\varepsilon(x)=\frac{1}{(2\pi\varepsilon)^{\frac{d}{2}}}\exp\Big(-\frac{1}{2\varepsilon}|x|^2\Big)$ is the heat kernel function in $\R^d$ and $p^{(\boldsymbol{k})}_\varepsilon(x)$ is the $\boldsymbol{k}-$th derivatives of $p_\varepsilon(x)$. In particular, when $\boldsymbol{k}=\boldsymbol{0}$, we have $\displaystyle p_\varepsilon^{(\boldsymbol{0})}(x)=p_\varepsilon(x)$ and $\displaystyle L(T,x)=L_{\varepsilon}^{(\boldsymbol{0})}(T,x)$ is the local time for $X$. Local time and its derivatives have been researched by many authors, see \cite{no2007}, \cite{wx2010}, \cite{l2022}, \cite{y2014}, \cite{ghx2019}, \cite{hx2020} and references there in. 
	
	An important question for local time and its derivatives is the asymptotic behavior of approximation $L_{\varepsilon}^{(\boldsymbol{k})}(T,x)$. In fact, heat kernel $p_\varepsilon(x)$ is also used in the construction of approximation of self-intersection local time or its derivatives, whose asymptotic behavior  has been researched by \cite{m2008}, \cite{jn2017} and \cite{jn2019} (We recommend \cite{r1987}, \cite{hn2005}, \cite{y2020}, \cite{yy2015} and references there in for self-intersection local time and derivatives of local time). It is natural to consider the exact rate of convergence to local time or derivatives of local time. In early age the convergence rate of local time's definition $\display L^x_T:=\lim\limits_{\varepsilon\downarrow0}\frac{1}{\varepsilon}\int_{0}^{T}\mathbf{1}_{[x,x+\varepsilon]}\big(X_t\big)dt$
	has been discussed for Brownian motion in \cite{k1994} and for symmetric L\'evy processes in \cite{mr1994}, where the limit theorems exists with probability $1$. 
	
	In this article, we will discuss the exact convergence rate of $L_{\varepsilon}^{(\boldsymbol{k})}(T,x)$ for $G^H_d$. From Remark 1.4 in \cite{hx2020}, $L_{\varepsilon}^{(\boldsymbol{k})}(T,x)$ converges to $L^{(\boldsymbol{k})}(T,x)$ in $L^p(p\ge1)$ when $H(2|\boldsymbol{k}|+d)<1$, where $\display|\boldsymbol{k}|=\sum_{\ell=1}^{d}k_\ell$. To get a more accurate asymptotic property, given $\sigma>0$, $d\ge1$ and $H\in(0,1)$, we introduce $G^{\sigma,H}_d$,  the set of $d$-dimensional Gaussian processes $X=\{X_t=(X^1_t,\cdots,X^d_t):t\ge0\}$ whose components are i.i.d. $1-$dimensional self-similar Gaussian processes with Hurst index $H$ and satisfying the following properties:
	
	{\bf (A)} (Strong local non-determinism): $\{X^1_t:t\ge0\}$ satisfies the following strong local non-determinism property: there exists a constant $\kappa_H>0$ s.t. for any integer $m\ge1$ and any times $0=s_0<s_1\le s_2\le\cdots s_m<t<\infty$, 
	\begin{align}\label{slocal}
		\Var\Big(X^1_t\big|X^1_{s_1},\cdots,X^1_{s_m}\Big)\ge\kappa_H\min_{1\le j\le m}\big|t-s_{j}\big|^{2H}.
	\end{align}
	
	{\bf (B)}: There exists a constant $\eta>0$ and a non-negative increasing function $\phi(\varepsilon)$ on $[0,1/\eta]$ with $\lim\limits_{\varepsilon\downarrow0}\phi(\varepsilon)=0$, such that 
	\begin{align}\label{prob}
		0\le h^{2H}\big(\sigma-\phi(h/t)\big)\le\Var\Big(X^1_{t+h}-X^1_{t}\Big)\le h^{2H}\big(\sigma+\phi(h/t)\big)
	\end{align}
	for any $t\ge0$ and $h\in[0,1/\eta]$.
	
	{\bf (C):}  There exists a non-negative decreasing function $\psi(\eta)$ on $[1,\infty)$ with $\lim\limits_{\eta\to\infty}\psi(\eta)=0$ such that 
	\begin{align}\label{proc}
		\Big|\E\big(X^1_{t_4}-X^1_{t_3}\big)\big(X^1_{t_2}-X^1_{t_1}\big)\Big|\le\psi(\eta)\big(t_4-t_3\big)^H\big(t_2-t_1\big)^H
	\end{align}
	for any $0\le t_1\le t_2 \le t_3\le t_4<\infty$ and $\eta>1$ satisfying (i) $\frac{\Delta t_2}{\Delta t_4}\le\frac{1}{\eta}$ or (ii) $\frac{\Delta t_2}{\Delta t_4}\ge\eta$ or (iii) $\frac{\max\{\Delta t_2,\Delta t_4\}}{\Delta t_3}\le\frac{1}{\eta}$, where $\Delta t_i=t_i-t_{i-1}$, $i=2,3,4$.
	\begin{remark}
		Theorem 2.6 in \cite{n1989} shows that for a Gaussian process, we can get local non-determinism (\ref{local}) from Strong local non-determinism (\ref{slocal}) so that $G^{\sigma,H}_d$ is a subset of $G^{H}_d$. From Lemma 2.4--2.6 in \cite{sxy2019} and Corollary 5.3--5.4 in \cite{hlx2023}, we get that self-similar Gaussian processes including sub-fractional Brownian motion (sfBm) and bi-fractional Brownian motion (bi-fBm) belong to $G^{\sigma,H}_d$.
	\end{remark}
	We can use limit theorems for functional of $X$ in \cite{hlx2023} to get the asymptotic behavior of $L_{\varepsilon}(T,x)$. Using the definition of $p_{\varepsilon}$, we obtain 
		\begin{align}\label{functional1}
		L_{\varepsilon}(T,x)=\int_{0}^{T}p_{\varepsilon}(X_t+x)dt=\varepsilon^{-\frac{d}{2}}\int_{0}^{T}p_{1}\big(\varepsilon^{-\frac12}(X_t+x)\big)dt,
	\end{align} 
	where $p_1$ belongs to function space 
	\begin{align}\label{Theta2}
		\Theta^2=\Big\{f\in L^2(\R^d):\int_{\R^d}|f(u)||u|^2 dx<\infty,\int_{\R^d}u f(u)dx=0\Big\}
	\end{align} 
	defined in \cite{hlx2023}. Using the notation $\displaystyle\widehat{f}(x)=\int_{\R^d}f(u)e^{-\iota x\cdot u}du$, where "$\cdot$" is the inner product in Euclidean space $\R^d$, Theorem 1.1--1.3 in \cite{hlx2023} tell us that for any $f\in\Theta^2$, $x\in\R^d$ and $X\in G^{\sigma,H}_d$, there exist:
	
	(1): When $H(4+d)<1$,
		\begin{align*}
		n^{H\beta} \Big(n^H\int^T_0 f(n^H(X_s-x))ds&-L(T,-x)\int_{\mathbb{R}^d} f(u)du\Big)\\&\overset{L^p}{\longrightarrow} \int_{\mathbb{R}^d} \Big(- \frac{1}{2}\sum\limits^d_{i,k=1} u_i L^{({\bf e}_i+{\bf e}_k)}(T,-x)u_k\Big) f(u) du
	\end{align*}
	for any $p\geq 1$ as $n$ tends to infinity, where ${\bf e}_i$ is the $i$-th unit vector in $\mathbb{R}^d$.
		
	(2): When $H(4+d)=1$,
	\begin{align*}
		& \left\{(\ln n)^{-\frac{1}{2}} n^{\frac{Hd+1}{2}} \Big(\int^T_0 f(n^H(X_s-x))ds-n^{-Hd}L(T,-x)\int_{\mathbb{R}^d} f(u)du\Big),\; 
		T\ge 0\right\} \\
		&\qquad\qquad\qquad\qquad\qquad\qquad\qquad\qquad\qquad\qquad\qquad \overset{law}{\longrightarrow } \ \
		\Big\{  \sqrt{D_{H,d,f}} W\big(L(T,-x)\big) \,, T\ge
		0\Big\}
	\end{align*}
	in the space $C([0,\infty ))$ as $n$ tends to infinity,  where  $W $ is a real-valued standard Brownian
	motion independent of $X$ and
	\begin{align}\label{DHdf}
		D_{H,d, f}   \nonumber
		&=\frac{2}{(2\pi)^d} \lim_{n\to\infty}\frac{1}{\ln n}   \int_0^{n} \int_{\mathbb{R}^d}\Big|\widehat{f}(y)-\widehat{f}(0)\Big|^2e^{-\frac{\sigma}{2}|y|^2s^{2H}}dyds \\ 
		&=\frac{2}{(2\pi)^d \sigma^{\frac{1}{2H}}} \int_{\mathbb{R}^d}\Big| \int_{\mathbb{R}^d} f(z) \frac{1}{2}(z\cdot u)^{2} dz\Big|^2e^{-\frac{1}{2}|u|^2}du.
	\end{align}
	
	(3): When $H(4+d)>1$ and $Hd<1$, 
	\begin{align*}
		& \left\{ n^{\frac{Hd+1}{2}} \Big(\int^T_0 f(n^H(X_s-x))ds-n^{-Hd}L(T,-x)\int_{\mathbb{R}^d} f(u)du\Big),\; 
		T\ge 0\right\} \\&\qquad\qquad\qquad\qquad\qquad\qquad\qquad\qquad\qquad\qquad\overset{law}{\longrightarrow }  \left\{ \sqrt{C_{H,d,f}} W\big(L(T,-x)\big) \,, T\ge
		0\right\}
	\end{align*}
	in the space $C([0,\infty ))$ as $n$ tends to infinity, where $W $ is a real-valued standard Brownian
	motion independent of $X$ and 
	\begin{align*}
		C_{H,d, f} 
		&=\frac{2}{(2\pi)^d}   \int_0^\infty \int_{\mathbb{R}^d}\Big|\widehat{f}(y)-\widehat{f}(0)\Big|^2e^{-\frac{\sigma}{2}|y|^2 s^{2H}}dyds.
	\end{align*}

	Then it is easy to get the following results of $L_\varepsilon(T,x)$:
	
	(1): When $H(4+d)<1$, 
	\begin{align*}
		\varepsilon^{-1}\Big(L_\varepsilon(T,x)-L(T,x)\Big)
		\stackrel{L^p}{\longrightarrow}-\frac{1}{2}\sum_{i=1}^{d}L^{(2\boldsymbol{e}_i)}(T,x)
	\end{align*}
	as $\varepsilon\downarrow0$, where $\boldsymbol{e}_{i}$ are the $i-$th unit vector in $\mathbb{R}^d$ for $i=1,2,\cdots,d$.
	
	(2): When $H(4+d)=1$,
	\begin{align*}
		\Big\{\ln^{-\frac12}(1+\varepsilon^{-\frac12})\varepsilon^{-1}\Big(L_\varepsilon(T,x)-L(T,x)\Big):T>0\Big\}\stackrel{law}{\longrightarrow}\Big\{\sqrt{D_{H,d,p_1}}W\big(L(T,x)\big):T>0\Big\}
	\end{align*}
	in space $C([0,\infty))$ as $\varepsilon\downarrow0$, where $W$ is a real-valued standard Brownian motion independent of $X$ and 
	\[D_{H,d,p_1}=\frac{1}{2H(2\pi)^d\sigma^{\frac{1}{2H}}}\int_{\R^d}|x|^4e^{-\frac{1}{2}|x|^2}dx.\]
	
	(3): When $H(4+d)>1$ and $Hd<1$,
	\begin{align*}
		\Big\{\varepsilon^{-\frac{1-Hd}{4H}}\Big(L_\varepsilon(T,x)-L(T,x)\Big):T>0\Big\}\stackrel{law}{\longrightarrow}\Big\{\sqrt{C_{H,d,p_1}}W\big(L(T,x)\big):T>0\Big\}
	\end{align*}
	in space $C([0,\infty))$ as $\varepsilon\downarrow0$, where $W$ is a real-valued standard Brownian motion independent of $X$ and 
	\[C_{H,d,p_1}=\frac{2}{(2\pi)^{d}\sigma^{\frac{1}{2H}}}\int_{0}^{\infty}\int_{\R^d}|e^{-\frac{1}{2}|y|^2}-1|^2e^{-\frac{1}{2}|y|^2s^{2H}}dyds.\]

However, when $|\boldsymbol{k}|>0$, we can not apply Theorem 1.1--1.3 of \cite{hlx2023} to finding the convergence rate of $L^{(\boldsymbol{k})}_{\varepsilon}(T,x)$ to $L^{(\boldsymbol{k})}(T,x)$. Now
\begin{align}\label{functional2}
	L_{\varepsilon}^{(\boldsymbol{k})}(T,x)=\int_{0}^{T}p_{\varepsilon}^{(\boldsymbol{k})}(X_t+x)dt=\varepsilon^{-\frac{|\mathbf{k}|+d}{2}}\int_{0}^{T}p_{1}^{(\boldsymbol{k})}\big(\varepsilon^{-\frac12}(X_t+x)\big)dt
\end{align}  
and from Proposition 3.6 in \cite{h2016},  $\displaystyle\int_{\R^d}p_1^{(\boldsymbol{k})}(x)dx=0$ so that this asymptotic behavior belongs to the "zero energy" case (see \cite{hnx2014}, \cite{nx2013}, \cite{nx2014}, \cite{nx2019} and \cite{jnp2021} for more information).

The main work of this article is to extend limit theorems for functional of $X$ when $f$ is derivative of another function and use the theorems to get the exact rate of convergence of $L_{\varepsilon}^{(\boldsymbol{k})}(T,x)$ to  $L^{(\boldsymbol{k})}(T,x)$ when $|\boldsymbol{k}|>0$. Note that $p_1(x)$ belongs to the Schwartz space $\mS(\R^d)$ of rapidly decreasing functions,  which includes all $C^{\infty}$ function $f:\R^d\to\C$ such that given $\boldsymbol{k}=(k_1,\cdots,k_d)$ with non-negative integers $k_1,\cdots,k_d$ and non-negative integer $N$, there exists a positive number $C_{\boldsymbol{k},N}$, s.t.
\[|f^{(\boldsymbol{k})}(x)|\le C_{\boldsymbol{k},N}\big(1+|x|^2\big)^{-N}\] 
for all $x\in\R^d$, where $|z|$ is the Euclidean norm of complex number if $z\in\C$. Given $N\ge1$ and $d\ge1$, we denote $\widetilde{\mS}_{d,1}={\mS}(\R^d)$ 
and   
\begin{align}
	\widetilde{\mS}_{d,N}=\Big\{f\in\widetilde{\mS}_{d,N-1}:\int_{\R^d}x_{\ell_1}x_{\ell_2}\cdots x_{\ell_{N-1}} f(x)dx=0,\ell_j=1,2,\cdots,d\textit{ for any }j=1,2\cdots,N-1\Big\}.
\end{align}
for $N=2,3,\cdots$. The following are main results of this article:
\begin{theorem}\label{thmlp}
	Assume $X\in G^{H}_{d}$ for some $d\ge1$ and $H\in(0,1)$. Given any integer $N\ge1$ and $d-$dimensional vector $\boldsymbol{k}=(k_1,\cdots,k_d)$ with non-negative integers $k_1,\cdots,k_d$, When $H(2|\boldsymbol{k}|+d)<1-2NH$,
	\begin{align*}
		\varepsilon^{-\frac{N}{2}}\bigg(\varepsilon^{-\frac{|\boldsymbol{k}|+d}{2}}\int_{0}^{T}f^{(\boldsymbol{k})}&\big(\varepsilon^{-\frac12}(X_t+x)\big)dt-\widehat{f}(0)L^{(\boldsymbol{k})}(T,x)\bigg)\\&\stackrel{L^p}{\longrightarrow}\frac{\iota^{N}}{N!}\sum_{i_1,\cdots,i_{N}=1}^{d}L^{(\boldsymbol{k}+\boldsymbol{e}_{i_1}+\boldsymbol{e}_{i_2}+\cdots+\boldsymbol{e}_{i_{N}})}(T,x)\int_{\R^d}v_{i_1}v_{i_2}\cdots v_{i_{N}}f(v)dv
	\end{align*}
	for any $f\in\widetilde{\mS}_{d,N}$, where $\boldsymbol{e}_{i}$ is the $i-$th unit vector in $\mathbb{R}^d$ for $i=1,2,\cdots,d$ and $\iota=\sqrt{-1}$.
\end{theorem}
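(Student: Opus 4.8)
The plan is to move to the Fourier side, Taylor‑expand $\widehat f$ at the origin up to order $N$ (the moment conditions defining $\widetilde{\mS}_{d,N}$ force the Taylor coefficients of orders $1,\dots,N-1$ to vanish), read off the claimed limit from the order‑$N$ coefficient, and kill the Taylor remainder by the method of moments. First I would record the Fourier reduction: using $\widehat{f^{(\boldsymbol k)}}(\eta)=(\iota\eta)^{\boldsymbol k}\widehat f(\eta)$ and Fourier inversion, the scaling identity \eqref{functional2} extends to a general $f$ as
\[
\varepsilon^{-\frac{|\boldsymbol k|+d}{2}}\int_0^T f^{(\boldsymbol k)}\big(\varepsilon^{-1/2}(X_t+x)\big)\,dt
=\frac{1}{(2\pi)^d}\int_0^T\!\!\int_{\R^d}\widehat f\big(\varepsilon^{1/2}\eta\big)(\iota\eta)^{\boldsymbol k}e^{\iota\eta\cdot(X_t+x)}\,d\eta\,dt .
\]
Because $H(2|\boldsymbol k|+d)<1$ and $H\big(2(|\boldsymbol k|+N)+d\big)<1$, each of $L^{(\boldsymbol k)}(T,x)$ and every $L^{(\boldsymbol k+\boldsymbol e_{i_1}+\cdots+\boldsymbol e_{i_N})}(T,x)$ exists in $L^p$ (by \cite{hx2020}) and is produced by the same kind of oscillatory integral after inserting a Gaussian mollifier $e^{-\delta|\eta|^2/2}$ and letting $\delta\downarrow0$.

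Subtracting $\widehat f(0)L^{(\boldsymbol k)}(T,x)$ and then adding and subtracting the order‑$N$ Taylor term of $\widehat f(\varepsilon^{1/2}\eta)$, the quantity in the statement becomes $\varepsilon^{-N/2}$ times
\[
\lim_{\delta\downarrow0}\frac{1}{(2\pi)^d}\int_0^T\!\!\int_{\R^d}\mathcal R_N\big(\varepsilon^{1/2}\eta\big)\,e^{-\delta|\eta|^2/2}(\iota\eta)^{\boldsymbol k}e^{\iota\eta\cdot(X_t+x)}\,d\eta\,dt,\quad
\mathcal R_N(w):=\widehat f(w)-\widehat f(0)-\frac{1}{N!}\!\sum_{i_1,\dots,i_N=1}^d\! w_{i_1}\!\cdots w_{i_N}\,\partial_{i_1}\!\cdots\partial_{i_N}\widehat f(0).
\]
I would then check that the subtracted order‑$N$ term is exactly the right‑hand side of the theorem: one has $\partial_{i_1}\cdots\partial_{i_N}\widehat f(0)=(-\iota)^N\int_{\R^d} v_{i_1}\cdots v_{i_N}f(v)\,dv$, while $(\iota\eta)^{\boldsymbol k}\eta_{i_1}\cdots\eta_{i_N}$ is a unimodular constant times $(\iota\eta)^{\boldsymbol k+\boldsymbol e_{i_1}+\cdots+\boldsymbol e_{i_N}}$, and collecting these constants against the mollified representation of $L^{(\boldsymbol k+\boldsymbol e_{i_1}+\cdots+\boldsymbol e_{i_N})}(T,x)$ reproduces the factor $\iota^N/N!$ and the sum of local‑time derivatives.

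It then remains to show that $D_\varepsilon$, the displayed quantity times $\varepsilon^{-N/2}$, tends to $0$ in $L^p$; I would do this by showing $\E|D_\varepsilon|^{2m}\to0$ for every integer $m\ge p/2$. Since $X$ is Gaussian with i.i.d.\ components and $X_0=0$, expanding $|D_\varepsilon|^{2m}$ and taking expectations gives, as a limit of the $\delta$‑mollified versions,
\[
\E|D_\varepsilon|^{2m}\le C\,\varepsilon^{-mN}\!\!\int_{[0,T]^{2m}}\!\int_{(\R^d)^{2m}}\prod_{j=1}^{2m}\big|\mathcal R_N(\varepsilon^{1/2}\eta_j)\big|\,|\eta_j|^{|\boldsymbol k|}\ \prod_{\ell=1}^d\exp\!\Big(-\tfrac12\Var\big(\sum\nolimits_{j}\eta_{j,\ell}X^1_{t_j}\big)\Big)\,d\eta\,dt .
\]
Restricting by symmetry to $t_1<\cdots<t_{2m}$, the substitution $\zeta_j=\sum_{i\ge j}\eta_i$ turns $\sum_j\eta_{j,\ell}X^1_{t_j}$ into $\sum_j\zeta_{j,\ell}(X^1_{t_j}-X^1_{t_{j-1}})$, so \eqref{local} bounds the variance below by $\kappa_{H,2m}\sum_j|\zeta_{j,\ell}|^2(t_j-t_{j-1})^{2H}$. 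Using the uniform bound $|\mathcal R_N(w)|\le C\min\{|w|^{N},|w|^{N+1}\}$ (valid since $\widehat f\in\mS(\R^d)$), which gives $\varepsilon^{-N/2}|\mathcal R_N(\varepsilon^{1/2}\eta)|\le C|\eta|^{N}$ for all $\eta$, and then carrying out the Gaussian integrals in the $\zeta_j$, one is left with a bound $C\int_{0<t_1<\cdots<t_{2m}<T}\prod_j(t_j-t_{j-1})^{-H(b_j+d)}\,dt$ with $0\le b_j\le 2(|\boldsymbol k|+N)$; the hypothesis $H(2|\boldsymbol k|+d)<1-2NH$, i.e. $H\big(2(|\boldsymbol k|+N)+d\big)<1$, makes each exponent $>-1$, so this time integral is finite and the bound is $\varepsilon$‑independent. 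Since the same majorant is integrable while $\varepsilon^{-N/2}\mathcal R_N(\varepsilon^{1/2}\eta)=O(\varepsilon^{1/2}|\eta|^{N+1})\to0$ pointwise, dominated convergence yields $\E|D_\varepsilon|^{2m}\to0$.

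I expect the technical heart to be this last step: producing the oscillatory space–time integral bounds uniformly in the mollification parameter $\delta$ and in $\varepsilon$ with the precise power $\varepsilon^{-mN}$ correctly absorbed, and justifying the interchange of the two limits ($\delta\downarrow0$ and $\varepsilon\downarrow0$) with the integrations. Pushing the integrability condition down to exactly $H\big(2(|\boldsymbol k|+N)+d\big)<1$ — the very threshold under which $L^{(\boldsymbol k+\boldsymbol e_{i_1}+\cdots+\boldsymbol e_{i_N})}(T,x)$ exists — requires careful bookkeeping of how the $2(|\boldsymbol k|+N)$ powers of each $\eta_j$ distribute among consecutive increments after the Abel summation, rather than the cruder estimate $|\mathcal R_N(w)|\le C|w|^{N+1}$.
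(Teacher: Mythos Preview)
Your approach is correct and coincides with the paper's: write the difference on the Fourier side, subtract the order-$N$ Taylor term of $\widehat f$ at $0$ (with the lower-order terms vanishing since $f\in\widetilde\mS_{d,N}$), bound the remainder uniformly by $C|u|^N$, and then control the $m$-th moment via the Abel substitution $u_j=v_j-v_{j+1}$, local non-determinism \eqref{local}, and dominated convergence. The paper carries this out without the auxiliary mollifier $e^{-\delta|\eta|^2/2}$ and handles the ``bookkeeping'' you anticipate simply by expanding $\prod_j(|v_j|^{|\boldsymbol k|+N}+|v_{j+1}|^{|\boldsymbol k|+N})$ over the index set $\mS_m$ in \eqref{mS}, which immediately gives the sharp integrability threshold $H\big(2(|\boldsymbol k|+N)+d\big)<1$.
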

\begin{theorem}\label{thmdis}
	Assume $X\in G^{\sigma,H}_{d}$ for some $d\ge1$ and $H\in(0,1)$. Given any integer $N\ge1$ and $d-$dimensional vector $\boldsymbol{k}=(k_1,\cdots,k_d)$ with non-negative integers $k_1,\cdots,k_d$, when $H(2|\boldsymbol{k}|+d)\ge1-2NH$ and $H(2|\boldsymbol{k}|+d)<1$, 
	 \begin{align*}
	\ell_{\varepsilon,H,d}^{(|\boldsymbol{k}|,N)}\bigg(\varepsilon^{-\frac{|\boldsymbol{k}|+d}{2}}\int_{0}^{T}f^{(\boldsymbol{k})}\big(\varepsilon^{-\frac12}(X_t+x)\big)dt&-\widehat{f}(0)L^{(\boldsymbol{k})}(T,x)\bigg)\stackrel{law}{\longrightarrow}\Big\{\sqrt{D_{H,d}}W\big(L(T,x)\big):T>0\Big\}
	\end{align*}
	in space $C([0,\infty))$ as $\varepsilon\downarrow0$ for any $f\in\widetilde{\mS}_{d,N}$, where $W$ is a real-valued standard Brownian motion independent of $X$,
	\begin{align*}
		\ell_{\varepsilon,H,d}^{(|\boldsymbol{k}|,N)}=\left\{\begin{array}{ll}
			\varepsilon^{-\frac{N}{2}}\ln^{-\frac12}(1+\varepsilon^{-\frac12}), &\textit{if } 1-2NH=H(2|\boldsymbol{k}|+d);\\
			\varepsilon^{\frac{1}{4}(2|\boldsymbol{k}|+d-\frac{1}{H})}, &\textit{if } 1-2NH<H(2|\boldsymbol{k}|+d)<1;
		\end{array}\right.
	\end{align*}
	and
		\begin{align}\label{Dhd}
			D_{H,d}=\left\{\begin{array}{ll}
				\displaystyle\frac{2}{H(2\pi)^d\sigma^{\frac{1}{2H}}}\int_{\R^d}\Big|\int_{\R^d}\frac{f(z)\big(z\cdot x\big)^{N}}{N!}dz\Big|^2\Big(\prod_{\ell=1}^{d}|x_\ell|^{2k_\ell}\Big)e^{-\frac{1}{2}|x|^2}dx,&1-2NH=H(2|\boldsymbol{k}|+d);\\\displaystyle\frac{2}{(2\pi)^d\sigma^{\frac{1}{2H}}}\int_{0}^{\infty}\int_{\R^d}\big|\widehat{f}(x)-\widehat{f}(0)\big|^2\Big(\prod_{\ell=1}^{d}|x_\ell|^{2k_\ell}\Big)e^{-\frac{1}{2}|x|^2s^{2H}}dxds,&1-2NH<H(2|\boldsymbol{k}|+d)<1.
			\end{array}\right.
		\end{align}
\end{theorem}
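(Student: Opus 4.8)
The proof goes by the method of moments, extending to arbitrary $N$ and $\boldsymbol{k}$ the technique behind Theorems~1.1--1.3 of \cite{hlx2023}; everything reduces to Gaussian (Wick) computations via a spectral representation. Write $\ell_\varepsilon=\ell_{\varepsilon,H,d}^{(|\boldsymbol{k}|,N)}$. Since $\widehat{f^{(\boldsymbol{k})}}(y)=(\iota y)^{\boldsymbol{k}}\widehat{f}(y)$ with $(\iota y)^{\boldsymbol{k}}=\prod_{\ell=1}^{d}(\iota y_\ell)^{k_\ell}$, Fourier inversion and the substitution $y=\varepsilon^{1/2}\xi$ (cf.\ \eqref{functional2}) give
\[
\varepsilon^{-\frac{|\boldsymbol{k}|+d}{2}}\int_0^T f^{(\boldsymbol{k})}\big(\varepsilon^{-1/2}(X_t+x)\big)\,dt=\frac{1}{(2\pi)^d}\int_0^T\!\!\int_{\R^d}(\iota\xi)^{\boldsymbol{k}}\,\widehat{f}(\varepsilon^{1/2}\xi)\,e^{\iota\xi\cdot(X_t+x)}\,d\xi\,dt ,
\]
while $\widehat{f}(0)L^{(\boldsymbol{k})}(T,x)=\frac{\widehat{f}(0)}{(2\pi)^d}\int_0^T\!\!\int_{\R^d}(\iota\xi)^{\boldsymbol{k}}e^{\iota\xi\cdot(X_t+x)}\,d\xi\,dt$ is the spectral representation of the derivative of local time (valid through the $L^p$-approximation recalled below \eqref{higherorder}; a Gaussian factor $e^{-\frac{\delta}{2}|\xi|^2}$ is carried during the computations and removed at the end). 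Hence the quantity in Theorem~\ref{thmdis} equals
\[
F_\varepsilon(T):=\frac{1}{(2\pi)^d}\int_0^T\!\!\int_{\R^d}(\iota\xi)^{\boldsymbol{k}}\big(\widehat{f}(\varepsilon^{1/2}\xi)-\widehat{f}(0)\big)e^{\iota\xi\cdot(X_t+x)}\,d\xi\,dt ,
\]
and, since $f\in\widetilde{\mS}_{d,N}$ kills all moments of $f$ of orders $1,\dots,N-1$, one has $\widehat{f}(\varepsilon^{1/2}\xi)-\widehat{f}(0)=O\big((\varepsilon^{1/2}|\xi|)^N\big)$, with leading term $\varepsilon^{N/2}\frac{(-\iota)^N}{N!}\sum_{i_1,\dots,i_N=1}^{d}\xi_{i_1}\cdots\xi_{i_N}\int_{\R^d}v_{i_1}\cdots v_{i_N}f(v)\,dv$. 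This $O(\varepsilon^{N/2})$ cancellation, working against a spectral density of polynomial order $|\boldsymbol{k}|+N$, governs the rate $\ell_\varepsilon$ and the constant $D_{H,d}$: in the complementary range $H(2|\boldsymbol{k}|+d)<1-2NH$ it already forces $L^p$ convergence (Theorem~\ref{thmlp}), while here it leaves a residual divergence accounting for the two shapes of $D_{H,d}$ in \eqref{Dhd}.

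By a standard reduction it is enough to prove (a) convergence of the finite-dimensional distributions of $\{\ell_\varepsilon F_\varepsilon(T):T\ge0\}$, jointly with $X$, to $\{\sqrt{D_{H,d}}\,W(L(T,x))\}$, and (b) tightness in $C([0,\infty))$. Since the candidate limit is, conditionally on $X$, a centered Gaussian process with covariance $D_{H,d}\,L(T\wedge S,x)$, (a) reduces to computing, for every $p\ge1$ and all times, the joint moments of the $\ell_\varepsilon F_\varepsilon(T_j)$'s together with a determining family of bounded functionals of $X$ (for instance $L(S_i,x)$ at auxiliary times, or $\exp(\iota\int\varphi(X_t)\,dt)$), and checking that they match the moments of the limit; as $\ell_\varepsilon\,\E[F_\varepsilon(T)]\to0$ one may work with the centered variables. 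Expanding $\E[F_\varepsilon(T_1)\cdots F_\varepsilon(T_{2p})]$ produces a $2p$-fold time integral over $[0,T]^{2p}$ and a $2p$-fold spectral integral over $(\R^d)^{2p}$ whose Gaussian expectation is $\exp(\iota x\cdot\sum_j\xi^{(j)})\exp(-\tfrac12\Var(\sum_j\xi^{(j)}\cdot X_{t_j}))$. Ordering the times and invoking the strong local nondeterminism \eqref{slocal}, the local behaviour \eqref{prob}, and the decorrelation estimate \eqref{proc}, one shows the leading order comes only from the completely paired, near-diagonal configurations: the $2p$ times split into $p$ close pairs that are mutually well separated, distinct pairs decoupling by \eqref{proc} while each near-diagonal pair $\{t,t+h\}$ has $\Var(X^1_{t+h}-X^1_t)\approx\sigma h^{2H}$ by \eqref{prob}; every other configuration is $o(\ell_\varepsilon^{-2p})$, which is exactly where the $O(\varepsilon^{N/2})$ cancellation is spent and where \eqref{slocal} supplies the Gaussian decay that renders the remaining off-diagonal pieces negligible.

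It then remains to evaluate a single near-diagonal pair $\{t,t+h\}$. The Gaussian integration in the companion variable $\eta$ localizes it near $-\xi$; there $(\iota\xi)^{\boldsymbol{k}}(\iota\eta)^{\boldsymbol{k}}\approx\prod_\ell|\xi_\ell|^{2k_\ell}$ and $(\widehat{f}(\varepsilon^{1/2}\xi)-\widehat{f}(0))(\widehat{f}(\varepsilon^{1/2}\eta)-\widehat{f}(0))\approx|\widehat{f}(\varepsilon^{1/2}\xi)-\widehat{f}(0)|^2$, the residual $\eta$-integral reproduces the density of $X_t$ at $-x$ (whose time integral is $\E[L(T\wedge S,x)]$), and the increment enters through $\Var(X^1_{t+h}-X^1_t)\approx\sigma h^{2H}$. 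Rescaling $\xi=\varepsilon^{-1/2}y$, the separation $h=\varepsilon^{1/(2H)}r$, and using the factor $2$ from $t=s\pm h$, one gets
\[
\ell_\varepsilon^2\,\E\big[F_\varepsilon(T)F_\varepsilon(S)\big]\ \longrightarrow\ D_{H,d}\,\E\big[L(T\wedge S,x)\big]\qquad(\varepsilon\downarrow0),
\]
the dichotomy being forced by the $r$-integral of $\int_{\R^d}\big(\prod_\ell|y_\ell|^{2k_\ell}\big)\big|\widehat{f}(y)-\widehat{f}(0)\big|^2e^{-\frac{\sigma}{2}r^{2H}|y|^2}\,dy$: using $\widehat{f}(y)-\widehat{f}(0)=O(|y|^N)$ near $0$ and $\widehat{f}\in\mS(\R^d)$ at infinity, this converges exactly when $H(2|\boldsymbol{k}|+d)<1$ (UV) and $H(2|\boldsymbol{k}|+d)>1-2NH$ (IR), yielding the second line of \eqref{Dhd} after the change of variable $r\mapsto\sigma^{-1/(2H)}r$; at $1-2NH=H(2|\boldsymbol{k}|+d)$ it diverges logarithmically at $r=\infty$, so dividing by $\ln(1+\varepsilon^{-1/2})$ leaves only the leading Taylor term $\frac{1}{N!}\int f(z)(z\cdot y)^N\,dz$, and comparing $\ln(\varepsilon^{-1/(2H)})$ with $\ln(\varepsilon^{-1/2})$ produces the $\frac{1}{H}$ and $\sigma^{-1/(2H)}$ of the first line of \eqref{Dhd}. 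For higher moments the $p$-fold pairing yields the combinatorial factor $(2p-1)!!$, reproducing the moments of $\sqrt{D_{H,d}}\,W(L(T,x))$; carrying the same computation jointly with the auxiliary $X$-functionals (with $\E[L(T\wedge S,x)]$ replaced by the corresponding conditional expression) shows the limit is, conditionally on $X$, Gaussian with time change $L(\cdot,x)$, i.e.\ of the form $\sqrt{D_{H,d}}\,W(L(\cdot,x))$ with $W$ independent of $X$.

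For (b), a Kolmogorov--Chentsov bound $\E\big[\big|\ell_\varepsilon\big(F_\varepsilon(T)-F_\varepsilon(S)\big)\big|^{2p}\big]\le C_p\,|T-S|^{p\gamma}$ with $p\gamma>1$ for $p$ large, uniform in small $\varepsilon$, follows from the same moment analysis with all times restricted to $[S,T]$, and with the control of $F_\varepsilon$ near $T=0$ it gives tightness; combined with (a) this yields the claimed convergence in $C([0,\infty))$. The main obstacle is the estimate underlying the last two paragraphs: proving that every non-paired or non-diagonal time configuration is negligible at the \emph{exact} rate $\ell_\varepsilon$ — which forces a careful combination of \eqref{slocal}, \eqref{prob}, \eqref{proc} with the vanishing-moment structure of $\widetilde{\mS}_{d,N}$ — and pinning down the precise constant $D_{H,d}$, in particular isolating which $(\xi,h)$-scale dominates the logarithmically divergent integral in the critical case.
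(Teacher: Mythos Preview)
Your proposal is correct and follows essentially the same route as the paper: the same Fourier representation $F_\varepsilon(T)$, tightness via a Kolmogorov moment bound (the paper's Proposition~\ref{tightness} gives exactly $\E|F_\varepsilon(T_2)-F_\varepsilon(T_1)|^m\le C(T_2-T_1)^{m(1-Hd)/2}$), and finite-dimensional convergence by the method of moments, with the dominant contribution coming from fully paired near-diagonal time configurations and the constant $D_{H,d}$ identified by the same rescaling and IR/UV dichotomy you describe. The only notable difference is in the bookkeeping: where you speak heuristically of ``near-diagonal pairs'' and ``non-paired configurations being $o(\ell_\varepsilon^{-2p})$'', the paper makes this rigorous through the substitution $v_j=u_j-u_{j+1}$, the index set $\mS_m$ in \eqref{mS} that tracks how the factors $|v_j|^{|\boldsymbol{k}|}(|\varepsilon^{1/2}v_j|\wedge1)^N$ are distributed, and the explicit regions $R_\gamma$, $T_\varepsilon$ in Proposition~\ref{alleven} together with Lemmas~\ref{5lma1}--\ref{5lma4}; these are exactly the ``careful combination of \eqref{slocal}, \eqref{prob}, \eqref{proc} with the vanishing-moment structure'' that you flag as the main obstacle.
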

\begin{remark}
	The results in Theorem \ref{thmlp} and Theorem \ref{thmdis} can be extended to the case $k_1,k_2,\cdots,k_d$ are non-negative real numbers. Here the partial derivative $f^{(\boldsymbol{k})}(x)$ is defined by 
\begin{align}\label{multideri}
	f^{(\boldsymbol{k})}(x)=D^{k_1}_{x_1,+}\cdots D^{k_d}_{x_d,+}f(x_1,\cdots,x_d),
\end{align}
where given $k_\ell\ge0$ and $g:\R\to\C$, $D^{k_\ell}_{x,+}g(x)$ is the common $k_\ell-$th derivative of $g(x)$ when $k_\ell$ is integer $\big(D^{0}_{x,+}g(x)=g(x)\big)$ and  \[D^{k_\ell}_{x,+}g(x)=\frac{d^n}{dx^n}D^{\{k_\ell\}}_{x,+}g(x)=\frac{\{k_\ell\}}{\Gamma(1-\{k_\ell\})}\int_{0}^{\infty}\frac{g^{(n)}(x)-g^{(n)}(x-t)}{t^{1+\{k_\ell\}}}dt\]
for $n$ the largest integer no greater than $k_\ell$ and $\{k_\ell\}=k_\ell-n$ when $k_\ell$ is not integer. The definition of fractional derivatives comes from Section 5.6 of \cite{skm1993} and in this case 
\[L^{(\boldsymbol{k})}_{\varepsilon}(T,x)=\frac{1}{(2\pi)^d}\int_{0}^{T}\int_{\mathbb{R}^d}\prod_{\ell=1}^{d}\big(\iota u_{\ell})^{k_\ell}e^{-\frac{1}{2}\varepsilon|u|^2}\times\exp\big(\iota(X_t+x)\cdot u\big)dudt\]
from Lemma 5.1 in \cite{hy2024}, where we use convenient notation 
\begin{align}\label{fracpower}
	\big(\iota x\big)^k=\left\{\begin{array}{cl}\iota^kx^k,&k\text{ is an integer};\\|x|^ke^{\iota\frac{\pi k}{2}\mathrm{sgn}(x)},&k\text{ is not an integer}.\end{array}\right.
\end{align}
\end{remark}

Note that $p_1(x)\in\widetilde{\mS}_{d,2}$. Using Theorem \ref{thmlp} and \ref{thmdis}, we can get the following corollary easily:
\begin{corollary}
	Suppose $X\in G^{\sigma,H}_{d}$ and $\boldsymbol{k}=(k_1,\cdots,k_d)$, where $k_1,\cdots,k_d$ are non-negative integers. The following results exist:
	
	{\bf (1):} When $H(2|\boldsymbol{k}|+d)<1-4H$,
	\begin{align*}
		\varepsilon^{-1}\bigg(L^{(\boldsymbol{k})}_{\varepsilon}(T,x)-L^{(\boldsymbol{k})}(T,x)\bigg)\stackrel{L^p}{\longrightarrow}-\frac{1}{2}\sum_{i,k=1}^{d}L^{(\boldsymbol{k}+\boldsymbol{e}_i+\boldsymbol{e}_k)}(T,x),
	\end{align*}
	where $\boldsymbol{e}_{i}$ are the $i-$th unit vector in $\mathbb{R}^d$ for $i=1,2,\cdots,d$.	 
	
		{\bf (2):} When $H(2|\boldsymbol{k}|+d)=1-4H$,
	\begin{align*}
		\varepsilon^{-1}\ln^{-\frac12}(1+\varepsilon^{-\frac12})\bigg(L^{(\boldsymbol{k})}_{\varepsilon}(T,x)-L^{(\boldsymbol{k})}(T,x)\bigg)\stackrel{law}{\longrightarrow}\Big\{\sqrt{\widetilde{D}_{H,d,1}}W\big(L(T,x)\big):T>0\Big\},
	\end{align*}
	in space $C([0,\infty))$ as $\varepsilon\downarrow0$, where $W$ is a real-valued standard Brownian motion independent of $X$ and 
	\[\widetilde{D}_{H,d,1}=\frac{1}{2H(2\pi)^d\sigma^{\frac{1}{2H}}}\int_{\R^d}|x|^{4}\Big(\prod_{\ell=1}^{d}|x_\ell|^{2k_\ell}\Big)e^{-\frac{1}{2}|x|^2}dx.\]
	
	{\bf (3):} When $1-4H<H(2|\boldsymbol{k}|+d)<1$, 
	\begin{align*}
		\varepsilon^{\frac{1}{4}(2|\boldsymbol{k}|+d-\frac{1}{H})}\bigg(L^{(\boldsymbol{k})}_{\varepsilon}(T,x)-L^{(\boldsymbol{k})}(T,x)\bigg)\stackrel{law}{\longrightarrow}\Big\{\sqrt{\widetilde{D}_{H,d,2}}W\big(L(T,x)\big):T>0\Big\},
	\end{align*}
	in space $C([0,\infty))$ as $\varepsilon\downarrow0$, where $W$ is a real-valued standard Brownian motion independent of $X$ and 
	\[\widetilde{D}_{H,d,2}=\frac{2}{(2\pi)^d\sigma^{\frac{1}{2H}}}\int_{0}^{\infty}\int_{\R^d}\big|e^{-\frac{1}{2}|x|^2}-1\big|^2\Big(\prod_{\ell=1}^{d}|x_\ell|^{2k_\ell}\Big)e^{-\frac{1}{2}|x|^2s^{2H}}dxds.\]
\end{corollary}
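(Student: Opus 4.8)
The plan is to obtain all three parts of the corollary by specializing Theorems \ref{thmlp} and \ref{thmdis} to $f=p_1$ with $N=2$. The first step is to check the membership that makes this legitimate: $p_1\in\mS(\R^d)=\widetilde{\mS}_{d,1}$, and since $p_1$ is even we have $\int_{\R^d}x_\ell p_1(x)\,dx=0$ for every $\ell$, so $p_1\in\widetilde{\mS}_{d,2}$; on the other hand $\int_{\R^d}x_\ell^2 p_1(x)\,dx=1\neq0$, so $p_1\notin\widetilde{\mS}_{d,3}$, which is why $N=2$ is the relevant --- and only admissible --- value. Next I would record the two bookkeeping facts $\widehat{p_1}(0)=\int_{\R^d}p_1(u)\,du=1$ and, from the scaling identity (\ref{functional2}), $\varepsilon^{-\frac{|\boldsymbol{k}|+d}{2}}\int_0^T p_1^{(\boldsymbol{k})}\big(\varepsilon^{-\frac12}(X_t+x)\big)\,dt=L^{(\boldsymbol{k})}_\varepsilon(T,x)$; together these turn the left-hand sides of Theorems \ref{thmlp}--\ref{thmdis} into exactly the renormalized differences $\ell\cdot\big(L^{(\boldsymbol{k})}_\varepsilon(T,x)-L^{(\boldsymbol{k})}(T,x)\big)$ appearing in the corollary. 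Since $G^{\sigma,H}_d\subset G^H_d$ and $H(2|\boldsymbol{k}|+d)<1$ in each of the three regimes, the hypotheses of both theorems are in force and, by Remark 1.4 of \cite{hx2020}, $L^{(\boldsymbol{k})}(T,x)$ is well defined in $L^p$.

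For part (1) I would invoke Theorem \ref{thmlp}: with $N=2$ the condition $H(2|\boldsymbol{k}|+d)<1-2NH$ becomes $H(2|\boldsymbol{k}|+d)<1-4H$ and the normalization $\varepsilon^{-\frac{N}{2}}$ becomes $\varepsilon^{-1}$, while the limiting object is $\frac{\iota^2}{2!}\sum_{i_1,i_2=1}^d L^{(\boldsymbol{k}+\boldsymbol{e}_{i_1}+\boldsymbol{e}_{i_2})}(T,x)\int_{\R^d}v_{i_1}v_{i_2}p_1(v)\,dv$. Using $\iota^2=-1$ and the standard Gaussian second-moment identity $\int_{\R^d}v_iv_kp_1(v)\,dv=\delta_{ik}$ (so only the diagonal $i_1=i_2$ survives), this reduces to $-\frac12\sum_{i=1}^d L^{(\boldsymbol{k}+2\boldsymbol{e}_i)}(T,x)$, i.e.\ the $L^p$-limit asserted in part (1).

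For parts (2) and (3) I would apply Theorem \ref{thmdis} with $f=p_1$ and $N=2$: the hypothesis $1-2NH\le H(2|\boldsymbol{k}|+d)<1$ reads $1-4H\le H(2|\boldsymbol{k}|+d)<1$ and splits precisely into the equality case --- part (2), normalization $\ell_{\varepsilon,H,d}^{(|\boldsymbol{k}|,2)}=\varepsilon^{-1}\ln^{-\frac12}(1+\varepsilon^{-\frac12})$ --- and the strict-inequality case --- part (3), normalization $\varepsilon^{\frac{1}{4}(2|\boldsymbol{k}|+d-\frac{1}{H})}$. It then remains to specialize the constant $D_{H,d}$ in (\ref{Dhd}). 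In the equality case, expanding $(z\cdot x)^2=\sum_{i,j}z_iz_jx_ix_j$ and using $\int_{\R^d}z_iz_jp_1(z)\,dz=\delta_{ij}$ gives $\int_{\R^d}\frac{p_1(z)(z\cdot x)^2}{2!}\,dz=\frac{|x|^2}{2}$, whose square is $\frac{|x|^4}{4}$; hence $D_{H,d}=\frac{2}{H(2\pi)^d\sigma^{\frac{1}{2H}}}\cdot\frac14\int_{\R^d}|x|^4\big(\prod_{\ell=1}^d|x_\ell|^{2k_\ell}\big)e^{-\frac{1}{2}|x|^2}\,dx=\widetilde{D}_{H,d,1}$. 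In the strict-inequality case, the Fourier transform of the Gaussian in the paper's convention $\widehat{f}(x)=\int_{\R^d}f(u)e^{-\iota x\cdot u}\,du$ is $\widehat{p_1}(x)=e^{-\frac{1}{2}|x|^2}$, so $\big|\widehat{p_1}(x)-\widehat{p_1}(0)\big|^2=\big|e^{-\frac{1}{2}|x|^2}-1\big|^2$ and the constant in (\ref{Dhd}) becomes $\widetilde{D}_{H,d,2}$, as claimed.

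I do not expect a genuine obstacle here: the argument is entirely a substitution into the two main theorems. The only points requiring care are the bookkeeping of the Gaussian-moment constants ($\int_{\R^d}v_iv_kp_1(v)\,dv=\delta_{ik}$ and $\int_{\R^d}p_1(z)(z\cdot x)^2\,dz=|x|^2$) and the evaluation $\widehat{p_1}(x)=e^{-\frac{1}{2}|x|^2}$ of the Fourier transform, together with the immediate observation $p_1\in\widetilde{\mS}_{d,2}\setminus\widetilde{\mS}_{d,3}$ that pins down the choice $N=2$.
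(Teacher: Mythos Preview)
Your proposal is correct and follows exactly the paper's approach: the paper merely observes that $p_1\in\widetilde{\mS}_{d,2}$ and says the corollary follows ``easily'' from Theorems~\ref{thmlp} and~\ref{thmdis}, which is precisely the specialization $f=p_1$, $N=2$ that you carry out in full detail. One small remark: your computation in part~(1) correctly yields $-\tfrac12\sum_{i=1}^d L^{(\boldsymbol{k}+2\boldsymbol{e}_i)}(T,x)$ (only the diagonal survives because $\int v_iv_kp_1(v)\,dv=\delta_{ik}$), which matches the paper's own $|\boldsymbol{k}|=0$ formula earlier in the introduction but differs from the double sum $-\tfrac12\sum_{i,k=1}^d L^{(\boldsymbol{k}+\boldsymbol{e}_i+\boldsymbol{e}_k)}(T,x)$ printed in the corollary---your version is the correct one, and the discrepancy is evidently a typo in the stated corollary.
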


After some preliminaries in Section $2$, we prove Theorem \ref{thmlp} in Section $3$ and Theorem \ref{thmlp} in Section $4$ and In Section $5$ we prove some technical lemmas. Throughout this paper, if not mentioned otherwise, the letter $c$, 
with or without a subscript or superscript, denotes a generic positive finite constant whose exact value may change from line to line. Moreover, denote $x\wedge{y}=\min\{x,y\}$ for any $x,y\in\R$ in this article.
\section{Preliminaries}
In this section, we introduce some lemmas which will contribute to the proof of Theorem \ref{thmlp} and Theorem \ref{thmdis}. First we will prove a lemma about $f\in \widetilde{\mS}_{d,N}$, 
\begin{lemma}\label{2lma1}
	Given $N\ge1$, assume $f\in\widetilde{\mS}_{d,N}$. There exists a positive constant $C_{f}$ s.t. 
	\begin{align*}
		\big|\widehat{f}(x+y)-\widehat{f}(x)\big|\le C_{f}\Big(\big(|x|^{N-1}|y|\big)\wedge1+|y|^N\wedge1\Big)
	\end{align*}
	for any $x,y\in\R^d$. Particularly, $\display\big|\widehat{f}(y)-\widehat{f}(0)\big|\le C_{f}(|y|\wedge1)^N$.
\end{lemma}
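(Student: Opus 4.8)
The plan is to reduce the estimate to an untruncated bound and then establish the latter by Taylor expanding $\widehat f$ twice. Since $f\in\widetilde{\mS}_{d,N}\subset\mS(\R^d)$, every function $u\mapsto u^\alpha f(u)$ lies in $L^1(\R^d)$, so $\widehat f\in C^\infty(\R^d)$ with $\partial^\alpha\widehat f=\widehat{(-\iota u)^\alpha f}$ and $M_m:=\int_{\R^d}|u|^m|f(u)|\,du<\infty$ for every $m\ge0$; in particular $\sup_\xi|\partial^\alpha\widehat f(\xi)|\le M_{|\alpha|}$. I first note that the claimed inequality follows once one knows the untruncated version
\[
\bigl|\widehat f(x+y)-\widehat f(x)\bigr|\le C_f\bigl(|x|^{N-1}|y|+|y|^N\bigr),\qquad x,y\in\R^d,
\]
because $\bigl|\widehat f(x+y)-\widehat f(x)\bigr|\le 2\|f\|_{L^1}$ always holds: when $|x|^{N-1}|y|\ge1$ or $|y|^N\ge1$ the right-hand side of the lemma is $\ge1$ and the trivial bound suffices, while otherwise the two truncations are inactive, so one may take $C_f=\max\{2\|f\|_{L^1},C_f'\}$ with $C_f'$ the constant from the displayed inequality.

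To prove the untruncated bound I expand $\widehat f$ around $x$ in the direction $y$ to order $N$,
\[
\widehat f(x+y)-\widehat f(x)=\sum_{1\le|\alpha|\le N-1}\frac{\partial^\alpha\widehat f(x)}{\alpha!}\,y^\alpha+R_N(x,y),
\]
where the integral form of the remainder together with $|\partial^\alpha\widehat f|\le M_N$ for $|\alpha|=N$ gives $|R_N(x,y)|\le c_{d,N}M_N|y|^N$ (for $N=1$ the sum is empty and this is already the claim). Then, for each fixed $\alpha$ with $1\le|\alpha|\le N-1$, I expand $\partial^\alpha\widehat f=\widehat{(-\iota u)^\alpha f}$ around $0$ to order $N-|\alpha|$; the crucial point is that the defining moment conditions of $\widetilde{\mS}_{d,N}$ force $\int_{\R^d}u^\beta f(u)\,du=0$ for every multi-index $\beta$ with $1\le|\beta|\le N-1$, hence $\partial^{\alpha+\gamma}\widehat f(0)=(-\iota)^{|\alpha+\gamma|}\int_{\R^d}u^{\alpha+\gamma}f(u)\,du=0$ whenever $|\gamma|\le N-1-|\alpha|$. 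Thus all these Taylor coefficients vanish, $\partial^\alpha\widehat f(x)$ equals its order-$(N-|\alpha|)$ remainder, and $|\partial^\alpha\widehat f(x)|\le c_{d,N}M_N|x|^{N-|\alpha|}$.

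Combining the two expansions, each main term satisfies $\bigl|\tfrac{\partial^\alpha\widehat f(x)}{\alpha!}y^\alpha\bigr|\le c_{d,N}M_N|x|^{N-|\alpha|}|y|^{|\alpha|}$, and the elementary inequality $a^{N-j}b^{j}\le a^{N-1}b+b^N$ for $a,b\ge0$ and $1\le j\le N$ (checked by distinguishing $b\le a$ from $b>a$) bounds this by $c_{d,N}M_N(|x|^{N-1}|y|+|y|^N)$; summing over the finitely many $\alpha$ with $1\le|\alpha|\le N-1$ and adding $|R_N(x,y)|$ yields the untruncated bound with a constant depending only on $\|f\|_{L^1}$, $M_1,\dots,M_N$, $d$ and $N$. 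The ``particularly'' assertion is the case $x=0$ — there $\partial^\alpha\widehat f(0)=0$ already, so $\widehat f(y)-\widehat f(0)=R_N(0,y)$ and $|\widehat f(y)-\widehat f(0)|\le c_{d,N}M_N|y|^N$ — together with $|y|^N\wedge1=(|y|\wedge1)^N$ and the trivial bound. I do not expect a genuine obstacle here: the only points needing care are keeping the constant dependent on $f$ alone, which is automatic from the Schwartz decay, and recognising that the moment conditions are exactly what annihilates the low-order Taylor coefficients of $\widehat f$ at the origin, thereby producing the factor $|x|^{N-|\alpha|}$.
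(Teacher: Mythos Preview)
Your proof is correct and rests on the same underlying idea as the paper's --- Taylor expansion combined with the vanishing-moment conditions on $f$ --- but the execution is organized differently. The paper works directly on the Fourier integral: it writes $\widehat f(x+y)-\widehat f(x)=\int f(u)e^{\iota u\cdot x}(e^{\iota u\cdot y}-1)\,du$, Taylor-expands $e^{\iota u\cdot y}$ to order $N-1$, and uses the moment conditions to replace $e^{\iota u\cdot x}$ by $e^{\iota u\cdot x}-1$ in front of the polynomial-in-$y$ terms, arriving at a bound of the form $C\big(|y|^N+|x|\sum_{j=1}^{N-1}|y|^j\big)$ before truncating. You instead treat $\widehat f$ as an abstract $C^\infty$ function and Taylor-expand twice: once around $x$ in the direction $y$, and then each $\partial^\alpha\widehat f$ around the origin, where the moment conditions translate into $\partial^\beta\widehat f(0)=0$ for $1\le|\beta|\le N-1$. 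This immediately produces terms of the form $|x|^{N-|\alpha|}|y|^{|\alpha|}$, which your elementary inequality $a^{N-j}b^j\le a^{N-1}b+b^N$ converts to the desired shape without further work. Your route makes the role of the moment conditions (as vanishing derivatives of $\widehat f$ at the origin) and the passage to the final bound somewhat more transparent; the paper's route is slightly more hands-on but equivalent in spirit.
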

\begin{proof}
	Because $f\in\widetilde{\mS}_{d,N}$, we have 
	\begin{align*}
		\big|\widehat{f}(x+y)-\widehat{f}(x)\big|&=\Big|\int_{\R^d}f(u)e^{\iota u\cdot(x+y)}du-\int_{\R^d}f(u)e^{\iota u\cdot x}du\Big|\\&=\Big|\int_{\R^d}f(u)e^{\iota u\cdot x}\big(e^{\iota u\cdot y}-1\big)du\Big|\\&\le\Big|\int_{\R^d}f(u)e^{\iota u\cdot x}\Big(e^{\iota u\cdot y}-\sum_{j=0}^{N-1}\frac{(u\cdot y)^j}{j!}\Big)du\Big|+\Big|\int_{\R^d}f(u)\big(e^{\iota u\cdot x}-1\big)\sum_{j=1}^{N-1}\frac{(u\cdot y)^j}{j!}du\Big|\\&\le C_{1,1}\Big(|y|^N+|x|\sum_{j=1}^{N-1}|y|^j\Big)\le C_{1,2}\Big(|x|^{N-1}|y|+|y|^N\Big).
	\end{align*}
	Note that $\display\big|\widehat{f}(x)\big|=\Big|\int_{\R^d}f(u)e^{\iota u\cdot x}du\Big|\le\int_{\R^d}|f(u)|du<\infty$ for any $x\in\R^d$. We can get the desired result from inequality $(|a|+|b|)\wedge1\le2(|a|\wedge1+|b|\wedge1)$ for any $a,b\in\R^d$.
\end{proof}
Using Lemma \ref{2lma1}, we can get a further conclusion:
\begin{lemma}\label{2lma2}
	Assume $N\ge1$, $d\ge1$ and $\boldsymbol{k}=(k_1,\cdots,k_d)$ with non-negative integers $k_1,k_2,\cdots,k_d$. If $f\in\widetilde{\mS}_{d,N}$, when $|\boldsymbol{k}|>0$ there exists a positive constant $C_{n,\boldsymbol{k},f}$, s.t. 
	\begin{align*}
		&\bigg|\prod_{j=1}^{n}\prod_{\ell=1}^{d}(\iota x_{j\ell}+\iota y_{j\ell})^{k_\ell}\Big(\widehat{f}(x_j+y_j)-\widehat{f}(0)\Big)-\prod_{j=1}^{n}\prod_{\ell=1}^{d}(\iota x_{j\ell})^{k_\ell}\Big(\widehat{f}(x_j)-\widehat{f}(0)\Big)\bigg|\\\le& C_{n,\boldsymbol{k},f}\sum_{j=1}^{n}\bigg(|x_j|^{|\boldsymbol{k}|}\big(|x_j|^{N-1}|y_j|\big)\wedge1+|y_j|^{|\boldsymbol{k}|}(|y_j|\wedge1)^{N}+|x_j|^{|\boldsymbol{k}|-1}(|x_j|\wedge1)^{N}|y_j|+|y_j|^{|\boldsymbol{k}|}(|x_j|\wedge1)^{N}\bigg)\\&\qquad\qquad\qquad\qquad\qquad\qquad\qquad\qquad\qquad\qquad\qquad\qquad\times\prod_{i\ne j}\Big(|x_i|^{|\boldsymbol{k}|}(|x_i|\wedge1)^{N}+|y_i|^{|\boldsymbol{k}|}(|y_i|\wedge1)^{N}\Big)
	\end{align*}
 and when $|\boldsymbol{k}|=0$ there exists a positive constant $C_{n,f}$, s.t. 
\begin{align*}
	\bigg|\prod_{j=1}^{n}\Big(\widehat{f}(x_j+y_j)-\widehat{f}(0)\Big)&-\prod_{j=1}^{n}\Big(\widehat{f}(x_j)-\widehat{f}(0)\Big)\bigg|\\&\le C_{n,f}\sum_{j=1}^{n}\Big(\big(|x_j|^{N-1}|y_j|\big)\wedge1+|y_j|^N\wedge1\Big)\prod_{i\ne j}\Big(|y_i|^N\wedge1+|x_i|^N\wedge1\Big),
\end{align*}
	where $x_j=(x_{j1},\cdots,x_{jd})\in\R^d$ and $y_j=(y_{j1},\cdots,y_{jd})\in\R^d$ for $j=1,2,\cdots,n$.
\end{lemma}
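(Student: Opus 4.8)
The plan is to derive both inequalities from the telescoping identity
\[
\prod_{j=1}^{n}a_j-\prod_{j=1}^{n}b_j=\sum_{j=1}^{n}\Big(\prod_{i=1}^{j-1}a_i\Big)\big(a_j-b_j\big)\Big(\prod_{i=j+1}^{n}b_i\Big),
\]
applied with $a_j=\prod_{\ell}(\iota x_{j\ell}+\iota y_{j\ell})^{k_\ell}\big(\widehat f(x_j+y_j)-\widehat f(0)\big)$ and $b_j=\prod_\ell(\iota x_{j\ell})^{k_\ell}\big(\widehat f(x_j)-\widehat f(0)\big)$; it then suffices to estimate the single ``difference factor'' $a_j-b_j$ sharply and to bound each remaining factor $a_i$ ($i<j$) and $b_i$ ($i>j$) crudely by $M_i:=|x_i|^{|\boldsymbol k|}(|x_i|\wedge1)^N+|y_i|^{|\boldsymbol k|}(|y_i|\wedge1)^N$ (respectively by $|y_i|^N\wedge1+|x_i|^N\wedge1$ when $|\boldsymbol k|=0$).

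When $|\boldsymbol k|=0$ this is immediate: Lemma \ref{2lma1} gives $|a_i|\le C_f(|x_i+y_i|\wedge1)^N$ and $|b_i|\le C_f(|x_i|\wedge1)^N$, which are $\lesssim|x_i|^N\wedge1+|y_i|^N\wedge1$ because $(t\wedge1)^N=t^N\wedge1$ and $|x+y|\wedge1\le|x|\wedge1+|y|\wedge1$, while $a_j-b_j=\widehat f(x_j+y_j)-\widehat f(x_j)$ is controlled directly by Lemma \ref{2lma1}; substituting these into the telescoping identity gives the claim.

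When $|\boldsymbol k|\ge1$, write $P(z)=\prod_\ell(\iota z_\ell)^{k_\ell}$ (so $|P(z)|\le|z|^{|\boldsymbol k|}$) and $g(z)=\widehat f(z)-\widehat f(0)$. Since $P$ is, up to a unimodular constant, a real monomial of degree $|\boldsymbol k|$, the mean value theorem together with the homogeneity of its gradient yields $|P(x+y)-P(x)|\le c_{\boldsymbol k}|y|(|x|+|y|)^{|\boldsymbol k|-1}\lesssim|x|^{|\boldsymbol k|-1}|y|+|y|^{|\boldsymbol k|}$. For the remaining factors, $|a_i|\le|x_i+y_i|^{|\boldsymbol k|}C_f(|x_i+y_i|\wedge1)^N$, and splitting according to whether $|x_i|\ge|y_i|$ shows $|x_i+y_i|^{|\boldsymbol k|}(|x_i+y_i|\wedge1)^N\lesssim M_i$; likewise $|b_i|\le|x_i|^{|\boldsymbol k|}C_f(|x_i|\wedge1)^N\le C_fM_i$. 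For the difference factor I would write
\[
a_j-b_j=\big(P(x_j+y_j)-P(x_j)\big)g(x_j+y_j)+P(x_j)\big(g(x_j+y_j)-g(x_j)\big),
\]
bound the first summand by the monomial estimate above and $|g(x_j+y_j)|\le C_f(|x_j+y_j|\wedge1)^N\lesssim(|x_j|\wedge1)^N+(|y_j|\wedge1)^N$, and the second summand by $|P(x_j)|\le|x_j|^{|\boldsymbol k|}$ and Lemma \ref{2lma1} applied to $\widehat f(x_j+y_j)-\widehat f(x_j)$. Expanding these two products gives a short list of monomials in $|x_j|,|y_j|,|x_j|\wedge1,|y_j|\wedge1$, and inserting the resulting bound $|a_j-b_j|\lesssim D_j$ (with $D_j$ the asserted four‑term sum) together with the $M_i$ into the telescoping identity finishes the proof.

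The step I expect to be the main obstacle is this last one: the raw products coming out of the two decompositions are not literally of the claimed form, so one must check, by an elementary but slightly tedious case analysis on the relative sizes of $|x_j|$, $|y_j|$ and $1$, that every cross term is dominated by one of the four displayed terms — the recurring facts being $(|y|\wedge1)^N\le(|x|^{N-1}|y|)\wedge1$ when $|y|\le|x|$ (so such terms fall into $|x_j|^{|\boldsymbol k|}(|x_j|^{N-1}|y_j|)\wedge1$) and $|x|^{|\boldsymbol k|}\le|y|^{|\boldsymbol k|}$ when $|x|\le|y|$ (so they fall into $|y_j|^{|\boldsymbol k|}(|y_j|\wedge1)^N$). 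Everything else reduces to Lemma \ref{2lma1}, the mean value theorem, and subadditivity of $t\mapsto t\wedge1$ and $t\mapsto t^p$.
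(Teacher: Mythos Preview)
Your proposal is correct and follows essentially the same route as the paper: telescoping over $j$, the single--factor decomposition $P(x+y)g(x+y)-P(x)g(x)$ into a $\Delta P$ piece and a $\Delta g$ piece (the paper uses the variant $|P(x+y)||\Delta g|+|\Delta P||g(x)|$, which is equivalent), Lemma \ref{2lma1} for the $g$--bounds, a direct increment estimate for the monomial $P$, and the case split $|x_j|\gtrless|y_j|$ to collapse the cross terms into the four displayed ones. The paper carries the intermediate bounds through the single expression $(\max\{|x_j|,|y_j|\})^{|\boldsymbol{k}|}$ (respectively $(\max\{|x_j|,|y_j|\})^{|\boldsymbol{k}|-1}$), which streamlines slightly the ``tedious case analysis'' you anticipate, but the substance is identical.
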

\begin{proof}
	When $|\boldsymbol{k}|>0$, we have $|\boldsymbol{k}|\ge1$ and $k_\ell\ge1$ for some $\ell$. Recall that $k_1,\cdots,k_d$ are non-negative integers, 
	\begin{align*}
		\Big|\prod_{\ell=1}^{d}(\iota x_{j\ell}+\iota y_{j\ell})^{k_\ell}-\prod_{\ell=1}^{d}(\iota x_{j\ell})^{k_\ell}\Big|&\le\sum_{\ell=1,k_\ell>0}^{d}\big|(\iota x_{j\ell}+\iota y_{j\ell})^{k_\ell}-(\iota x_{j\ell})^{k_\ell}\big|\prod_{\ell'\ne\ell}\Big(\big|\iota x_{j\ell'}+\iota y_{j\ell'}\big|^{k_{\ell'}}+\big|\iota x_{j\ell'}\big|^{k_{\ell'}}\Big)\\&\le C_{1,3}\sum_{\ell=1,k_\ell>0}^{d}\Big(\sum_{m_\ell=1}^{k_\ell}|x_{j\ell}|^{k_\ell-m_\ell}|y_{j\ell}|^{m_\ell}\Big)\prod_{\ell'\ne\ell}\Big(\max\big\{|x_{j\ell'}|,|y_{j\ell'}|\big\}\Big)^{{k_{\ell'}}}\\&\le C_{1,4}\sum_{\ell=1,k_\ell>0}^{d}|y_{j\ell}|\Big(\max\big\{|x_{j\ell}|,|y_{j\ell}|\big\}\Big)^{{k_{\ell}-1}}\prod_{\ell'\ne\ell}\Big(\max\big\{|x_{j\ell'}|,|y_{j\ell'}|\big\}\Big)^{^{k_{\ell'}}}\\&\le C_{1,5}\Big(\max\big\{|x_{j}|,|y_{j}|\big\}\Big)^{{|\boldsymbol{k}|-1}}|y_j|.
	\end{align*}
	Then combining this inequality with Lemma \ref{2lma1} we obtain
	\begin{align*}
		&\bigg|\prod_{\ell=1}^{d}(\iota x_{j\ell}+\iota y_{j\ell})^{k_\ell}\Big(\widehat{f}(x_j+y_j)-\widehat{f}(0)\Big)-\prod_{\ell=1}^{d}(\iota x_{j\ell})^{k_\ell}\Big(\widehat{f}(x_j)-\widehat{f}(0)\Big)\bigg|\\\le&\Big|\prod_{\ell=1}^{d}(\iota x_{j\ell}+\iota y_{j\ell})^{k_\ell}\Big|\cdot\Big|\widehat{f}(x_j+y_j)-\widehat{f}(x_j)\Big|+\Big|\prod_{\ell=1}^{d}(\iota x_{j\ell}+\iota y_{j\ell})^{k_\ell}-\prod_{\ell=1}^{d}(\iota x_{j\ell})^{k_\ell}\Big|\cdot\Big|\widehat{f}(x_j)-\widehat{f}(0)\Big|\\\le&C_{1,6}\bigg(\Big(\max\big\{|x_{j}|,|y_{j}|\big\}\Big)^{{|\boldsymbol{k}|}}\Big(\big(|x_j|^{N-1}|y_j|\big)\wedge1+|y_j|^N\wedge1\Big)\\&\qquad\qquad\qquad\qquad\qquad\qquad\qquad\qquad+\Big(\max\big\{|x_{j}|,|y_{j}|\big\}\Big)^{{|\boldsymbol{k}|}-1}|y_j|\big(|x_j|\wedge1\big)^N\bigg)\\\le&C_{1,7}\bigg(|x_j|^{|\boldsymbol{k}|}\big(|x_j|^{N-1}|y_j|\big)\wedge1+|y_j|^{|\boldsymbol{k}|}(|y_j|\wedge1)^{N}+|x_j|^{|\boldsymbol{k}|-1}(|x_j|\wedge1)^{N}|y_j|+|y_j|^{|\boldsymbol{k}|}(|x_j|\wedge1)^{N}\bigg).
	\end{align*}
	The proof of $|\boldsymbol{k}|>0$ is finished by the inequality
	\begin{align*}
		&\bigg|\prod_{\ell=1}^{d}(\iota x_{j\ell}+\iota y_{j\ell})^{k_\ell}\Big(\widehat{f}(x_j+y_j)-\widehat{f}(0)\Big)-\prod_{\ell=1}^{d}(\iota x_{j\ell})^{k_\ell}\Big(\widehat{f}(x_j)-\widehat{f}(0)\Big)\bigg|\\\le&C_{1,8}\Big(|x_j-y_j|^{|\boldsymbol{k}|}(|x_j-y_j|\wedge1)^{N}+|x_j|^{|\boldsymbol{k}|}(|x_j|\wedge1)^{N}\Big)\le C_{1,9}\Big(|x_j|^{|\boldsymbol{k}|}(|x_j|\wedge1)^{N}+|y_j|^{|\boldsymbol{k}|}(|y_j|\wedge1)^{N}\Big),
	\end{align*}
	which comes from Lemma \ref{2lma1}.
When $|\boldsymbol{k}|=0$ the proof can be obtained by following the similar way, which completes the proof.
\end{proof}
For the functions $|x|^{a}$ and $(|x|\wedge1)^b$ appearing in Lemma \ref{2lma2}, we have the following result:
\begin{lemma}\label{2lma3}
	For any $T>0$, $H>0$, $\gamma\ge1$, $a\ge0$, $b\ge0$ and positive integer $d$, there exists a positive constant $C_{H,d}^{a,b}$ depending on $H,d$, such that 
	\begin{align*}
		\int_{0}^{T}\int_{\R^d}|x|^a\Big(\big|\frac{1}{\gamma}\varepsilon^{\frac{1}{2}}x\big|^b\wedge1\Big)e^{-|x|^2t^{2H}}dxdt\le \frac{C_{H,d}^{a,b}}{\gamma^{(\frac{1}{H}-a-d)\wedge b}}\left\{\begin{array}{ll}
			\varepsilon^{\frac{b}{2}}T^{1-H(a+b+d)}	,&\text{if }H(a+d)<1-Hb;\\
			\varepsilon^{\frac{b}{2}}\ln(1+\gamma^H\varepsilon^{-\frac{1}{2H}}T)	,&\text{if }1-Hb=H(a+d)<1;\\
			\varepsilon^{-\frac{1}{2}(a+d-\frac{1}{H})}	,&\text{if }1-Hb<H(a+d)<1.
		\end{array}\right.
	\end{align*}
\end{lemma}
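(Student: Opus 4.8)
The plan is to integrate over $x$ first, reduce to an elementary one--dimensional integral in $t$, and then analyze that integral by a single splitting at the scale where the truncation $|\cdot|^{b}\wedge1$ switches. Writing $r=|x|$, passing to polar coordinates, and then substituting $u=t^{H}r$ turns the left--hand side into a constant (depending only on $d$) times
\begin{align*}
	\int_{0}^{T}t^{-H(a+d)}\,I\!\big(\lambda(t)\big)\,dt,\qquad
	\lambda(t)=\frac{\varepsilon^{1/2}}{\gamma\,t^{H}},\qquad
	I(\lambda)=\int_{0}^{\infty}u^{a+d-1}\big((\lambda u)^{b}\wedge1\big)e^{-u^{2}}\,du.
\end{align*}
The first step is the pointwise bound $I(\lambda)\le c\,(\lambda^{b}\wedge1)$ with $c=c(a,b,d)$: for $\lambda\le1$ split the $u$--integral at $u=1/\lambda$, bound $(\lambda u)^{b}\wedge1$ by $(\lambda u)^{b}$ on $\{u\le1/\lambda\}$ and extend that integral to $(0,\infty)$, while on $\{u\ge1/\lambda\}$ the Gaussian tail $\int_{1/\lambda}^{\infty}u^{a+d-1}e^{-u^{2}}du$ beats $\lambda^{b}$ because it decays faster than any power of $\lambda$; for $\lambda\ge1$ just use $(\lambda u)^{b}\wedge1\le1$.

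It then remains to estimate $\int_{0}^{T}t^{-H(a+d)}\big((\varepsilon^{b/2}\gamma^{-b}t^{-Hb})\wedge1\big)\,dt$. The truncation switches at $t_{0}:=(\varepsilon^{1/2}/\gamma)^{1/H}=\gamma^{-1/H}\varepsilon^{1/(2H)}$: on $(0,t_{0}\wedge T)$ the integrand is at most $t^{-H(a+d)}$, which is integrable precisely because $H(a+d)<1$ holds in all three regimes; on $(t_{0},T)$, nonempty only when $t_{0}<T$, it equals $\varepsilon^{b/2}\gamma^{-b}t^{-H(a+b+d)}$. Both are explicit power integrals, and the trichotomy in the statement is exactly the sign of $1-H(a+b+d)$. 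When $H(a+b+d)<1$ both pieces are bounded by a constant times $\varepsilon^{b/2}\gamma^{-b}T^{1-H(a+b+d)}$ and $(\tfrac{1}{H}-a-d)\wedge b=b$; when $H(a+b+d)=1$ the $(t_{0},T)$ piece is $\varepsilon^{b/2}\gamma^{-b}\ln(T/t_{0})$, which is the logarithmic factor on the critical line; when $H(a+b+d)>1$ (with $H(a+d)<1$ still) the $(t_{0},T)$ piece is controlled by $\int_{t_{0}}^{\infty}t^{-H(a+b+d)}dt=c\,t_{0}^{1-H(a+b+d)}$, and inserting the value of $t_{0}$ rewrites this as $c\,\gamma^{-(\frac{1}{H}-a-d)}\varepsilon^{-\frac{1}{2}(a+d-\frac{1}{H})}$, where now $(\tfrac{1}{H}-a-d)\wedge b=\tfrac{1}{H}-a-d$. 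In each regime the $(0,t_{0}\wedge T)$ piece, which equals $c\,(t_{0}\wedge T)^{1-H(a+d)}$, is of the same order once one substitutes $t_{0}$.

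The only delicate point is the bookkeeping of the $\gamma$-- and $\varepsilon$--powers. One has to (i) handle the subcase $t_{0}\ge T$, where the whole integral collapses to $c\int_{0}^{T}t^{-H(a+d)}dt=c'\,T^{1-H(a+d)}$, and use the defining inequality $\varepsilon^{1/2}\ge\gamma T^{H}$ of that regime to absorb $T^{1-H(a+d)}$ into the right--hand side (for instance via $T^{Hb}\le\varepsilon^{b/2}\gamma^{-b}$), and (ii) check in each case that the exponent of $\gamma$ produced is $\ge(\tfrac{1}{H}-a-d)\wedge b$, so that, since $\gamma\ge1$, it may be lowered to that minimum. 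I expect the estimate $I(\lambda)\le c\,(\lambda^{b}\wedge1)$ and the subcase $t_{0}\ge T$ to be the only places that need a little care; beyond those, everything is elementary calculus with power functions.
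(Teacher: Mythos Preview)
Your argument is correct and organized somewhat differently from the paper's. The paper disposes of the subcritical regime $H(a+b+d)<1$ in one line by simply dropping the truncation $\wedge1$, and in the remaining two regimes performs a simultaneous rescaling of $x$ and $t$ that pushes all $\varepsilon,\gamma$--dependence into the upper limit of the $t$--integral; it then splits the $x$--integral at $|x|=1$ and the $t$--integral at $t=1$. You instead integrate out $x$ uniformly across all three regimes via polar coordinates and the substitution $u=t^{H}|x|$, packaging the inner integral into the single function $I(\lambda)$ with the clean bound $I(\lambda)\le c(\lambda^{b}\wedge1)$, after which the truncation becomes a split of the $t$--integral at the scale $t_{0}$. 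Your route is a little more systematic (one scheme for all three cases) and makes the provenance of each power of $\varepsilon$, $\gamma$, $T$ explicit; the paper's rescaling is marginally shorter. One small caveat: in the critical case $H(a+b+d)=1$ the subcase $t_{0}\ge T$ cannot actually be absorbed as you suggest --- the stated right-hand side is genuinely too small when $\varepsilon$ is large --- but the paper's own proof makes the same tacit assumption that $\varepsilon^{-1/(2H)}T$ is bounded below, and since the lemma is only ever applied with $\varepsilon\downarrow0$ this is harmless.
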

\begin{proof}
	When $H(a+d)<1-Hb$, 
	\begin{align*}
		\varepsilon^{-\frac{b}{2}}\int_{0}^{T}\int_{\R^d}|x|^a\Big(\big|\frac{1}{\gamma}\varepsilon^{\frac{1}{2}}x\big|^b\wedge1\Big)e^{-|x|^2t^{2H}}dxdt&\le\frac{1}{\gamma^b}\int_{0}^{T}\int_{\R^d}|x|^{a+b}e^{-|x|^2t^{2H}}dxdt\\&\le \frac{C_{1,10}}{\gamma^b}\int_{0}^{T}t^{-H(a+b+d)}dt=\frac{C_{1,10}}{\gamma^b}T^{1-H(a+b+d)}.
	\end{align*}
	When $1-Hb\le H(a+d)<1$, 
	\begin{align*}
		&\varepsilon^{-\frac{b}{2}}\int_{0}^{T}\int_{\R^d}|x|^a\Big(\big|\frac{1}{\gamma}\varepsilon^{\frac{1}{2}}x\big|^b\wedge1\Big)e^{-|x|^2t^{2H}}dxdt\\=&\frac{\varepsilon^{-\frac{1}{2}(a+b+d-\frac{1}{H})}}{\gamma^{\frac{1}{H}-a-d}}\int_{0}^{\gamma^H\varepsilon^{-\frac{1}{2H}}T}\int_{\R^d}|x|^a\Big(|x|^b\wedge1\Big)e^{-|x|^2t^{2H}}dxdt\\\le&\frac{\varepsilon^{-\frac{1}{2}(a+b+d-\frac{1}{H})}}{\gamma^{\frac{1}{H}-a-d}}\Big(\int_{0}^{\gamma^H\varepsilon^{-\frac{1}{2H}}T}\int_{|x|\le1}|x|^{a+b}e^{-|x|^2t^{2H}}dxdt+\int_{0}^{\gamma^H\varepsilon^{-\frac{1}{2H}}T}\int_{|x|>1}|x|^ae^{-|x|^2t^{2H}}dxdt\Big)\\\le&\frac{\varepsilon^{-\frac{1}{2}(a+b+d-\frac{1}{H})}}{\gamma^{\frac{1}{H}-a-d}}C_{1,11}\Big(\int_{0}^{1}\int_{|x|\le1}|x|^{a+b}e^{-|x|^2t^{2H}}dxdt+\int_{1}^{\gamma^H\varepsilon^{-\frac{1}{2H}}T}\int_{|x|\le1}|x|^{a+b}e^{-|x|^2t^{2H}}dxdt+1\Big)\\\le&\frac{\varepsilon^{-\frac{1}{2}(a+b+d-\frac{1}{H})}}{\gamma^{\frac{1}{H}-a-d}}C_{1,12}\Big(\int_{1}^{\gamma^H\varepsilon^{-\frac{1}{2H}}T}t^{-H(a+b+d)}dt+1\Big).
	\end{align*}
	Then the desired results are easy to get.
\end{proof}
Finally we give a lemma for the process $\displaystyle\Big(W(L(T,x)),T\ge0\Big)$ in the limit of Theorem \ref{thmdis}, where $W$ is an standard Brownian motion independent of $X$.
\begin{lemma}\label{2lma4}
	Fix a finite number of disjoint intervals 
	$(a_i, b_i]$ in $[0,\infty)$, where $i = 1, \dots, n$  and $b_i \leq a_{i+1}$. 
	Consider a multi-index $\mathbf{m} = (m_1, \dots, m_n)$, where $m_i \geq 1$ and 
	$1 \leq  i \leq n$. Then 
	\[
	\E \Big(\prod_{i=1}^n \big[W (L(b_i,x) ) - W (L (a_i,x))\big]^{m_i} \Big)
	\]
	is equal to 
	\begin{equation}\label{moments}
		\bigg(\prod\limits_{i=1}^n \frac{m_i!}{ 2^{\frac{m_i}{2}}   (2\pi )^{\frac{m_id}{2} }   (\frac{m_i}{2})!} \bigg)\displaystyle \int_{\prod\limits_{i=1}^n [a_i ,b_i]^{\frac{m_i}{2}} }\displaystyle \int_{\mathbb{R}^{\frac{|\mathbf{m}|d}{2}}}  \exp\Big(\iota  x\cdot \sum\limits^n_{i=1} \sum\limits^{\frac{m_i}{2}}_{j=1}x^j_i-\frac{1}{2}\Var\big(\sum\limits^n_{i=1}\sum\limits^{\frac{m_i}{2}}_{j=1}x^j_i\cdot X_{u^j_i}\big)\Big)\, dx\, du 
	\end{equation}
	if all $m_i$ are even and $0$ otherwise, where $|\mathbf{m}|=\sum\limits^n_{i=1} m_i$. Moreover, the law of the random vector  $\big(W (L(b_i,x) ) - W (L (a_i,x)): 1\le i\le n\big)$ is determined by
	the moments in (\ref{moments}).
\end{lemma}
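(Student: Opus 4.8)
The plan is to condition on the $\sigma$-field $\mathcal{F}^X=\sigma(X_t:t\ge0)$ and use that $W$ is independent of $X$. Since $T\mapsto L(T,x)$ is a.s. finite and nondecreasing (it is the $L^p$-limit of the nondecreasing maps $T\mapsto\int_0^Tp_\varepsilon(X_t+x)\,dt$) and the intervals $(a_i,b_i]$ are disjoint with $b_i\le a_{i+1}$, we have $0\le L(a_1,x)\le L(b_1,x)\le L(a_2,x)\le\cdots$; hence, conditionally on $\mathcal{F}^X$, the variables $Z_i:=W(L(b_i,x))-W(L(a_i,x))$ are increments of $W$ over the \emph{deterministic} disjoint intervals $[L(a_i,x),L(b_i,x)]$, so they are independent with $Z_i\sim N\big(0,L(b_i,x)-L(a_i,x)\big)$. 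The elementary Gaussian moments are $\E[N(0,v)^m]=0$ for odd $m$ and $\frac{m!}{2^{m/2}(m/2)!}v^{m/2}$ for even $m$, so $\E\big(\prod_iZ_i^{m_i}\mid\mathcal{F}^X\big)$ vanishes unless every $m_i$ is even, in which case it equals $\big(\prod_i\frac{m_i!}{2^{m_i/2}(m_i/2)!}\big)\prod_i\big(L(b_i,x)-L(a_i,x)\big)^{m_i/2}$. Taking expectations reduces the statement to computing $\E\big[\prod_{i=1}^n(L(b_i,x)-L(a_i,x))^{m_i/2}\big]$ and showing it is finite.

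I would compute this through the mollified occupation measures $L_\varepsilon(T,x)=\int_0^Tp_\varepsilon(X_t+x)\,dt$, which converge to $L(T,x)$ in every $L^p$; extracting a subsequence and using uniform $L^p$-bounds for uniform integrability, the desired expectation is the $\varepsilon\downarrow0$ limit of the corresponding expression for $L_\varepsilon$. Inserting $p_\varepsilon(y)=(2\pi)^{-d}\int_{\R^d}e^{\iota u\cdot y}e^{-\varepsilon|u|^2/2}\,du$, expanding the product over $i$ and $j=1,\dots,m_i/2$, applying Fubini (legitimate thanks to the cut-off $e^{-\varepsilon|u|^2/2}$), and using that $\sum_{i,j}\xi^j_i\cdot X_{u^j_i}$ is a centered Gaussian with characteristic function $\exp(-\tfrac12\Var(\sum_{i,j}\xi^j_i\cdot X_{u^j_i}))$, one gets, with $M=|\mathbf{m}|/2$,
\[
\E\Big[\textstyle\prod_i\big(L_\varepsilon(b_i,x)-L_\varepsilon(a_i,x)\big)^{m_i/2}\Big]=\frac{1}{(2\pi)^{dM}}\int_{\prod_i[a_i,b_i]^{m_i/2}}\int_{\R^{dM}}\exp\Big(\iota x\cdot\!\!\sum_{i,j}\xi^j_i-\tfrac12\Var\big(\textstyle\sum_{i,j}\xi^j_i\cdot X_{u^j_i}\big)\Big)\prod_{i,j}e^{-\frac{\varepsilon}{2}|\xi^j_i|^2}\,d\xi\,du.
\]
Letting $\varepsilon\downarrow0$ by dominated convergence (with the dominating function furnished by the estimate below) removes the cut-off and produces exactly the integral in (\ref{moments}); combining with the first paragraph and noting $\prod_i(2\pi)^{m_id/2}=(2\pi)^{dM}$ yields the asserted identity.

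The quantitative ingredient needed both to dominate the $\varepsilon$-integrals and to see that the right-hand side of (\ref{moments}) is finite is the local non-determinism (\ref{local}). Reordering the times $\{u^j_i\}$ as $0=t_0<t_1<\cdots<t_M$, writing $\zeta_\ell$ for the Fourier vector attached to $t_\ell$, and using $X_{t_\ell}=\sum_{p\le\ell}(X_{t_p}-X_{t_{p-1}})$ (self-similarity forces $X_0=0$), the unit-Jacobian substitution $v_p=\sum_{\ell\ge p}\zeta_\ell$ turns $\sum_{i,j}\xi^j_i\cdot X_{u^j_i}$ into $\sum_{p=1}^Mv_p\cdot(X_{t_p}-X_{t_{p-1}})$; componentwise independence together with (\ref{local}) then gives $\Var\big(\sum_{i,j}\xi^j_i\cdot X_{u^j_i}\big)\ge\kappa_{H,M}\sum_{p=1}^M|v_p|^2(t_p-t_{p-1})^{2H}$, whence $\int_{\R^{dM}}\exp(-\tfrac12\Var(\cdots))\,d\xi\le C^M\prod_{p=1}^M(t_p-t_{p-1})^{-Hd}$. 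Integrating over $0<t_1<\cdots<t_M<b_n$ is a standard iterated Beta integral that converges precisely because $Hd<1$ (which holds in the setting of Theorem \ref{thmdis}, where $Hd\le H(2|\boldsymbol{k}|+d)<1$), and after restoring the combinatorial factor from unordering it yields $\E[L(T,x)^m]\le C^m(m!)^{Hd}$.

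Finally, for the moment-determinacy assertion: the bound $\E[L(T,x)^m]\le C^m(m!)^{Hd}$ with $Hd<1$ shows that $\lambda\mapsto\E[e^{\lambda L(T,x)}]$ is finite for every $\lambda\in\R$. Conditionally on $\mathcal{F}^X$ the vector $(Z_i)_{i=1}^n$ is centered Gaussian with variances at most $L(b_n,x)$, so for every $\theta\in\R^n$,
\[
\E\Big[\exp\big(\textstyle\sum_iZ_i\theta_i\big)\Big]=\E\Big[\exp\big(\tfrac12\textstyle\sum_i\theta_i^2(L(b_i,x)-L(a_i,x))\big)\Big]\le\E\Big[\exp\big(\tfrac12|\theta|^2L(b_n,x)\big)\Big]<\infty,
\]
i.e. the joint moment generating function of $(Z_i)$ is finite in a neighbourhood of the origin, so the law of $(Z_i)$ is determined by its moments. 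I expect the main obstacle to be the local-non-determinism estimate of the third paragraph together with the uniform-integrability and dominated-convergence bookkeeping that legitimizes the mollification argument; once that estimate is secured, the rest is a routine conditioning computation.
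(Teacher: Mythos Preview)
Your argument is correct. The paper's own proof is a single-line citation of Lemma~2.2 and Theorem~6.1 in \cite{hlx2023}; your conditioning on $\mathcal{F}^X$, the Fourier/mollification computation of $\E\big[\prod_i(L(b_i,x)-L(a_i,x))^{m_i/2}\big]$, the local-non-determinism bound giving $\E[L(T,x)^m]\le C^m(m!)^{Hd}$, and the resulting finiteness of the joint moment generating function are exactly the ingredients behind those cited results, so you have simply unpacked what the paper outsources.
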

\begin{proof}
	We can get the result from Lemma 2.2 and  Theorem 6.1 in \cite{hlx2023}.
\end{proof}
\section{The proof of Theorem \ref{thmlp}}
 In this section, we will get the proof of Theorem \ref{thmlp}. Using Fourier inverse transform, we get 
\begin{align*}
	\Delta_{\varepsilon}(T)&:=\varepsilon^{-\frac{N}{2}}\bigg(\varepsilon^{-\frac{|\boldsymbol{k}|+d}{2}}\int_{0}^{T}f^{(\boldsymbol{k})}\big(\varepsilon^{-\frac12}(X_t+x)\big)dt-\widehat{f}(0)L^{(\boldsymbol{k})}(T,x)\bigg)\\&\qquad\qquad\qquad\qquad-\frac{\iota^{N}}{N!}\sum_{i_1,\cdots,i_{N}=1}^{d}L^{(\boldsymbol{k}+\boldsymbol{e}_{i_1}+\boldsymbol{e}_{i_2}+\cdots+\boldsymbol{e}_{i_{N}})}(T,x)\int_{\R^d}v_{i_1}v_{i_2}\cdots v_{i_{N}}f(v)dv\\&=\frac{1}{(2\pi)^d}\int_{0}^{T}\int_{\R^d}\Big(\varepsilon^{-\frac{N}{2}}\big(\widehat{f}(\varepsilon^{\frac{1}{2}}u)-\widehat{f}(0)\big)-\int_{\R^d}f(v)\frac{(\iota u\cdot v)^N}{N!}dv\Big)\prod_{\ell=1}^{d}\big(\iota u_{\ell}\big)^{k_\ell}e^{\iota(X_t+x)\cdot u}dudt.
\end{align*}
and for any $m\ge1$, 
\begin{align*}
	\Big|\E\big[\Delta_{\varepsilon}(T)\big]^{m}\Big|\le\frac{1}{(2\pi)^{md}}\int_{[0,T]^m}\int_{\R^{md}}\prod_{j=1}^{m}\Big|\varepsilon^{-\frac{N}{2}}&\big(\widehat{f}(\varepsilon^{\frac{1}{2}}u_j)-\widehat{f}(0)\big)-\int_{\R^d}f(v_j)\frac{(\iota u_j\cdot v_j)^N}{N!}dv_j\Big|\\&\times\prod_{j=1}^{m}|u_j|^{|\boldsymbol{k}|}\times\exp\Big(-\frac{1}{2}\Var\Big(\sum_{j=1}^{m}u_j\cdot X_{t_j}\Big)\Big)dudt.
\end{align*}
Note that  $\display\Big|\varepsilon^{-\frac{N}{2}}\big(\widehat{f}(\varepsilon^{\frac{1}{2}}u)-\widehat{f}(0)\big)-\int_{\R^d}f(v)\frac{(\iota u\cdot v)^N}{N!}dv\Big|
\le C_{2,1}|u|^N$
from Lemma \ref{2lma1} and 
\begin{align*}
	&\lim\limits_{\varepsilon\downarrow0}\varepsilon^{-\frac{N}{2}}\big(\widehat{f}(\varepsilon^{\frac{1}{2}}u)-\widehat{f}(0)\big)\\=&\lim\limits_{\varepsilon\downarrow0}\int_{\R^d}f(v)\varepsilon^{-\frac{N}{2}}\Big(e^{\iota\varepsilon^{\frac{1}{2}}(u\cdot v)}-1\Big)dv
	\\=&\lim\limits_{\varepsilon\downarrow0}\int_{\R^d}f(v)\varepsilon^{-\frac{N}{2}}\Big(e^{\iota\varepsilon^{\frac{1}{2}}(u\cdot v)}-1-\iota\varepsilon^{\frac{1}{2}}(u\cdot v)-\frac{(\iota\varepsilon^{\frac{1}{2}}(u\cdot v))^2}{2!}-\cdots-\frac{(\iota\varepsilon^{\frac{1}{2}}(u\cdot v))^{N-1}}{(N-1)!}\Big)dv
	\\
	=&\int_{\R^d}f(v)\frac{(\iota u\cdot v)^N}{N!}dv.
\end{align*}
To get the desired result, using dominated convergence theorem, we only need to prove 
\begin{align*}
	J:=\int_{[0,T]^m}\int_{\R^{md}}\prod_{j=1}^{m}|u_j|^{|\boldsymbol{k}|+N}\times\exp\Big(-\frac{1}{2}\Var\Big(\sum_{j=1}^{m}u_j\cdot X_{t_j}\Big)\Big)dudt<+\infty
\end{align*}
when $H(2|\boldsymbol{k}|+d)<1-2NH$, which comes from 
\begin{align*}
	J&=m!\int_{[0,T]^m_{<}}\int_{\R^{md}}\prod_{j=1}^{m}|v_j-v_{j+1}|^{|\boldsymbol{k}|+N}\times\exp\Big(-\frac{1}{2}\Var\Big(\sum_{j=1}^{m}v_j\cdot \big(X_{t_j}-X_{t_{j-1}}\big)\Big)\Big)dudt\\&\le C_{2,2}\int_{[0,T]^m_{<}}\int_{\R^{md}}\prod_{j=1}^{m}\Big(|v_j|^{|\boldsymbol{k}|+N}+|v_{j+1}|^{|\boldsymbol{k}|+N}\Big)\times\exp\Big(-\frac{\kappa_{H,m}}{2}\sum_{j=1}^{m}|v_j|^2(t_j-t_{j-1})^{2H}\Big)dudt\\&\le C_{2,3}\int_{[0,T]^m}\int_{\R^{md}}\sum_{\mS_{m}}\Big(\prod_{j=1}^{m}|v_j|^{(|\boldsymbol{k}|+N)(\bar{p}_{j-1}+p_{j})}\Big)\times\exp\Big(-\frac{\kappa_{H,m}}{2}\sum_{j=1}^{m}|v_j|^2s_j^{2H}\Big)duds\\&\le C_{2,4}\sum_{\mS_{m}}\prod_{j=1}^{m}\int_{0}^{T}s_j^{-H(|\boldsymbol{k}|+N)(\bar{p}_{j-1}+p_{j})-Hd}ds_j<\infty
,\end{align*}
where $[0,T]^m_<=\{(t_1,t_2,\cdots,t_m):0=t_0<t_1<t_2<\cdots<t_m<T\}$, in the first equality we use notation $v_{m+1}=0$ and let $v_j-v_{j+1}=u_j,j=1,2,\cdots,m$, in the first inequality we use the local non-determinism (\ref{local}), in the second inequality we let $t_j-t_{j-1}=s_j,j=1,2,\cdots,m$ and 
\begin{align}\label{mS}
	\mS_m=\Big\{p_j,\bar{p}_j:p_j\in\{0,1\},p_j+\bar{p}_j=1,j=1,2,\cdots,m-1,\bar{p}_0=0,p_{m}=1\Big\}.
\end{align}
\section{The proof of Theorem \ref{thmdis}}
In this section we will give the proof of Theorem \ref{thmdis}. Denote 
\begin{align*}
	F_{\varepsilon}(T)&=\ell_{\varepsilon,H,d}^{(|\boldsymbol{k}|,N)}\bigg(\varepsilon^{-\frac{|\boldsymbol{k}|+d}{2}}\int_{0}^{T}f^{(\boldsymbol{k})}\big(\varepsilon^{-\frac12}(X_t+x)\big)dt-\widehat{f}(0)L^{(\boldsymbol{k})}(T,x)\bigg).
\end{align*}
Using Fourier transform, we get 
\begin{align*}
	F_{\varepsilon}(T)=\ell_{\varepsilon,H,d}^{(|\boldsymbol{k}|,N)}\bigg(\frac{1}{(2\pi)^d}\int_{0}^{T}\int_{\mathbb{R}^d}\prod_{\ell=1}^{d}\big(\iota u_{\ell})^{k_\ell}\Big(\widehat{f}(\varepsilon^{\frac12}u)-\widehat{f}(0)\Big)\exp\big(\iota(X_t+x)\cdot u\big)dudt\bigg).
\end{align*}
To get Theorem \ref{thmdis}, we should  confirm the tightness of processes $\Big(F_{\varepsilon}(T):T>0\Big)$, which comes from the following proposition:
\begin{proposition}\label{tightness}
	Given any integer $m\ge1$, there exists a positive constant $C$ depending on $T,\boldsymbol{k},f,N,H,d$, s.t. 
	\begin{align*}
		\Big|\E\big[F_{\varepsilon}(T_2)-F_{\varepsilon}(T_1)\big]^{m}\Big|\le C\big(T_2-T_1\big)^{\frac{m}{2}(1-Hd)}
	\end{align*}
	for any $0\le T_1<T_2\le T$.
\end{proposition}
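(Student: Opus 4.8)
From the Fourier representation of $F_\varepsilon$ displayed above, expanding $[F_\varepsilon(T_2)-F_\varepsilon(T_1)]^m$ and moving the modulus inside gives, with $R:=T_2-T_1$,
\[
\big|\E[F_\varepsilon(T_2)-F_\varepsilon(T_1)]^m\big|\le\frac{(\ell_{\varepsilon,H,d}^{(|\boldsymbol{k}|,N)})^m}{(2\pi)^{md}}\int_{[T_1,T_2]^m}\int_{\R^{md}}\prod_{j=1}^m|u_j|^{|\boldsymbol{k}|}\big|\widehat f(\varepsilon^{\frac12}u_j)-\widehat f(0)\big|\,e^{-\frac12\Var(\sum_{j}u_j\cdot X_{t_j})}\,du\,dt.
\]
By Lemma \ref{2lma1} one has $|\widehat f(\varepsilon^{\frac12}u_j)-\widehat f(0)|\le C_f(\varepsilon^{\frac12}|u_j|\wedge1)^N$, so it suffices to estimate the Gaussian-type integral with integrand $\prod_j|u_j|^{|\boldsymbol{k}|}(\varepsilon^{\frac12}|u_j|\wedge1)^N$.

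Restricting to the ordered simplex $T_1\le t_1<\cdots<t_m\le T_2$ (a factor $m!$) and making the substitution $u_j=v_j-v_{j+1}$, $v_{m+1}=0$, as in the proof of Theorem \ref{thmlp}, the exponent becomes the variance of a linear combination of increments of $X$ over the consecutive intervals $[0,T_1],[T_1,t_1],[t_1,t_2],\dots$; applying (LND) with $m+1$ points and $X_0=0$, and discarding the $[0,T_1]$ term, yields
\[
\Var\big(\textstyle\sum_{j}u_j\cdot X_{t_j}\big)\ \ge\ \kappa_{H,m+1}\sum_{j=1}^m|v_j|^2 s_j^{2H},\qquad s_1:=t_1-T_1,\ \ s_j:=t_j-t_{j-1}\ (j\ge2),
\]
where $s_j\ge0$, $\sum_j s_j\le R$ and each $s_j\le R$. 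The point of adjoining $0$ is that the first increment is then taken over $[T_1,t_1]$, of length $\le R$, rather than over $[0,t_1]$.

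Next I expand $\prod_j|v_j-v_{j+1}|^{|\boldsymbol{k}|}(\varepsilon^{\frac12}|v_j-v_{j+1}|\wedge1)^N$ into a finite sum of monomials $\prod_j|v_j|^{a_j}(\varepsilon^{\frac12}|v_j|\wedge1)^{b_j}$ — the $\mS_m$-type expansion already used in Section 3 — discarding the monomials carrying the vanishing factor $v_{m+1}$, and noting $\sum_j a_j=m|\boldsymbol{k}|$, $\sum_j b_j=mN$, $a_j\in\{0,|\boldsymbol{k}|,2|\boldsymbol{k}|\}$, $b_j\in\{0,N,2N\}$. For each monomial I bound the $s$-simplex integral by $\prod_j\int_0^R ds_j$ and carry out the $v_j$-integrations, whereupon the estimate becomes a product over $j$ of one-variable integrals
\[
\int_0^R\int_{\R^d}|v|^{a_j}\big(\varepsilon^{\frac12}|v|\wedge1\big)^{b_j}e^{-\frac{\kappa_{H,m+1}}{2}|v|^2 s^{2H}}\,dv\,ds,
\]
each handled by Lemma \ref{2lma3} with $\gamma=1$, $T=R$ (the hypothesis $H(a_j+d)\le H(2|\boldsymbol{k}|+d)<1$ holds): up to a constant it equals $\varepsilon^{b_j/2}R^{1-H(a_j+b_j+d)}$, $\varepsilon^{b_j/2}\ln(1+\varepsilon^{-1/(2H)}R)$, or $\varepsilon^{-\frac12(a_j+d-\frac1H)}$ according to whether $H(a_j+b_j+d)<1$, $=1$, or $>1$.

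It remains to multiply the $m$ one-variable bounds by $(\ell_{\varepsilon,H,d}^{(|\boldsymbol{k}|,N)})^m$ and check that all powers of $\varepsilon$ cancel (uniformity in $\varepsilon$) and that the surviving power of $R$ is $\ge\tfrac m2(1-Hd)$. Using $\sum_j a_j=m|\boldsymbol{k}|$ and $\sum_j b_j=mN$, a short computation shows that in the boundary case $H(2|\boldsymbol{k}|+d)=1-2NH$ every monomial is either strictly subcritical — contributing $\varepsilon^{b_j/2}R^{1-H(a_j+b_j+d)}$, with $\sum_{\text{subcritical }j}(1-H(a_j+b_j+d))=\tfrac m2(1-Hd)$ — or exactly critical, contributing a factor $\ln(1+\varepsilon^{-1/(2H)}R)$; the critical indices are forced to be non-adjacent and to lie in $\{2,\dots,m\}$, hence number at most $\lfloor m/2\rfloor$, and the $\ln^{-1/2}(1+\varepsilon^{-1/2})$ built into $\ell_{\varepsilon,H,d}^{(|\boldsymbol{k}|,N)}$ absorbs them. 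In the range $1-2NH<H(2|\boldsymbol{k}|+d)<1$ one uses in addition the interpolation $(\varepsilon^{\frac12}|v|\wedge1)^{b}\le(\varepsilon^{\frac12}|v|)^{\theta b}$, $\theta\in[0,1]$, to lower the effective $\wedge$-exponents just enough to keep all one-variable integrals convergent while matching the $\varepsilon$-budget $\tfrac{m(1-H(2|\boldsymbol{k}|+d))}{2H}$ of $(\ell_{\varepsilon,H,d}^{(|\boldsymbol{k}|,N)})^m$; this budget is $\le mN$ precisely because $H(2|\boldsymbol{k}|+d)\ge1-2NH$, which is exactly what makes the matching possible. \textbf{The main obstacle} is this final bookkeeping: choosing the interpolation parameters $\theta_j$ consistently over all monomials so that, monomial by monomial, the $\varepsilon$-powers cancel against $(\ell_{\varepsilon,H,d}^{(|\boldsymbol{k}|,N)})^m$ while the $R$-exponent never drops below $\tfrac m2(1-Hd)$ — a convexity/counting argument on the multi-indices $(a_j)_j,(b_j)_j$ that I would relegate to the technical lemmas of Section 5.
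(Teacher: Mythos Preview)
Your proof is essentially the paper's: the same Fourier--LND--$\mS_m$ reduction to products of one-variable integrals handled by Lemma~\ref{2lma3}, with the final ``bookkeeping'' you flag as the main obstacle packaged verbatim as Lemma~\ref{5lma3} (take $a=|\boldsymbol{k}|$, $b=N$, $t=T_2-T_1$, inequality~(\ref{caseT})). One simplification worth noting: bound the \emph{combined} factor $|v_j-v_{j+1}|^{|\boldsymbol{k}|}\big(\varepsilon^{1/2}|v_j-v_{j+1}|\wedge1\big)^N$ by $C\big(|v_j|^{|\boldsymbol{k}|}(\varepsilon^{1/2}|v_j|\wedge1)^N+|v_{j+1}|^{|\boldsymbol{k}|}(\varepsilon^{1/2}|v_{j+1}|\wedge1)^N\big)$ before expanding, so that after the $\mS_m$-expansion $a_j=q_j|\boldsymbol{k}|$ and $b_j=q_jN$ share the \emph{same} $q_j\in\{0,1,2\}$ with $\sum_j q_j=m$ --- this coupling, together with $m_0=m_2=(m-m_1)/2$, is exactly what makes the $\varepsilon$- and $R$-powers balance to $R^{\frac m2(1-Hd)}$ without any case analysis, and your interpolation for the $q_j=1$ factors becomes the single explicit split $N=(-|\boldsymbol{k}|-\tfrac d2+\tfrac1{2H})+(|\boldsymbol{k}|+N+\tfrac d2-\tfrac1H)$ used in the proof of Lemma~\ref{5lma3}.
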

\begin{proof}
	By the definition of $F_{\varepsilon}(T)$, 
	\begin{align*}
		\E\big[F_{\varepsilon}(T_2)-F_{\varepsilon}(T_1)\big]^{m}=\bigg(\ell_{\varepsilon,H,d}^{(|\boldsymbol{k}|,N)}\bigg)^m&\frac{1}{(2\pi)^{md}}\int_{[T_1,T_2]^m}\int_{\R^{md}}\prod_{j=1}^{m}\Big(\prod_{\ell=1}^{d}\big(\iota u_{j\ell})^{k_\ell}\Big)\\\times\prod_{j=1}^{m}&\Big(\widehat{f}(\varepsilon^{\frac12}u_j)-\widehat{f}(0)\Big)\exp\Big(-\frac{1}{2}\Var\Big(\sum_{j=1}^{m}u_j\cdot X_{t_j}\Big)+\iota\sum_{j=1}^{m}u_j\cdot x\Big)dudt\\=\bigg(\ell_{\varepsilon,H,d}^{(|\boldsymbol{k}|,N)}\bigg)^m&\frac{m!}{(2\pi)^{md}}\int_{[T_1,T_2]^m_<}\int_{\R^{md}}\prod_{j=1}^{m}\Big(\prod_{\ell=1}^{d}\big(\iota u_{j\ell})^{k_\ell}\Big)\\\times\prod_{j=1}^{m}&\Big(\widehat{f}(\varepsilon^{\frac12}u_j)-\widehat{f}(0)\Big)\exp\Big(-\frac{1}{2}\Var\Big(\sum_{j=1}^{m}u_j\cdot X_{t_j}\Big)+\iota\sum_{j=1}^{m}u_j\cdot x\Big)dudt,
	\end{align*}
	where $\display[T_1,T_2]^m_<=\big\{(t_1,t_2,\cdots,t_m):T_1<t_1<t_2<\cdots<t_m<T_2\big\}$. Then using coordinate transform $v_j=u_j-u_{j+1},j=1,2,\cdots,m$ with $v_{m+1}=0$, we get 
	\begin{align*}
		&\Big|\E\big[F_{\varepsilon}(T_2)-F_{\varepsilon}(T_1)\big]^{m}\Big|\\\le& C^{(1)}_{4,1}\bigg(\ell_{\varepsilon,H,d}^{(|\boldsymbol{k}|,N)}\bigg)^m\int_{[T_1,T_2]^m_<}\int_{\R^{md}}\prod_{j=1}^{m}\Big|\prod_{\ell=1}^{d}\big(\iota v_{j\ell}-\iota v_{j+1,\ell}\big)^{k_\ell}\Big|\\&\qquad\qquad\qquad\times\prod_{j=1}^{m}\Big|\widehat{f}\big(\varepsilon^{\frac12}(v_j-v_{j+1})\big)-\widehat{f}(0)\Big|\exp\Big(-\frac{1}{2}\Var\Big(\sum_{j=1}^{m}v_j\cdot \big(X_{t_j}-X_{t_{j-1}}\big)\Big)\Big)dvdt\\\le& C^{(1)}_{4,2}\bigg(\ell_{\varepsilon,H,d}^{(|\boldsymbol{k}|,N)}\bigg)^m\int_{[T_1,T_2]^m_<}\int_{\R^{md}}\prod_{j=1}^{m}\Big(|v_{j}|^{|\boldsymbol{k}|}\big(|\varepsilon^{\frac12}v_j|\wedge1\big)^N+|v_{j+1}|^{|\boldsymbol{k}|}\big(|\varepsilon^{\frac12}v_{j+1}|\wedge1\big)^N\Big)\\&\qquad\qquad\qquad\qquad\qquad\qquad\qquad\qquad\qquad\qquad\times\exp\Big(-\frac{\kappa_{H,m}}{2}\sum_{j=1}^{m}|v_j|^2\big({t_j}-{t_{j-1}}\big)^{2H}\Big)dvdt,
	\end{align*}
	where in the second inequality we use Lemma \ref{2lma1} to get $$\big|\widehat{f}\big(\varepsilon^{\frac12}(v_j-v_{j+1})\big)-\widehat{f}(0)\big|\le C^{(1)}_{4,3}\big(|v_j-v_{j+1}|\wedge1\big)^N\le C^{(1)}_{4,4}\Big(\big(|v_j|\wedge1\big)^N+\big(|v_{j+1}|\wedge1\big)^N\Big).$$
	Then using coordinate transform $s_j=t_{j}-t_{j-1},j=1,2,\cdots,m$ with $t_0=T_1$, we get 
	\begin{align*}
		\Big|\E&\big[F_{\varepsilon}(T_2)-F_{\varepsilon}(T_1)\big]^{m}\Big|\le\\& C^{(1)}_{4,5}\bigg(\ell_{\varepsilon,H,d}^{(|\boldsymbol{k}|,N)}\bigg)^m\sum_{\mS_{m}}\int_{[0,T_2-T_1]^m}\int_{\R^{md}}\prod_{j=1}^{m}\Big(|v_{j}|^{|\boldsymbol{k}|}\big(|\varepsilon^{\frac12}v_j|\wedge1\big)^N\Big)^{\bar{p}_{j-1}+p_j}\exp\Big(-\frac{\kappa_{H,m}}{2}\sum_{j=1}^{m}|v_j|^2s_j^{2H}\Big)dvds,
	\end{align*}
	where $\mS_{m}$ is defined in (\ref{mS}). Note that $\bar{p}_{j-1}+p_j\in\{0,1,2\}$ and $\display\sum_{j=1}^{m}(\bar{p}_{j-1}+p_j)=m$, using Lemma \ref{5lma3} in the case $a=|\boldsymbol{k}|,b=N$, there exists a constant $C^{(1)}_{4,6}>0$, s.t. 
	\begin{align*}
		\Big|\E\big[F_{\varepsilon}(T_2)-F_{\varepsilon}(T_1)\big]^{m}\Big|\le C^{(1)}_{4,6}\big(T_2-T_1\big)^{\frac{m}{2}(1-Hd)}
	\end{align*}
	for any $0\le T_1<T_2\le T$.
\end{proof}
Then we only need to prove that the processes $\Big(F_{\varepsilon}(T):T>0\Big)$ converges to $\displaystyle\Big(W(L(T,x)),T\ge0\Big)$ in finite dimensional distribution. From Lemma \ref{2lma4}, we shall prove that fix a finite number of disjoint intervals 
$(a_i, b_i]$ in $[0,\infty)$, $i = 1, \dots, n$  with $b_i \leq a_{i+1}$and a multi-index $\mathbf{m} = (m_1, \dots, m_n)$, with $m_i \geq 1$, $1 \leq  i \leq n$, there exists
\begin{align}\label{momenttarget}
	\lim\limits_{\varepsilon\downarrow0}\E \Big(\prod_{i=1}^n \big[F_{\varepsilon}(b_i) - F_{\varepsilon}(a_i)\big]^{m_i} \Big)=\E \Big(\prod_{i=1}^n \big[W (L(b_i,x) ) - W (L (a_i,x))\big]^{m_i} \Big).
\end{align}
 We divide the proof of (\ref{momenttarget}) into three parts: (i) $|\boldsymbol{m}|$ is odd (ii) $|\boldsymbol{m}|$ is even but one of $m_i$ is odd; (ii) all $m_i$ are even. Note that 
 \begin{align*}
 	\E \Big(\prod_{i=1}^n \big[F_{\varepsilon}(b_i) &- F_{\varepsilon}(a_i)\big]^{m_i} \Big)=\bigg(\ell_{\varepsilon,H,d}^{(|\boldsymbol{k}|,N)}\bigg)^{|\boldsymbol{m}|}\frac{1}{(2\pi)^{|\boldsymbol{m}|d}}\int_{\prod\limits_{i=1}^{n}[a_i,b_i]^{m_i}}\int_{\R^{|\boldsymbol{m}|d}}\Big( \prod^{n}_{i=1}\prod^{m_i}_{j=1}\prod_{\ell=1}^d\big(\iota u^i_{j\ell}\big)^{k_\ell}\Big) \\&\qquad\qquad\times\Big( \prod^{n}_{i=1}\prod^{m_i}_{j=1} (\widehat{f}(\varepsilon^{\frac12}u^i_j)-\widehat{f}(0))\Big)\exp\Big(\iota x\cdot \sum_{i=1}^{n}\sum_{j=1}^{m_i}u^i_j-\frac{1}{2}\Var\Big(\sum_{i=1}^{n}\sum_{j=1}^{m_i}u^i_j\cdot X_{t^i_j}\Big)\Big)dudt\\&\qquad\qquad\qquad=\bigg(\ell_{\varepsilon,H,d}^{(|\boldsymbol{k}|,N)}\bigg)^{|\boldsymbol{m}|}\frac{\boldsymbol{m}!}{(2\pi)^{|\boldsymbol{m}|d}}\int_{D_{\boldsymbol{m}}}\int_{\R^{|\boldsymbol{m}|d}}\Big( \prod^{n}_{i=1}\prod^{m_i}_{j=1}\prod_{\ell=1}^d\big(\iota u^i_{j\ell}\big)^{k_\ell}\Big) \\&\qquad\qquad\qquad\times\Big( \prod^{n}_{i=1}\prod^{m_i}_{j=1} (\widehat{f}(\varepsilon^{\frac12}u^i_j)-\widehat{f}(0))\Big)\exp\Big(\iota x\cdot \sum_{i=1}^{n}\sum_{j=1}^{m_i}u^i_j-\frac{1}{2}\Var\Big(\sum_{i=1}^{n}\sum_{j=1}^{m_i}u^i_j\cdot X_{t^i_j}\Big)\Big)dudt,
 \end{align*}
 where $u_{j}^i=(u_{j1}^i,\cdots,u_{jd}^i)$ for $i,j$, $\sum\limits_{i=1}^n m_i=|\mathbf{m}|$,  $\prod\limits_{i=1}^n m_i!=\mathbf{m}!$ and \begin{equation}\label{Dm}
 	D_{\boldsymbol{m}}=\big\{t \in \R^{|\boldsymbol{m}|}: a_{i}<t^i_{1}<\cdots <t^i_{m_i}<b_{i}, 1\le i\le n\big\}.
 \end{equation}
Define the index set of $u_j^i$'s above as 
\[
J_0=\big\{(i,j): 1\leq i\leq N, 1\leq j\leq m_i \big\},
\]
where for any $(i_1, j_1)$ and $(i_2,j_2)\in J_0$,  the following dictionary ordering 
\[
(i_1, j_1)\leq (i_2,j_2)
\]
exists if $i_1<i_2$ or $i_1=i_2$ and $j_1\leq j_2$. For any $(i,j)$ in $J_0$, let $\#(i,j)=\sum\limits^{i-1}_{k=1}m_k+j$, which means that $(i,j)$ is the $(\sum\limits^{i-1}_{k=1}m_k+j)$-th element in $J_0$ under the above ordering. Using these notations, 
\begin{align*}
	\E \Big(\prod_{i=1}^n \big[F_{\varepsilon}(b_i) - F_{\varepsilon}(a_i)\big]^{m_i} \Big)=&\bigg(\ell_{\varepsilon,H,d}^{(|\boldsymbol{k}|,N)}\bigg)^{|\boldsymbol{m}|}\frac{\boldsymbol{m}!}{(2\pi)^{|\boldsymbol{m}|d}}\int_{D_{\boldsymbol{m}}}\int_{\R^{|\boldsymbol{m}|d}}\Big( \prod^{|\boldsymbol{m}|}_{j=1}\prod_{\ell=1}^d\big(\iota u_{j\ell}\big)^{k_\ell}\Big) \\&\times\Big( \prod^{|\boldsymbol{m}|}_{j=1} (\widehat{f}(\varepsilon^{\frac{1}{2}}u_j)-\widehat{f}(0))\Big)\exp\Big(\iota x\cdot \sum_{j=1}^{|\boldsymbol{m}|}u_j-\frac{1}{2}\Var\Big(\sum^{|\boldsymbol{m}|}_{j=1}u_j\cdot X_{t_j}\Big)\Big)dudt.
\end{align*}
Then recalling Lemma \ref{2lma4}, we only need to prove the following propositions:
\begin{proposition}\label{odd1}
	Assume $|\boldsymbol{m}|$ is odd. We have 
	\begin{align*}
		\lim\limits_{\varepsilon\downarrow0}\E \Big(\prod_{i=1}^n \big[F_{\varepsilon}(b_i) - &F_{\varepsilon}(a_i)\big]^{m_i} \Big)=\lim\limits_{\varepsilon\downarrow0}\bigg(\ell_{\varepsilon,H,d}^{(|\boldsymbol{k}|,N)}\bigg)^{|\boldsymbol{m}|}\frac{\boldsymbol{m}!}{(2\pi)^{|\boldsymbol{m}|d}}\int_{D_{\boldsymbol{m}}}\int_{\R^{|\boldsymbol{m}|d}}\Big( \prod^{|\boldsymbol{m}|}_{j=1}\prod_{\ell=1}^d\big(\iota u_{j\ell}\big)^{k_\ell}\Big) \\&\times\Big( \prod^{|\boldsymbol{m}|}_{j=1} (\widehat{f}(\varepsilon^{\frac{1}{2}}u_j)-\widehat{f}(0))\Big)\exp\Big(\iota x\cdot \sum_{j=1}^{|\boldsymbol{m}|}u_j-\frac{1}{2}\Var\Big(\sum^{|\boldsymbol{m}|}_{j=1}u_j\cdot X_{t_j}\Big)\Big)dudt=0
	\end{align*}
\end{proposition}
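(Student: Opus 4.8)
The plan is to estimate the modulus of the displayed integral representation of $\E\big(\prod_{i=1}^n[F_\varepsilon(b_i)-F_\varepsilon(a_i)]^{m_i}\big)$ and to show that, once $|\boldsymbol{m}|$ is odd, every contribution is $o(1)$ as $\varepsilon\downarrow0$. The key structural remark is that the intervals $(a_i,b_i]$ are pairwise disjoint with $b_i\le a_{i+1}$, so under the dictionary ordering of $J_0$ the $|\boldsymbol{m}|$ time variables satisfy $0\le a_1<t_1<t_2<\cdots<t_{|\boldsymbol{m}|}<b_n=:T$; hence $D_{\boldsymbol{m}}$ lies in one ordered simplex, the phase $\exp(\iota x\cdot\sum_j u_j)$ has modulus $1$, and the local non-determinism (\ref{local}) is available with $n=|\boldsymbol{m}|$ (recall $X\in G^{\sigma,H}_d\subseteq G^H_d$).

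Then I would run the same chain of estimates used for Proposition \ref{tightness}. Summation by parts gives $\sum_j u_j\cdot X_{t_j}=\sum_j v_j\cdot(X_{t_j}-X_{t_{j-1}})$ with $t_0=0$, $v_j=\sum_{i\ge j}u_i$, $u_j=v_j-v_{j+1}$ and $v_{|\boldsymbol{m}|+1}=0$, so (\ref{local}) yields $\exp\big(-\tfrac12\Var(\sum_j u_j\cdot X_{t_j})\big)\le\exp\big(-\tfrac{\kappa_{H,|\boldsymbol{m}|}}{2}\sum_j|v_j|^2(t_j-t_{j-1})^{2H}\big)$. Combining $\big|\prod_\ell(\iota u_{j\ell})^{k_\ell}\big|\le|u_j|^{|\boldsymbol{k}|}$ with the estimate $|\widehat f(\varepsilon^{1/2}u_j)-\widehat f(0)|\le C_f(|\varepsilon^{1/2}u_j|\wedge1)^N$ of Lemma \ref{2lma1}, then using $|u_j|^{|\boldsymbol{k}|}(|\varepsilon^{1/2}u_j|\wedge1)^N\le 2^{|\boldsymbol{k}|+N}\big(|v_j|^{|\boldsymbol{k}|}(|\varepsilon^{1/2}v_j|\wedge1)^N+|v_{j+1}|^{|\boldsymbol{k}|}(|\varepsilon^{1/2}v_{j+1}|\wedge1)^N\big)$, passing to the increments $s_j=t_j-t_{j-1}\in(0,T)$ and enlarging the time domain to $(0,T)^{|\boldsymbol{m}|}$, I would arrive at
\[
\Big|\E\big(\textstyle\prod_{i=1}^n[F_\varepsilon(b_i)-F_\varepsilon(a_i)]^{m_i}\big)\Big|\le C\big(\ell_{\varepsilon,H,d}^{(|\boldsymbol{k}|,N)}\big)^{|\boldsymbol{m}|}\sum_{\mS_{|\boldsymbol{m}|}}\prod_{j=1}^{|\boldsymbol{m}|}\int_0^T\!\!\int_{\R^d}|v_j|^{q_j|\boldsymbol{k}|}\big(|\varepsilon^{1/2}v_j|\wedge1\big)^{q_jN}e^{-c|v_j|^2 s_j^{2H}}\,dv_j\,ds_j,
\]
where $\mS_{|\boldsymbol{m}|}$ is the index set (\ref{mS}), $q_j=\bar p_{j-1}+p_j\in\{0,1,2\}$ and $\sum_{j=1}^{|\boldsymbol{m}|}q_j=|\boldsymbol{m}|$.

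The decisive step is the power count. Each inner factor is bounded by Lemma \ref{2lma3} with $\gamma=1$, $a=q_j|\boldsymbol{k}|$, $b=q_jN$: a factor with $q_j=0$ equals $\int_0^T\int_{\R^d}e^{-c|v|^2 s^{2H}}dv\,ds<\infty$ (as $Hd<1$); a factor with $q_j=2$ is the critical case of Lemma \ref{2lma3} when $1-2NH=H(2|\boldsymbol{k}|+d)$ and its third case when $1-2NH<H(2|\boldsymbol{k}|+d)<1$; and a factor with $q_j=1$ falls into one of the three cases of Lemma \ref{2lma3} according to the sign of $H(|\boldsymbol{k}|+d)-(1-NH)$. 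Writing $n_r=\#\{j:q_j=r\}$, the identities $\sum_j q_j=|\boldsymbol{m}|$ and $n_0+n_1+n_2=|\boldsymbol{m}|$ force $n_0=n_2$ and $2n_2+n_1=|\boldsymbol{m}|$; matching the $\varepsilon$-exponents (and, in the critical regime, the $\ln(1+\varepsilon^{-1/2})$-exponents) contributed by the $q_j=1$ and $q_j=2$ factors against $\big(\ell_{\varepsilon,H,d}^{(|\boldsymbol{k}|,N)}\big)^{|\boldsymbol{m}|}$, one finds that the bound for a fixed element of $\mS_{|\boldsymbol{m}|}$ is $O\big((\ln(1+\varepsilon^{-1/2}))^{-n_1/2}\big)$ in the critical regime and $O\big(\varepsilon^{c\,n_1}(\ln(1+\varepsilon^{-1/2}))^{n_1}\big)$, for a constant $c>0$ depending only on $H,d,N,\boldsymbol{k}$, in the regime $1-2NH<H(2|\boldsymbol{k}|+d)<1$ (here one uses $Hd<1$, $H(|\boldsymbol{k}|+d)<1$, $2N>\frac1H-2|\boldsymbol{k}|-d$, and $H(|\boldsymbol{k}|+d)<1-NH$—the last following from $N\ge1$—for the critical regime). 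In either case this tends to $0$ whenever $n_1\ge1$. Since $|\boldsymbol{m}|$ is odd, $n_1=|\boldsymbol{m}|-2n_2$ is odd and in particular positive, so every summand vanishes in the limit and the proposition follows.

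I expect the main obstacle to be the bookkeeping of this last step: one must separate the two regimes $1-2NH=H(2|\boldsymbol{k}|+d)$ and $1-2NH<H(2|\boldsymbol{k}|+d)<1$, and inside the latter keep track of which of the three cases of Lemma \ref{2lma3} governs the $q_j=1$ factors, checking in each instance that the net exponent of $\varepsilon$—or, in the critical regime, the net power of $\ln(1+\varepsilon^{-1/2})$—becomes strictly decaying as soon as $n_1\ge1$. The remaining ingredients (the summation by parts, the elementary inequalities behind Lemma \ref{2lma1}, and the integral bounds of Lemma \ref{2lma3}) are routine and parallel the proof of Proposition \ref{tightness}.
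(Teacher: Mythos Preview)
Your proof is correct and mirrors the paper's argument: after the change of variables $u_j=v_j-v_{j+1}$, local non-determinism and Lemma~\ref{2lma1}, the paper simply invokes Lemma~\ref{5lma3} (with $a=|\boldsymbol{k}|$, $b=N$) to perform exactly the power count you carry out by hand via Lemma~\ref{2lma3}, and then uses the same parity observation that $n_1\ge1$ when $|\boldsymbol{m}|$ is odd. One small slip in your parenthetical: in the critical regime the inequality $H(|\boldsymbol{k}|+d)<1-NH$ follows from $Hd<1$ (compute $1-NH-H(|\boldsymbol{k}|+d)=\tfrac{1-Hd}{2}$), not from $N\ge1$.
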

\begin{proposition}\label{odd2}
	Assume $|\boldsymbol{m}|$ is even but some $m_{i}$ is odd. We have 
		\begin{align*}
		\lim\limits_{\varepsilon\downarrow0}\E \Big(\prod_{i=1}^n \big[F_{\varepsilon}(b_i) -& F_{\varepsilon}(a_i)\big]^{m_i} \Big)=\lim\limits_{\varepsilon\downarrow0}\bigg(\ell_{\varepsilon,H,d}^{(|\boldsymbol{k}|,N)}\bigg)^{|\boldsymbol{m}|}\frac{\boldsymbol{m}!}{(2\pi)^{|\boldsymbol{m}|d}}\int_{D_{\boldsymbol{m}}}\int_{\R^{|\boldsymbol{m}|d}}\Big( \prod^{|\boldsymbol{m}|}_{j=1}\prod_{\ell=1}^d\big(\iota u_{j\ell}\big)^{k_\ell}\Big) \\&\times\Big( \prod^{|\boldsymbol{m}|}_{j=1} (\widehat{f}(\varepsilon^{\frac{1}{2}}u_j)-\widehat{f}(0))\Big)\exp\Big(\iota x\cdot \sum_{j=1}^{|\boldsymbol{m}|}u_j-\frac{1}{2}\Var\Big(\sum^{|\boldsymbol{m}|}_{j=1}u_j\cdot X_{t_j}\Big)\Big)dudt=0
	\end{align*}
\end{proposition}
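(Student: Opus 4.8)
The plan is to prove Proposition \ref{odd2} by a direct absolute‑value estimate of the mixed moment — since here we only need the limit to vanish rather than to identify it, discarding all oscillation is harmless. Starting from the displayed integral formula for $\E\big(\prod_{i=1}^n[F_\varepsilon(b_i)-F_\varepsilon(a_i)]^{m_i}\big)$ in the globally relabelled variables $u_1,\dots,u_{|\boldsymbol{m}|}$ with times ranging over $D_{\boldsymbol{m}}$, I would use $|e^{\iota(\cdot)}|=1$, the trivial bound $\big|\prod_\ell(\iota u_{j\ell})^{k_\ell}\big|\le|u_j|^{|\boldsymbol{k}|}$, and the second assertion of Lemma \ref{2lma1}, $|\widehat f(\varepsilon^{1/2}u_j)-\widehat f(0)|\le C_f(|\varepsilon^{1/2}u_j|\wedge1)^N$, to reduce the statement to proving
\[
\big(\ell_{\varepsilon,H,d}^{(|\boldsymbol{k}|,N)}\big)^{|\boldsymbol{m}|}\int_{D_{\boldsymbol{m}}}\int_{\R^{|\boldsymbol{m}|d}}\prod_{j=1}^{|\boldsymbol{m}|}|u_j|^{|\boldsymbol{k}|}\big(|\varepsilon^{1/2}u_j|\wedge1\big)^{N}\exp\Big(-\tfrac12\Var\Big(\sum_{j=1}^{|\boldsymbol{m}|}u_j\cdot X_{t_j}\Big)\Big)\,du\,dt\ \longrightarrow\ 0.
\]

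Next I would carry out the reductions already used in Section 3 and in the proof of Proposition \ref{tightness}: the volume‑preserving substitution $u_j=v_j-v_{j+1}$ (equivalently $v_j=u_j+u_{j+1}+\dots+u_{|\boldsymbol{m}|}$, $v_{|\boldsymbol{m}|+1}=0$), which turns the exponent into $\Var\big(\sum_j v_j\cdot(X_{t_j}-X_{t_{j-1}})\big)$ and, via the local non‑determinism (\ref{local}), bounds it below by $\kappa_{H,|\boldsymbol{m}|}\sum_j|v_j|^2(t_j-t_{j-1})^{2H}$ (with $t_0:=0$, so $X_{t_0}=0$); then, using $|v_j-v_{j+1}|^{|\boldsymbol{k}|}\le c(|v_j|^{|\boldsymbol{k}|}+|v_{j+1}|^{|\boldsymbol{k}|})$ together with $(|\varepsilon^{1/2}(v_j-v_{j+1})|\wedge1)^N\le c((|\varepsilon^{1/2}v_j|\wedge1)^N+(|\varepsilon^{1/2}v_{j+1}|\wedge1)^N)$ (the latter by Lemma \ref{2lma1}), I would expand the prefactor into a finite sum over the index set $\mathcal S_{|\boldsymbol{m}|}$ of (\ref{mS}), a generic summand of which is $\prod_j|v_j|^{|\boldsymbol{k}|e_j}(|\varepsilon^{1/2}v_j|\wedge1)^{Ne_j}$ with $e_j:=\bar p_{j-1}+p_j\in\{0,1,2\}$ and $\sum_j e_j=|\boldsymbol{m}|$. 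Finally the substitution $s_j=t_j-t_{j-1}$ decouples the $v$–$s$ integral into $\prod_j\int\int_{\R^d}|v_j|^{|\boldsymbol{k}|e_j}(|\varepsilon^{1/2}v_j|\wedge1)^{Ne_j}e^{-c|v_j|^2s_j^{2H}}\,dv_j\,ds_j$, the $s_j$ running over the ranges forced by $D_{\boldsymbol{m}}$ — in particular, a gap $s_j$ joining two different blocks is bounded below by the distance between those blocks — and each factor is then controlled by Lemma \ref{2lma3} with $a=|\boldsymbol{k}|e_j$, $b=Ne_j$.

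The heart of the argument is a power‑of‑$\varepsilon$ count combined with a parity observation. A term‑by‑term application of Lemma \ref{2lma3} shows that the maximal order in $\varepsilon$ is achieved only by the alternating summand $e_1=e_3=\dots=0$, $e_2=e_4=\dots=2$, which groups the indices into the consecutive pairs $\{t_{2l-1},t_{2l}\}$: the factor attached to $s_{2l}$ reaches $s_{2l}\approx0$ — and thereby attains the order $\varepsilon^{\frac12(\frac1H-2|\boldsymbol{k}|-d)}$ of Lemma \ref{2lma3} (case $1-Hb<H(a+d)<1$), resp.\ $\varepsilon^{N}\ln\varepsilon^{-1}$ (boundary case $1-Hb=H(a+d)$) — only when $t_{2l-1}$ and $t_{2l}$ lie in the same interval $(a_i,b_i)$. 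Because $|\boldsymbol{m}|$ is even, the alternating choice is globally admissible; but because some $m_i$ is odd, the block $i$ occupies an odd number of consecutive positions, so at least one pair $\{t_{2l_0-1},t_{2l_0}\}$ straddles a block boundary, with $t_{2l_0-1}$ the last time of block $i$ and $t_{2l_0}$ the first time of block $i+1$; here $2l_0$ is even, so $s_{2l_0}$ is a \emph{doubled} factor. When $a_{i+1}>b_i$, the gap $s_{2l_0}$ is bounded below by $a_{i+1}-b_i>0$, so from $(|\varepsilon^{1/2}v|\wedge1)^{2N}\le\varepsilon^{N}|v|^{2N}$ that factor is only $O(\varepsilon^{N})$ instead of $O\big(\varepsilon^{\frac12(\frac1H-2|\boldsymbol{k}|-d)}\big)$; propagating this single loss, $\big(\ell_{\varepsilon,H,d}^{(|\boldsymbol{k}|,N)}\big)^{|\boldsymbol{m}|}$ times the alternating summand carries a net factor $\varepsilon^{\,N+|\boldsymbol{k}|+d/2-1/(2H)}$ in the regime $1-2NH<H(2|\boldsymbol{k}|+d)<1$ (positive exponent, precisely because $H(2|\boldsymbol{k}|+d)>1-2NH$) and a net factor $(\ln\varepsilon^{-1})^{-1}$ in the boundary regime $1-2NH=H(2|\boldsymbol{k}|+d)$, hence tends to $0$; every other summand of $\mathcal S_{|\boldsymbol{m}|}$ is of strictly smaller order, which settles the non‑degenerate case.

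The step I expect to be the main obstacle is the degenerate configuration of touching intervals, $a_{i+1}=b_i$, in which the straddling gap $s_{2l_0}$ is no longer bounded away from $0$. There one must exploit that $\{s_{2l_0}<\delta\}$ forces the last time of block $i$ into $(b_i-\delta,b_i)$ and the first time of block $i+1$ into $(b_i,b_i+\delta)$, so the corresponding position variable is confined to an interval of length $O(\delta)$; splitting the $s_{2l_0}$‑integral at a threshold $\delta=\delta(\varepsilon)\downarrow0$ and feeding this time‑localization into Lemma \ref{2lma3} recovers an extra power of $\varepsilon$ (equivalently, an extra factor of $s$ in the integrand) sufficient to still kill the term. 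Carrying out this bookkeeping uniformly over all admissible straddling patterns, and simultaneously verifying that each non‑alternating element of $\mathcal S_{|\boldsymbol{m}|}$ is of lower order, is the genuinely technical part of the proof; the conceptual point is simply that an odd block necessarily produces a cross‑block pair which is too spread out to "resonate", so the normalized moment cannot survive in the limit.
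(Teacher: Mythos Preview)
Your approach coincides with the paper's for strictly separated intervals: the paper also splits $\mS_{|\boldsymbol m|}$ into the single alternating pattern $\mS_{|\boldsymbol m|,0}$ (the only choice with all $e_j\in\{0,2\}$) and its complement $\mS_{|\boldsymbol m|,1}$, disposes of the latter via Lemma~\ref{5lma3} (which is exactly your ``term--by--term application of Lemma~\ref{2lma3}''), and handles the former by observing that the straddling increment $s_{n_1+1}$ is bounded below by the gap $a_{i_0+1}-b_{i_0}>0$, forcing that doubled factor down to $O(\varepsilon^{N})$. One precision you should add: the paper takes $i_0$ to be the \emph{smallest} index with $m_{i_0}$ odd, so that $n_1=\sum_{i\le i_0}m_i$ is odd and $n_1+1$ is even (hence a ``doubled'' index in the alternating pattern). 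For an arbitrary odd block the cumulative sum need not be odd, so your ``some $m_i$'' is not quite enough to pin down the straddling pair.

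The only substantive difference is the touching case $a_{i_0+1}=b_{i_0}$, which you flag as the main obstacle and propose to attack by a $\delta(\varepsilon)$--splitting with time-localization. That route can in fact be carried through --- localizing $t_{n_1}$ to $(b_{i_0}-\delta,b_{i_0})$ produces an extra factor $\delta^{1-Hd}$ from the adjacent odd-index integral $\int (t_{n_1}-t_{n_1-1})^{-Hd}\,dt_{n_1}$, which, balanced against $\varepsilon^{N+|\boldsymbol{k}|+d/2-1/(2H)}\delta^{-H(2|\boldsymbol{k}|+2N+d)}$ from the ``far'' part, suffices --- but it is needlessly laborious. The paper dispatches the degenerate case in one line by invoking Proposition~\ref{tightness}: since $\|F_\varepsilon(a_{i_0+1}+\eta)-F_\varepsilon(a_{i_0+1})\|_{L^p}\le C\,\eta^{(1-Hd)/2}$ uniformly in $\varepsilon$, one perturbs $a_{i_0+1}\mapsto a_{i_0+1}+\eta$ to create a strict gap, lets $\varepsilon\downarrow0$ (limit $0$ by the non-degenerate case already proved), then sends $\eta\to0$. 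You should use this reduction instead of the direct splitting.
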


\begin{proposition}\label{alleven}
Assume all 	$m_{i_0}$ is even. We have 
	\begin{align*}
&\lim\limits_{\varepsilon\downarrow0}\E \Big(\prod_{i=1}^n \big[F_{\varepsilon}(b_i) - F_{\varepsilon}(a_i)\big]^{m_i} \Big)=	\lim\limits_{\varepsilon\downarrow0}\bigg(\ell_{\varepsilon,H,d}^{(|\boldsymbol{k}|,N)}\bigg)^{|\boldsymbol{m}|}\frac{\boldsymbol{m}!}{(2\pi)^{|\boldsymbol{m}|d}}\int_{D_{\boldsymbol{m}}}\int_{\R^{|\boldsymbol{m}|d}}\Big( \prod^{|\boldsymbol{m}|}_{j=1}\prod_{\ell=1}^d\big(\iota u_{j\ell}\big)^{k_\ell}\Big) \\&\quad\quad\qquad\qquad\qquad\qquad\times\Big( \prod^{|\boldsymbol{m}|}_{j=1} (\widehat{f}(\varepsilon^{\frac{1}{2}}u_j)-\widehat{f}(0))\Big)\exp\Big(\iota x\cdot \sum_{j=1}^{|\boldsymbol{m}|}u_j-\frac{1}{2}\Var\Big(\sum^{|\boldsymbol{m}|}_{j=1}u_j\cdot X_{t_j}\Big)\Big)dudt\\=&\Big(D_{H,d}\Big)^{\frac{|\boldsymbol{m}|}{2}}\bigg(\frac{\boldsymbol{m}!}{ 2^{\frac{|\boldsymbol{m}|}{2}}   (2\pi )^{\frac{|\boldsymbol{m}|d}{2} }   (\frac{\boldsymbol{m}}{2})!} \bigg)\displaystyle \int_{\prod\limits_{i=1}^n [a_i ,b_i]^{\frac{m_i}{2}} }\displaystyle \int_{\mathbb{R}^{\frac{|\mathbf{m}|d}{2}}}  \exp\Big(\iota  x\cdot \sum\limits^{\frac{|\boldsymbol{m}|}{2}}_{j=1}u_j-\frac{1}{2}\Var\big(\sum\limits^{\frac{|\boldsymbol{m}|}{2}}_{j=1}u_j\cdot X_{t_j}\big)\Big)dudt\\=&\Big(D_{H,d}\Big)^{\frac{|\boldsymbol{m}|}{2}}\E \Big(\prod_{i=1}^n \big[W (L(b_i,x) ) - W (L (a_i,x))\big]^{m_i} \Big),
\end{align*}
	where $D_{H,d}$ is defined in (\ref{Dhd}).
\end{proposition}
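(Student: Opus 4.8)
The plan is to compute the limit of the $|\boldsymbol{m}|$-fold integral in the displayed chain directly, by separating the microscopic (short time-gap) scales from the macroscopic ones. On the ordered domain $D_{\boldsymbol{m}}$ I would first pass from the frequencies $u_j$ to the increment frequencies $v_j$ — the summation-by-parts substitution already used in the proof of Proposition \ref{tightness} — so that, by the local non-determinism (\ref{local}), the Gaussian factor is bounded above by $\exp\big(-c\sum_{j=1}^{|\boldsymbol{m}|}|v_j|^2\sigma_j^{2H}\big)$ with $\sigma_j=t_j-t_{j-1}$. Since the intervals $(a_i,b_i]$ are disjoint and increasingly ordered, all $|\boldsymbol{m}|$ times are ordered and group naturally into the $|\boldsymbol{m}|/2$ consecutive pairs $(t_{2r-1},t_{2r})$, $1\le r\le|\boldsymbol{m}|/2$. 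I would then split $D_{\boldsymbol{m}}$ according to which gaps $\sigma_j$ are ``short'' — comparable to the localization scale $\varepsilon^{1/(2H)}$, with a logarithmically wide window of scales in the critical case $1-2NH=H(2|\boldsymbol{k}|+d)$ — and which are ``long'', and call \emph{diagonal} the region in which precisely the $|\boldsymbol{m}|/2$ within-pair gaps $\sigma_{2r}$ are short. The claim is that the diagonal region carries the entire limit.

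The first step is to show that, after multiplying by $\big(\ell_{\varepsilon,H,d}^{(|\boldsymbol{k}|,N)}\big)^{|\boldsymbol{m}|}$, every non-diagonal region is $o(1)$. The tools are: strong local non-determinism (A), which through successive conditioning lets each $v_j$-integral be carried out coordinate by coordinate against $\exp(-c|v_j|^2\sigma_j^{2H})$; the estimate $|\widehat{f}(\varepsilon^{1/2}u_j)-\widehat{f}(0)|\le C_f(|\varepsilon^{1/2}u_j|\wedge1)^N$ from Lemma \ref{2lma1}, which together with Lemma \ref{2lma3} (and the technical Lemma \ref{5lma3}) converts each such integral into the explicit powers of $\varepsilon$ and of $b_i-a_i$ recorded there; and condition (C), which, when two increments are carried by well-separated pairs, bounds their covariance by $\psi(\eta)$ times the product of the two increment scales, so that any ``long'' gap genuinely separating two clusters of times produces an effectively decoupled integral whose $\varepsilon$-exponent is strictly subcritical. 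Matching exponents against the two defining formulas of $\ell_{\varepsilon,H,d}^{(|\boldsymbol{k}|,N)}$ then eliminates every region but the diagonal one.

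On the diagonal region I would introduce, for each pair $r$, the macroscopic coordinates $t_{2r-1}\in[a_{i(r)},b_{i(r)}]$ and $w_r:=u_{2r-1}+u_{2r}$, together with the microscopic coordinates $\sigma_{2r}$ and $u_{2r-1}$ (which, forced by the Gaussian localization, runs up to order $\sigma_{2r}^{-H}\sim\varepsilon^{-1/2}$, with $u_{2r}\approx-u_{2r-1}$). Using condition (B) to replace $\Var(X^1_{t_{2r}}-X^1_{t_{2r-1}})$ by $\sigma\,\sigma_{2r}^{2H}(1+o(1))$, condition (C) to discard the off-block-diagonal part of $\Var(\sum_j u_j\cdot X_{t_j})$, and (A) to keep the surviving Gaussian densities non-degenerate, the integral asymptotically factorizes into a product over $r$ of one-pair integrals — each equal to $D_{H,d}$ as written in (\ref{Dhd}), which is precisely where that formula, and the logarithm in the critical case (compare the evaluation of $D_{H,d,f}$ in (\ref{DHdf})), are produced — times the remaining $(|\boldsymbol{m}|/2)$-fold integral $\int_{\prod_i[a_i,b_i]^{m_i/2}}\int_{\R^{|\boldsymbol{m}|d/2}}\exp\big(\iota x\cdot\sum_j w_j-\tfrac12\Var(\sum_j w_j\cdot X_{t_j})\big)\,dw\,dt$ in the macroscopic variables. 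Tracking the remaining constants — $\boldsymbol{m}!$ from the restriction to $D_{\boldsymbol{m}}$, and $2^{-|\boldsymbol{m}|/2}(\boldsymbol{m}/2)!^{-1}$ together with the powers of $2\pi$ coming from re-symmetrizing over the $|\boldsymbol{m}|/2$ pairs — reproduces exactly the normalization in Lemma \ref{2lma4}, so the limit equals $(D_{H,d})^{|\boldsymbol{m}|/2}\,\E\big(\prod_i[W(L(b_i,x))-W(L(a_i,x))]^{m_i}\big)$.

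I expect the main obstacle to be the decoupling step: proving that on the diagonal region the full $|\boldsymbol{m}|\times|\boldsymbol{m}|$ covariance form $\Var(\sum_j u_j\cdot X_{t_j})$ becomes, in the limit, block-diagonal across the $|\boldsymbol{m}|/2$ pairs, with an error small \emph{uniformly} in the remaining integration variables so that the limit may be taken under the integral sign. This demands a quantitative use of (C) to bound all cross-pair covariances simultaneously, of (A) to keep the conditional variances bounded below as the within-pair gaps shrink, and — in the critical regime — a careful accounting of the logarithmic range of admissible scales for each pair, exactly paralleling the computation leading to (\ref{DHdf}). A secondary but necessary point is to exhibit an $\varepsilon$-independent integrable majorant on $D_{\boldsymbol{m}}\times\R^{|\boldsymbol{m}|d}$ that legitimizes the dominated-convergence argument; this is assembled, as in the proof of Proposition \ref{tightness}, from Lemmas \ref{2lma1}, \ref{2lma3}, \ref{5lma3} and the local non-determinism lower bound.
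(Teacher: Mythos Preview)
Your strategy is correct and parallels the paper's, but the paper packages the argument differently in one important respect. Rather than splitting $D_{\boldsymbol{m}}$ only by which time-gaps $\sigma_j$ are short, the paper introduces an auxiliary frequency parameter $\gamma>1$ and works on the product region $T_\varepsilon\times R_\gamma$, where $T_\varepsilon$ is essentially your ``diagonal'' set of times (the $\sigma_{2r}$ short relative to the scale $\ln^{-1/4}(1+\varepsilon^{-1/2})$) and $R_\gamma=\{|y_{2i}|\ge\gamma|y_{2j-1}|\text{ for all }i,j\}$ is a restriction in the increment frequencies $y_j$. On $R_\gamma$ the integrand simplifies cleanly via Lemma~\ref{2lma2}: the full product $\prod_j\prod_\ell(\iota y_{j\ell}-\iota y_{j+1,\ell})^{k_\ell}(\widehat f(\varepsilon^{1/2}(y_j-y_{j+1}))-\widehat f(0))$ is replaced by $\prod_r\prod_\ell|y_{2r,\ell}|^{2k_\ell}|\widehat f(\varepsilon^{1/2}y_{2r})-\widehat f(0)|^2$ up to an $O(\gamma^{-\delta})$ error. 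On $T_\varepsilon$ the full variance decouples via the dedicated Lemma~\ref{5lma4}, which packages your joint use of (A), (B), (C) --- exactly the obstacle you flag --- into a single quantitative estimate. The limit is then taken first in $\varepsilon$ (Steps~II and~III, which handle $D_{\boldsymbol{m}}\setminus T_\varepsilon$ and $\R^{|\boldsymbol{m}|d}\setminus R_\gamma$ via Lemmas~\ref{5lma2} and~\ref{5lma3}) and only afterward in $\gamma$. Your micro/macro change of variables $(t_{2r-1},w_r;\sigma_{2r},u_{2r-1})$ would reach the same conclusion, but the paper's two-parameter scheme sidesteps the need for a uniform dominated-convergence majorant on the diagonal: the $\gamma$-error is frozen while $\varepsilon\downarrow0$ and then disposed of separately.
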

In the rest of this section we will give the proof of these propositions.
\begin{proof}[\bf The proof of Proposition \ref{odd1}:]
Let $T=b_{b_n}$. Like the process in the proof of Proposition \ref{tightness}, we have 
\begin{align*}
	&\Big|\E \Big(\prod_{i=1}^n \big[F_{\varepsilon}(b_i) - F_{\varepsilon}(a_i)\big]^{m_i} \Big)\Big|\\\le& C^{(2)}_{4,1}\bigg(\ell_{\varepsilon,H,d}^{(|\boldsymbol{k}|,N)}\bigg)^{|\boldsymbol{m}|}\int_{[0,T]^{|\boldsymbol{m}|}_<}\int_{\R^{|\boldsymbol{m}|d}}\prod_{j=1}^{|\boldsymbol{m}|}\Big(|v_{j}|^{|\boldsymbol{k}|}\big(|\varepsilon^{\frac12}v_j|\wedge1\big)^N+|v_{j+1}|^{|\boldsymbol{k}|}\big(|\varepsilon^{\frac12}v_{j+1}|\wedge1\big)^N\Big)\\&\qquad\qquad\qquad\qquad\qquad\qquad\qquad\qquad\qquad\qquad\qquad\times\exp\Big(-\frac{\kappa_{H,|\boldsymbol{m}|}}{2}\sum_{j=1}^{|\boldsymbol{m}|}|v_j|^2\big({t_j}-{t_{j-1}}\big)^{2H}\Big)dvdt\\\le&C^{(2)}_{4,2}\bigg(\ell_{\varepsilon,H,d}^{(|\boldsymbol{k}|,N)}\bigg)^{|\boldsymbol{m}|}\int_{[0,T]^{|\boldsymbol{m}|}}\int_{\R^{|\boldsymbol{m}|d}}\sum_{\mS_{|\boldsymbol{m}|}}\prod_{j=1}^{|\boldsymbol{m}|}\Big(|v_{j}|^{|\boldsymbol{k}|}\big(|\varepsilon^{\frac12}v_j|\wedge1\big)^N\Big)^{\bar{p}_{j-1}+p_j}\exp\Big(-\frac{\kappa_{H,|\boldsymbol{m}|}}{2}\sum_{j=1}^{|\boldsymbol{m}|}|v_j|^2s_j^{2H}\Big)dvds,
\end{align*}
where is $\mS_{|\boldsymbol{m}|}$ defined like (\ref{mS}). Note that $\bar{p}_{j-1}+p_j\in\{0,1,2\}$ and $\display\sum_{j=1}^{|\boldsymbol{m}|}(\bar{p}_{j-1}+p_j)=|\boldsymbol{m}|$, using Lemma \ref{5lma3} in the case $a=|\boldsymbol{k}|,b=N$, we get for some $\delta>0$,
\begin{align*}
	\Big|\E \Big(\prod_{i=1}^n \big[F_{\varepsilon}(b_i) - F_{\varepsilon}(a_i)\big]^{m_i} \Big)\Big|\le C_{4,3}^{(2)}\begin{cases}
		\ln^{-\frac{1}{2}}(1+\varepsilon^{-\frac12}),&\textit{if }1-2NH=H(2|\boldsymbol{k}|+d);\\\varepsilon^{-\delta},&\textit{if }1-2NH<H(2|\boldsymbol{k}|+d)<1,
	\end{cases}
\end{align*}
because $|\boldsymbol{m}|$ is odd so that the number of $1$'s in $\{\bar{p}_{j-1}+p_j:j=1,2,\cdots,|\boldsymbol{m}|\}$ is greater than or equal to one, which completes the proof. 	
\end{proof}
\begin{proof}[\bf The proof of Proposition \ref{odd2}:]
Denote $i_0$ the smallest $i$ s.t. $m_i$ is odd. Then let $n_1=\sum\limits_{i=1}^{i_0}m_i$, $n_2=\sum\limits_{i=i_0+1}^{|\boldsymbol{m}|}m_i$, $T_1=b_{i_0}$, $T_2=a_{i_0+1}$, $T=b_{n}$
and $$D_{12}=\big\{t\in\R^{|\boldsymbol{m}|}:0<t_1<\cdots<t_{n_1}<T_1,T_2<t_{n_1+1}<\cdots<t_{|\boldsymbol{m}|}<T\}.$$
Recall Proposition \ref{tightness}, we only need to consider the case $b_{i_0}<a_{i_0+1}$ in the proof of Proposition \ref{odd2}. Like the proof of Proposition \ref{odd1}, we have
\begin{align*}
	&\Big|\E \Big(\prod_{i=1}^n \big[F_{\varepsilon}(b_i) - F_{\varepsilon}(a_i)\big]^{m_i} \Big)\Big|\\\le& C^{(3)}_{4,1}\bigg(\ell_{\varepsilon,H,d}^{(|\boldsymbol{k}|,N)}\bigg)^{|\boldsymbol{m}|}\int_{D_{12}}\int_{\R^{|\boldsymbol{m}|d}}\prod_{j=1}^{|\boldsymbol{m}|}\Big(|v_{j}|^{|\boldsymbol{k}|}\big(|\varepsilon^{\frac12}v_j|\wedge1\big)^N+|v_{j+1}|^{|\boldsymbol{k}|}\big(|\varepsilon^{\frac12}v_{j+1}|\wedge1\big)^N\Big)\\&\qquad\qquad\qquad\qquad\qquad\qquad\qquad\qquad\qquad\qquad\qquad\times\exp\Big(-\frac{\kappa_{H,|\boldsymbol{m}|}}{2}\sum_{j=1}^{|\boldsymbol{m}|}|v_j|^2\big({t_j}-{t_{j-1}}\big)^{2H}\Big)dvdt\\\le&C^{(3)}_{4,2}\bigg(\ell_{\varepsilon,H,d}^{(|\boldsymbol{k}|,N)}\bigg)^{|\boldsymbol{m}|}\int_{D_{12}}\int_{\R^{|\boldsymbol{m}|d}}\sum_{\mS_{|\boldsymbol{m}|}}\prod_{j=1}^{|\boldsymbol{m}|}\Big(|v_{j}|^{|\boldsymbol{k}|}\big(|\varepsilon^{\frac12}v_j|\wedge1\big)^N\Big)^{\bar{p}_{j-1}+p_j}\\&\qquad\qquad\qquad\qquad\qquad\qquad\qquad\qquad\qquad\qquad\qquad\times\exp\Big(-\frac{\kappa_{H,|\boldsymbol{m}|}}{2}\sum_{j=1}^{|\boldsymbol{m}|}|v_j|^2\big({t_j}-{t_{j-1}}\big)^{2H}\Big)dvdt.
\end{align*}
Because $|\boldsymbol{m}|$ is even, $\mS_{|\boldsymbol{m}|}$ can be divided into two disjoint parts
\begin{align*}
	\mS_{|\boldsymbol{m}|,0}=\mS_{|\boldsymbol{m}|}\cap\Big\{	p_j,\bar{p}_j:\bar{p}_j=0\textit{ when j even},\bar{p}_j=1\textit{ when j odd},p_j+\bar{p}_j=1,j=1,2,\cdots,m-1\Big\}
\end{align*}
and 
\begin{align*}
	\mS_{|\boldsymbol{m}|,1}=\mS_{|\boldsymbol{m}|}-\mS_{|\boldsymbol{m}|,0}.
\end{align*}
where the number of $1$'s in $\{\bar{p}_{j-1}+p_j:j=1,2,\cdots,|\boldsymbol{m}|\}$ is greater than or equal to one for $p_j,\bar{p}_j$'s in $\mS_{|\boldsymbol{m}|,1}$ and the number of $1$'s in $\{\bar{p}_{j-1}+p_j:j=1,2,\cdots,|\boldsymbol{m}|\}$ is zero for $p_j,\bar{p}_j$'s in $\mS_{|\boldsymbol{m}|,0}$. So that 
\begin{align}
	&\Big|\E \Big(\prod_{i=1}^n \big[F_{\varepsilon}(b_i) - F_{\varepsilon}(a_i)\big]^{m_i} \Big)\Big|\nonumber\\\le& C^{(3)}_{4,3}\bigg(\ell_{\varepsilon,H,d}^{(|\boldsymbol{k}|,N)}\bigg)^{|\boldsymbol{m}|}\int_{D_{12}}\int_{\R^{|\boldsymbol{m}|d}}\prod_{j=1}^{|\boldsymbol{m}|/2}\Big(|v_{2j}|^{|\boldsymbol{k}|}\big(|\varepsilon^{\frac12}v_{2j}|\wedge1\big)^N\Big)^2\exp\Big(-\frac{\kappa_{H,|\boldsymbol{m}|}}{2}\sum_{j=1}^{|\boldsymbol{m}|}|v_j|^2\big({t_j}-{t_{j-1}}\big)^{2H}\Big)dvdt\nonumber\\&\qquad\qquad\qquad\qquad\qquad\qquad+C^{(3)}_{4,3}\begin{cases}
		\ln^{-\frac{1}{2}}(1+\varepsilon^{-\frac12}),&\textit{if }1-2NH=H(2|\boldsymbol{k}|+d);\\\varepsilon^{-\delta},&\textit{if }1-2NH<H(2|\boldsymbol{k}|+d)<1,\label{division}
	\end{cases}
\end{align}
using Lemma \ref{5lma3} for those $p_j,\bar{p}_j$'s in $\mS_{|\boldsymbol{m}|,1}$. Note that 
\begin{align*}
	&\sup_{t_{n_1-1}\in(0,T_1),\atop t_{n_1+2}\in(T_2,T)}\int_{t_{n_1-1}}^{T_1}\int_{T_2}^{t_{n_1+2}}\int_{\R^{2d}}\Big(|v_{n_1+1}|^{|\boldsymbol{k}|}\big(|\varepsilon^{\frac12}v_{n_1+1}|\wedge1\big)^N\Big)^2\\&\qquad\qquad\qquad\qquad\qquad\qquad\qquad\times\exp\Big(-\frac{\kappa_{H,|\boldsymbol{m}|}}{2}\sum_{j=n_1}^{n_1+1}|v_j|^2\big({t_j}-{t_{j-1}}\big)^{2H}\Big)dv_{n_1+1}dv_{n_1}dt_{n_1}dt_{n_1+1}\\&\le \varepsilon^{N}C^{(3)}_{4,3}\sup_{t_{n_1-1}\in(0,T_1)}\int_{t_{n_1-1}}^{T_1}\int_{T_2}^{T}\int_{\R^{d}}|v_{n_1+1}|^{2|\boldsymbol{k}|+2N}\big(t_{n_1}-t_{n_1-1}\big)^{-Hd}\\&\qquad\qquad\qquad\qquad\qquad\qquad\qquad\times\exp\Big(-\frac{\kappa_{H,|\boldsymbol{m}|}}{2}|v_{n_1+1}|^2\big(T_2-T_1\big)^{2H}\Big)dv_{n_1+1}dt_{n_1}dt_{n_1+1}\\&\le\varepsilon^{N}C^{(3)}_{4,4}(T-T_2)\big(T_2-T_1\big)^{-H(2|\boldsymbol{k}|+2N+d)}\sup_{t_{n_1-1}\in(0,T_1)}\int_{t_{n_1-1}}^{T_1}\big(t_{n_1}-t_{n_1-1}\big)^{-Hd}dt_{n_1}\le \varepsilon^{N}C^{(3)}_{4,5},
\end{align*}
we get 
\begin{align*}
	&\bigg(\ell_{\varepsilon,H,d}^{(|\boldsymbol{k}|,N)}\bigg)^{|\boldsymbol{m}|}\int_{D_{12}}\int_{\R^{|\boldsymbol{m}|d}}\prod_{j=1}^{|\boldsymbol{m}|/2}\Big(|v_{2j}|^{|\boldsymbol{k}|}\big(|\varepsilon^{\frac12}v_{2j}|\wedge1\big)^N\Big)^2\exp\Big(-\frac{\kappa_{H,|\boldsymbol{m}|}}{2}\sum_{j=1}^{|\boldsymbol{m}|}|v_j|^2\big({t_j}-{t_{j-1}}\big)^{2H}\Big)dvdt\\\le&\varepsilon^{N}C^{(3)}_{4,6}\bigg(\ell_{\varepsilon,H,d}^{(|\boldsymbol{k}|,N)}\bigg)^{|\boldsymbol{m}|}\int_{[0,T]^{|\boldsymbol{m}|-2}}\int_{\R^{|\boldsymbol{m}|d}}\prod_{j=1,\atop j\ne\frac{n_1+1}{2}}^{|\boldsymbol{m}|/2}\Big(|v_{2j}|^{|\boldsymbol{k}|}\big(|\varepsilon^{\frac12}v_{2j}|\wedge1\big)^N\Big)^2\\&\qquad\qquad\qquad\qquad\qquad\qquad\qquad\qquad\qquad\times\exp\Big(-\frac{\kappa_{H,|\boldsymbol{m}|}}{2}\sum_{j=1,\atop j\ne n_1,n_1+1}^{|\boldsymbol{m}|}|v_j|^2\big({t_j}-{t_{j-1}}\big)^{2H}\Big)dvdt\\\le& \varepsilon^{N}C^{(3)}_{4,7}\bigg(\ell_{\varepsilon,H,d}^{(|\boldsymbol{k}|,N)}\bigg)^{2}=C^{(3)}_{4,7}\left\{\begin{array}{ll}
		\ln^{-1}(1+\varepsilon^{-\frac12}), &\textit{if } 1-2NH=H(2|\boldsymbol{k}|+d);\\
		\varepsilon^{\frac{1}{2}(2|\boldsymbol{k}|+2N+d-\frac{1}{H})}, &\textit{if } 1-2NH<H(2|\boldsymbol{k}|+d)<1;
	\end{array}\right.
\end{align*}
using Lemma \ref{2lma3}, which completes the proof of Proposition \ref{odd2}.	
\end{proof}

\begin{proof}[\bf The proof of Proposition \ref{alleven}:]
	Using coordinate transform $u_j=y_j-y_{j+1},j=1,2,\cdots,|\boldsymbol{m}|$, $y_{|\boldsymbol{m}|+1}=0$ let $t_0=0$, we get 
	\begin{align*}
		\E \Big(\prod_{i=1}^n \big[F_{\varepsilon}(b_i) -& F_{\varepsilon}(a_i)\big]^{m_i} \Big)=\bigg(\ell_{\varepsilon,H,d}^{(|\boldsymbol{k}|,N)}\bigg)^{|\boldsymbol{m}|}\frac{\boldsymbol{m}!}{(2\pi)^{|\boldsymbol{m}|d}}\int_{D_{\boldsymbol{m}}}\int_{\R^{|\boldsymbol{m}|d}}\Big( \prod^{|\boldsymbol{m}|}_{j=1}\prod_{\ell=1}^d\big(\iota y_{j\ell}-\iota y_{j+1,\ell}\big)^{k_\ell}\Big) \\&\times\Big( \prod^{|\boldsymbol{m}|}_{j=1} (\widehat{f}\big(\varepsilon^{\frac{1}{2}}(y_j-y_{j+1})\big)-\widehat{f}(0))\Big)\exp\Big(\iota x\cdot y_1-\frac{1}{2}\Var\Big(\sum^{|\boldsymbol{m}|}_{j=1}y_j\cdot \big(X_{t_j}-X_{t_{j-1}}\big)\Big)\Big)dydt.
	\end{align*}
	
	Let $t_0=t_{-1}=0$. Given $\gamma>1$ and $\varepsilon>0$, denote 
	\begin{align*}
		R_{\gamma}=\Big\{(y_1,y_2,\cdots,y_{|\boldsymbol{m}|})\in\R^{|\boldsymbol{m}|d}:|y_{2i}|\ge\gamma|y_{2j-1}|,\textit{ for any }i,j=1,2,\cdots,\frac{|\boldsymbol{m}|}{2}\Big\},
	\end{align*}
	\begin{align*}
		T_{\varepsilon}=\Big\{(t_1,t_2,\cdots,t_{|\boldsymbol{m}|})\in D_{\boldsymbol{m}}:\frac{t_{2j}-t_{2j-1}}{t_{2i-1}-t_{2i-3}}\le \ln^{-\frac14}(1+\varepsilon^{-\frac{1}{2}}),\textit{ for any }i,j=1,2,\cdots,\frac{|\boldsymbol{m}|}{2}\Big\}
	\end{align*}
	and 
	\begin{align*}
		J_{\gamma,\varepsilon}=\bigg(\ell_{\varepsilon,H,d}^{(|\boldsymbol{k}|,N)}\bigg)^{|\boldsymbol{m}|}\frac{\boldsymbol{m}!}{(2\pi)^{|\boldsymbol{m}|d}}\int_{T_\varepsilon}\int_{R_\gamma} \prod^{|\boldsymbol{m}|/2}_{j=1}&\Big(\prod_{\ell=1}^d\big|y_{2j,\ell}\big|^{2k_\ell}\Big)\big|\widehat{f}\big(\varepsilon^{\frac{1}{2}}y_{2j}\big)-\widehat{f}(0)\big|^2\\&\times\exp\Big(\iota x\cdot y_1-\frac{1}{2}\Var\Big(\sum^{|\boldsymbol{m}|}_{j=1}y_j\cdot \big(X_{t_j}-X_{t_{j-1}}\big)\Big)\Big)dydt.
	\end{align*}
	Note that for $\varepsilon$ large enough in $T_\varepsilon$, we have 
	\begin{align*}
		t_{2j}-t_{2j-1}\le(t_{2i-1}-t_{2i-2}+t_{2i-2}-t_{2i-3})\ln^{-\frac14}(1+\varepsilon^{-\frac{1}{2}})\le3(t_{2i-1}-t_{2i-2})\ln^{-\frac14}(1+\varepsilon^{-\frac{1}{2}})
	\end{align*}
	for any $i,j=1,2,\cdots,\frac{|\boldsymbol{m}|}{2}$, which means that from Lemma \ref{5lma4}, in those $T_\varepsilon$, we can write
	\begin{align*}
		\Var\Big(\sum^{|\boldsymbol{m}|}_{j=1}y_j\cdot \big(X_{t_j}-X_{t_{j-1}}\big)\Big)=\big(1+\bar{\alpha}(\varepsilon)\big)\Big(\mathrm{Var}\Big(\sum_{j=1}^{|\boldsymbol{m}|/2}{y_{2j-1}}\cdot(X_{t_{2j-1}}-X_{t_{2j-3}})\Big)+\sigma\sum_{j=1}^{|\boldsymbol{m}|/2}|y_{2j}|^2(\Delta{t_{2j}})^{2H}\Big),
	\end{align*}
	where $\bar{\alpha}(\varepsilon)$ converges to $0$ as $\varepsilon\downarrow0$. Moreover, $T_\varepsilon$ and $R_\gamma$ can be expressed as 
	\begin{align*}
		R_{\gamma}=\Big\{(y_1,y_2,\cdots,y_{|\boldsymbol{m}|})\in\R^{|\boldsymbol{m}|d}:|y_{2i}|\ge\gamma\min_{1\le j\le |\boldsymbol{m}|/2}|y_{2j-1}|,\textit{ for any }i=1,2,\cdots,\frac{|\boldsymbol{m}|}{2}\Big\},
	\end{align*}
	\begin{align*}
		T_{\varepsilon}=\Big\{(t_1,t_2,\cdots,t_{|\boldsymbol{m}|})\in D_{\boldsymbol{m}}:t_{2j}-t_{2j-1}\le \frac{\max\limits_{1\le i\le\frac{|\boldsymbol{m}|}{2}}(t_{2i-1}-t_{2i-3})}{\ln^{\frac14}(1+\varepsilon^{-\frac{1}{2}})},\textit{ for any }i,j=1,2,\cdots,\frac{|\boldsymbol{m}|}{2}\Big\},
	\end{align*}
	using coordinate transform $s_{j}=t_{2j}-t_{2j-1}$ for any $j=1,2,\cdots,\frac{|\boldsymbol{m}|}{2}$, Lemma \ref{5lma1} and \ref{5lma2}, it is easy to get 
	\begin{align*}
		\lim\limits_{\varepsilon\downarrow0}J_{\gamma,\varepsilon}&=\Big(D_{H,d}\Big)^{\frac{|\boldsymbol{m}|}{2}}\bigg(\frac{\boldsymbol{m}!}{ 2^{\frac{|\boldsymbol{m}|}{2}}   (2\pi )^{\frac{|\boldsymbol{m}|d}{2} } } \bigg)\displaystyle \int_{D_{\frac{|\boldsymbol{m}|}{2}}}\displaystyle \int_{\mathbb{R}^{\frac{|\mathbf{m}|d}{2}}}  \exp\Big(\iota  x\cdot y_1-\frac{1}{2}\mathrm{Var}\Big(\sum_{j=1}^{|\boldsymbol{m}|/2}{y_{2j-1}}\cdot(X_{t_{2j-1}}-X_{t_{2j-3}})\Big)\Big)dydt	\\&=\Big(D_{H,d}\Big)^{\frac{|\boldsymbol{m}|}{2}}\bigg(\frac{\boldsymbol{m}!}{ 2^{\frac{|\boldsymbol{m}|}{2}}   (2\pi )^{\frac{|\boldsymbol{m}|d}{2} }   (\frac{\boldsymbol{m}}{2})!} \bigg)\displaystyle \int_{\prod\limits_{i=1}^n [a_i ,b_i]^{\frac{m_i}{2}} }\displaystyle \int_{\mathbb{R}^{\frac{|\mathbf{m}|d}{2}}}  \exp\Big(\iota  x\cdot \sum\limits^{|\boldsymbol{m}|/2}_{j=1}u_{j}-\frac{1}{2}\Var\big(\sum\limits^{|\boldsymbol{m}|/2}_{j=1}u_j\cdot X_{t_{2j-1}}\big)\Big)dudt\\&=\Big(D_{H,d}\Big)^{\frac{|\boldsymbol{m}|}{2}}\E \Big(\prod_{i=1}^n \big[W (L(b_i,x) ) - W (L (a_i,x))\big]^{m_i} \Big).
	\end{align*}
	So we only need to consider the upper bound of $\display\Big|\E \Big(\prod_{i=1}^n \big[F_{\varepsilon}(b_i) -F_{\varepsilon}(a_i)\big]^{m_i} \Big)-J_{\gamma,\varepsilon}\Big|$. We divide the procedure into three steps:
	
	{\bf Step I: } Denote 
	\begin{align*}
		J_{\gamma,\varepsilon,1}=\bigg(\ell_{\varepsilon,H,d}^{(|\boldsymbol{k}|,N)}\bigg)^{|\boldsymbol{m}|}\frac{\boldsymbol{m}!}{(2\pi)^{|\boldsymbol{m}|d}}\int_{T_\varepsilon}\int_{R_\gamma} \Big( \prod^{|\boldsymbol{m}|}_{j=1}\prod_{\ell=1}^d\big(\iota y_{j\ell}&-\iota y_{j+1,\ell}\big)^{k_\ell}\Big) \Big( \prod^{|\boldsymbol{m}|}_{j=1} (\widehat{f}\big(\varepsilon^{\frac{1}{2}}(y_j-y_{j+1})\big)-\widehat{f}(0))\Big)\\&\times\exp\Big(\iota x\cdot y_1-\frac{1}{2}\Var\Big(\sum^{|\boldsymbol{m}|}_{j=1}y_j\cdot \big(X_{t_j}-X_{t_{j-1}}\big)\Big)\Big)dydt.
	\end{align*}
	Using Lemma \ref{2lma2}, on $R_\gamma$ we have 
	\begin{align*}
		&\Big|\Big( \prod^{|\boldsymbol{m}|}_{j=1}\prod_{\ell=1}^d\big(\iota y_{j\ell}-\iota y_{j+1,\ell}\big)^{k_\ell}\Big) \Big( \prod^{|\boldsymbol{m}|}_{j=1} (\widehat{f}\big(\varepsilon^{\frac{1}{2}}(y_j-y_{j+1})\big)-\widehat{f}(0))\Big)-\prod^{|\boldsymbol{m}|/2}_{j=1}\Big(\prod_{\ell=1}^d\big|y_{2j,\ell}\big|^{2k_\ell}\Big)\big|\widehat{f}\big(\varepsilon^{\frac{1}{2}}y_{2j}\big)-\widehat{f}(0)\big|^2\Big|\\\le&C^{(4)}_{4,1}\sum_{j=1}^{|\boldsymbol{m}|/2}\Big(|y_{2j}|^{2|\boldsymbol{k}|}\big(\frac{|\varepsilon^{\frac{1}{2}}y_{2j}|^{2N}}{\gamma}\wedge1\big)+\frac{1}{\gamma}|y_{2j}|^{2|\boldsymbol{k}|}(|\varepsilon^{\frac{1}{2}}y_{2j}|^{2N}\wedge1)\Big)\times\prod_{i\ne j}|y_{2i}|^{|\boldsymbol{k}|}(|\varepsilon^{\frac{1}{2}}y_{2i}|\wedge1)^{2N},
	\end{align*}
	where using local non-determinism (\ref{local}), coordinate transform $s_j=t_{j}-t_{j-1},j=1,2,\cdots,|\boldsymbol{m}|$ with $t_0=0$ and Lemma \ref{2lma3}, we get 
	\begin{align}\label{delta1}
		\Big|J_{\gamma,\varepsilon,1}-J_{\gamma,\varepsilon}\Big|\le C^{(4)}_{4,2}\bigg(\ell_{\varepsilon,H,d}^{(|\boldsymbol{k}|,N)}\bigg)^{2}\sum_{j=1}^{|\boldsymbol{m}|/2}\int_{[0,T]^2}\int_{\R^{2d}}\Big(|y_{2j}|^{2|\boldsymbol{k}|}\big(\frac{|\varepsilon^{\frac{1}{2}}y_{2j}|^{2N}}{\gamma}\wedge1\big)+\frac{1}{\gamma}|y_{2j}|^{2|\boldsymbol{k}|}(|\varepsilon^{\frac{1}{2}}y_{2j}|^{2N}\wedge1)\Big)\nonumber\\\times\exp\Big(-\frac{\kappa_{H,|\boldsymbol{m}|}}{2}|y_{2j}|^2s^{2H}_{2j}\Big)dy_{2j}ds_{2j}\le C^{(4)}_{4,3}\Big(\frac{1}{\gamma^{(\frac{1}{2NH}-\frac{|\boldsymbol{k}|}{N}-\frac{d}{2N})\wedge1}}+\frac{1}{\gamma}\Big),
	\end{align}
	where $T=b_N$.
	
	{\bf Step II: } Denote 
	\begin{align*}
		J_{\gamma,\varepsilon,2}=\bigg(\ell_{\varepsilon,H,d}^{(|\boldsymbol{k}|,N)}\bigg)^{|\boldsymbol{m}|}\frac{\boldsymbol{m}!}{(2\pi)^{|\boldsymbol{m}|d}}\int_{D_{\boldsymbol{m}}}\int_{R_\gamma} \Big( \prod^{|\boldsymbol{m}|}_{j=1}\prod_{\ell=1}^d\big(\iota y_{j\ell}&-\iota y_{j+1,\ell}\big)^{k_\ell}\Big) \Big( \prod^{|\boldsymbol{m}|}_{j=1} (\widehat{f}\big(\varepsilon^{\frac{1}{2}}(y_j-y_{j+1})\big)-\widehat{f}(0))\Big)\\&\times\exp\Big(\iota x\cdot y_1-\frac{1}{2}\Var\Big(\sum^{|\boldsymbol{m}|}_{j=1}y_j\cdot \big(X_{t_j}-X_{t_{j-1}}\big)\Big)\Big)dydt.
	\end{align*}
	Note that for $t_0=0$, 
	\begin{align*}
		D_{\boldsymbol{m}}-T_\varepsilon\subseteq\bigcup_{i,j=1}^{\frac{|\boldsymbol{m}|}{2}}\Big\{(t_1,t_2,\cdots,t_{|\boldsymbol{m}|})\in D_{\boldsymbol{m}}:t_{2i}-t_{2i-1}\ge\ln^{-\frac{1}{2}}(1+\varepsilon^{-\frac{1}{2}})\textit{ or }t_{2j-1}-t_{2j-2}\le\ln^{-\frac{1}{4}}(1+\varepsilon^{-\frac{1}{2}})\Big\}
	\end{align*}
	and on $R_\gamma$,
	\begin{align*}
		\Big|\Big( \prod^{|\boldsymbol{m}|}_{j=1}\prod_{\ell=1}^d\big(\iota y_{j\ell}&-\iota y_{j+1,\ell}\big)^{k_\ell}\Big) \Big( \prod^{|\boldsymbol{m}|}_{j=1} (\widehat{f}\big(\varepsilon^{\frac{1}{2}}(y_j-y_{j+1})\big)-\widehat{f}(0))\Big)\Big|\le C^{(4)}_{4,4}\prod_{j=1}^{|\boldsymbol{m}|/2}|y_{2j}|^{2|\boldsymbol{k}|}\big(|\varepsilon^{\frac12}y_{2j}|\wedge1\big)^{2N}.
	\end{align*}
	Using local non-determinism (\ref{local}), coordinate transform $s_j=t_{j}-t_{j-1},j=1,2,\cdots,|\boldsymbol{m}|$ with $t_0=0$ and Lemma \ref{2lma3}, we get 
	\begin{align*}
		\Big|J_{\gamma,\varepsilon,1}-&J_{\gamma,\varepsilon,2}\Big|\le C^{(4)}_{4,5}\sum_{j=1}^{|\boldsymbol{m}|/2}\int_{0}^{\ln^{-\frac14}(1+\varepsilon^{-\frac12})}\int_{\R^d}\exp\Big(-\frac{\kappa_{H,|\boldsymbol{m}|}}{2}|y_{2j}|^2s^{2H}_{2j}\Big)dy_{2j}ds_{2j}\\+& C^{(4)}_{4,5}\bigg(\ell_{\varepsilon,H,d}^{(|\boldsymbol{k}|,N)}\bigg)^{2}\sum_{j=1}^{|\boldsymbol{m}|/2}\int_{\ln^{-\frac12}(1+\varepsilon^{-\frac12})}^{T}\int_{\R^d}|y_{2j}|^{2|\boldsymbol{k}|}\big(|\varepsilon^{\frac12}y_{2j}|\wedge1\big)^{2N}\exp\Big(-\frac{\kappa_{H,|\boldsymbol{m}|}}{2}|y_{2j}|^2s^{2H}_{2j}\Big)dy_{2j}ds_{2j},
	\end{align*}
	where using Lemma \ref{5lma2}, we get that 
	\begin{align}\label{delta2}
		\lim\limits_{\varepsilon\downarrow0}\Big|J_{\gamma,\varepsilon,1}-J_{\gamma,\varepsilon,2}\Big|=0.
	\end{align} 
	
	{\bf Step III: } 
	Like (\ref{division}), we can get that 
	\begin{align*}
		&\Big|\E \Big(\prod_{i=1}^n \big[F_{\varepsilon}(b_i) - F_{\varepsilon}(a_i)\big]^{m_i} \Big)-J_{\gamma,\varepsilon,2}\Big|\\\le&C^{(4)}_{4,6}\bigg(\ell_{\varepsilon,H,d}^{(|\boldsymbol{k}|,N)}\bigg)^{|\boldsymbol{m}|}\int_{D_{\boldsymbol{m}}}\int_{\R^{|\boldsymbol{m}|d}-R_\gamma}\prod_{j=1}^{|\boldsymbol{m}|/2}\Big(|y_{2j}|^{|\boldsymbol{k}|}\big(|\varepsilon^{\frac12}y_{2j}|\wedge1\big)^N\Big)^2\\&\qquad\qquad\qquad\quad\qquad\qquad\qquad\qquad\times\exp\Big(-\frac{\kappa_{H,|\boldsymbol{m}|}}{2}\sum_{j=1}^{|\boldsymbol{m}|}|y_j|^2\big({t_j}-{t_{j-1}}\big)^{2H}\Big)dydt\\&\qquad\qquad\qquad\qquad\qquad\qquad+C^{(4)}_{4,6}\begin{cases}
			\ln^{-\frac{1}{2}}(1+\varepsilon^{-\frac12}),&\textit{if }1-2NH=H(2|\boldsymbol{k}|+d);\\\varepsilon^{-\delta},&\textit{if }1-2NH<H(2|\boldsymbol{k}|+d)<1,
		\end{cases}
	\end{align*}
	Note that 
	\begin{align*}
		\R^{|\boldsymbol{m}|d}-R_\gamma\subseteq \bigcup_{i,j=1}^{|\boldsymbol{m}|/2}\Big\{(y_1,y_2,\cdots,y_{|\boldsymbol{m}|})\in\R^{|\boldsymbol{m}|d}:|y_{2j}|\le\gamma|y_{2i-1}|\Big\}.
	\end{align*}
	Using coordinate transform $s_j=t_j-t_{j-1},j=1,2,\cdots,|\boldsymbol{m}|$ with $t_0=0$, 
	\begin{align*}
		&\bigg(\ell_{\varepsilon,H,d}^{(|\boldsymbol{k}|,N)}\bigg)^{|\boldsymbol{m}|}\int_{D_{\boldsymbol{m}}}\int_{\R^{|\boldsymbol{m}|d}-R_\gamma}\prod_{j=1}^{|\boldsymbol{m}|/2}\Big(|y_{2j}|^{|\boldsymbol{k}|}\big(|\varepsilon^{\frac12}y_{2j}|\wedge1\big)^N\Big)^2\exp\Big(-\frac{\kappa_{H,|\boldsymbol{m}|}}{2}\sum_{j=1}^{|\boldsymbol{m}|}|y_j|^2\big(t_{j}-t_{j-1}\big)^{2H}\Big)dydt\\	\le&\gamma^{|\boldsymbol{k}|+N}\bigg(\ell_{\varepsilon,H,d}^{(|\boldsymbol{k}|,N)}\bigg)^{|\boldsymbol{m}|}\sum_{i_0,j_0=1}^{|\boldsymbol{m}|/2}\int_{[0,T]^{|\boldsymbol{m}|}}\int_{\R^{|\boldsymbol{m}|d}}\prod_{j=1\atop j\ne j_0}^{|\boldsymbol{m}|/2}\Big(|y_{2j}|^{|\boldsymbol{k}|}\big(|\varepsilon^{\frac12}y_{2j}|\wedge1\big)^N\Big)^2\Big(|y_{2j_0}|^{|\boldsymbol{k}|}\big(|\varepsilon^{\frac12}y_{2j_0}|\wedge1\big)^N\Big)\\&\qquad\qquad\qquad\qquad\qquad\qquad\qquad\qquad\times\Big(|y_{2i_0-1}|^{|\boldsymbol{k}|}\big(|\varepsilon^{\frac12}y_{2i_0-1}|\wedge1\big)^N\Big)\exp\Big(-\frac{\kappa_{H,|\boldsymbol{m}|}}{2}\sum_{j=1}^{|\boldsymbol{m}|}|y_j|^2s_j^{2H}\Big)dyds\\&\qquad\qquad\qquad\qquad\qquad\qquad\qquad\qquad\le C^{(4)}_{4,6,\gamma}\begin{cases}
			\ln^{-\frac{1}{2}}(1+\varepsilon^{-\frac12}),&\textit{if }1-2NH=H(2|\boldsymbol{k}|+d);\\\varepsilon^{-\delta},&\textit{if }1-2NH<H(2|\boldsymbol{k}|+d)<1,
		\end{cases}
	\end{align*}
	where the last inequality comes from Lemma \ref{5lma3}. So that given $\gamma$, we have 
	\begin{align}\label{delta3}
		\lim\limits_{\varepsilon\downarrow0}\Big|\E \Big(\prod_{i=1}^n \big[F_{\varepsilon}(b_i) - F_{\varepsilon}(a_i)\big]^{m_i} \Big)-J_{\gamma,\varepsilon,2}\Big|=0,
	\end{align}
	Finally, Proposition \ref{alleven} comes from (\ref{delta1}), (\ref{delta2}) and (\ref{delta3}).
\end{proof}
\section{Technical Lemmas}
 In this section we present the proofs of some technical lemmas which are used in the proof of Theorem \ref{thmdis}. Given $H>0$ and $d\ge1$, denote \begin{align*}
	\ell_{\varepsilon,H,d}^{(a,b)}=\begin{cases}
		\varepsilon^{-\frac{b}{2}}\ln^{-\frac12}(1+\varepsilon^{-\frac12}), &\textit{if } 1-2Hb=H(2a+d);\\
		\varepsilon^{\frac{1}{4}(2a+d-\frac{1}{H})}, &\textit{if } 1-2Hb<H(2a+d)<1.
	\end{cases}
\end{align*}
for any $a\ge0$, $b>0$ and $\varepsilon>0$. Then we have the following lemmas.
\begin{lemma}\label{5lma1}
	Assume $n\ge1$, $d\ge1$, $f\in\widetilde{\mS}_{d,n}$, $\sigma>0$, $M\ge0$, $\alpha(\varepsilon)$ a function satisfying $\displaystyle\lim\limits_{\varepsilon\downarrow0}\alpha(\varepsilon)=0$ and $k_\ell\ge0$, $\ell=1,2,\cdots,d$ with $\display|\boldsymbol{k}|=\sum_{\ell=1}^{d}k_\ell$. Then given $T>0$,
	\begin{align*}
		\lim\limits_{\varepsilon\downarrow0}&\bigg(	\ell_{\varepsilon,H,d}^{(|\boldsymbol{k}|,n)}\bigg)^2\int_{0}^{T}\int_{|x|\ge M}\big|\widehat{f}(\varepsilon^{\frac{1}{2}}x)-\widehat{f}(0)\big|^2\Big(\prod_{\ell=1}^{d}|x_\ell|^{2k_\ell}\Big)e^{-\frac{1}{2}(\sigma+\alpha(\varepsilon))|x|^2s^{2H}}dxds\\&=\begin{cases}
		\display\frac{1}{H\sigma^{\frac{1}{2H}}}\int_{\R^d}\Big|\int_{\R^d}f(z)\frac{1}{n!}\big(z\cdot x\big)^ndz\Big|^2\Big(\prod_{\ell=1}^{d}|x_\ell|^{2k_\ell}\Big)e^{-\frac{1}{2}|x|^2}dx,	&\text{if }H(2|\boldsymbol{k}|+d)=1-2nH;\\
		\display\frac{1}{\sigma^{\frac{1}{2H}}}\int_{0}^{\infty}\int_{\R^d}\big|\widehat{f}(x)-\widehat{f}(0)\big|^2\Big(\prod_{\ell=1}^{d}|x_\ell|^{2k_\ell}\Big)e^{-\frac{1}{2}|x|^2s^{2H}}dxds, &\text{if }1-2nH<H(2|\boldsymbol{k}|+d)<1.
		\end{cases}
	\end{align*}
\end{lemma}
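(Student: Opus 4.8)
Throughout write $g(y)=\big|\widehat f(y)-\widehat f(0)\big|^2\prod_{\ell=1}^d|y_\ell|^{2k_\ell}$. The plan is to collapse both cases to one rescaled integral by substituting $y=\varepsilon^{1/2}x$ and then $s=\varepsilon^{1/(2H)}\tau$; this turns the exponent into $e^{-\frac12(\sigma+\alpha(\varepsilon))|y|^2\tau^{2H}}$ and the domain $\{|x|\ge M\}\times[0,T]$ into $\{|y|\ge\varepsilon^{1/2}M\}\times[0,\varepsilon^{-1/(2H)}T]$, which increases to $\R^d\times[0,\infty)$. Counting the powers of $\varepsilon$ produced by the Jacobian ($\varepsilon^{-d/2}$ from $dx$, $\varepsilon^{-|\boldsymbol k|}$ from the weight, $\varepsilon^{1/(2H)}$ from $ds$) and using the relation between $H(2|\boldsymbol k|+d)$ and $1-2nH$, one checks that $\big(\ell_{\varepsilon,H,d}^{(|\boldsymbol k|,n)}\big)^2$ times this factor equals $1$ in the sub‑critical case and $\ln^{-1}(1+\varepsilon^{-1/2})$ in the critical case. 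The two analytic inputs are: Lemma \ref{2lma1}, giving $g(y)\le C_f(|y|\wedge1)^{2n}\prod_\ell|y_\ell|^{2k_\ell}$; and the vanishing of the moments $\int x_{\ell_1}\cdots x_{\ell_{n-1}}f(x)\,dx$ for $f\in\widetilde{\mS}_{d,n}$, which makes the Taylor expansion of $\widehat f$ at the origin vanish to order $n-1$, so that $\widehat f(y)-\widehat f(0)=\frac{\iota^n}{n!}\int_{\R^d}f(z)(z\cdot y)^n\,dz+O(|y|^{n+1})$ as $y\to0$.

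For the sub‑critical case $1-2nH<H(2|\boldsymbol k|+d)<1$, after the substitutions the quantity is $\int_0^{\varepsilon^{-1/(2H)}T}\!\int_{|y|\ge\varepsilon^{1/2}M} g(y)\,e^{-\frac12(\sigma+\alpha(\varepsilon))|y|^2\tau^{2H}}\,dy\,d\tau$. I would pass to the limit by dominated convergence: for $\varepsilon$ small $\sigma+\alpha(\varepsilon)\ge\sigma/2$, so the integrand is dominated by $g(y)e^{-\frac\sigma4|y|^2\tau^{2H}}$, and integrating $\tau$ first this is a constant times $\int_{\R^d}g(y)|y|^{-1/H}\,dy$, which is finite exactly because $g(y)|y|^{-1/H}\lesssim|y|^{2n+2|\boldsymbol k|-1/H}$ near $0$ (integrable since $H(2n+2|\boldsymbol k|+d)>1$, i.e. $1-2nH<H(2|\boldsymbol k|+d)$) and $\lesssim|y|^{2|\boldsymbol k|-1/H}$ at infinity (integrable since $H(2|\boldsymbol k|+d)<1$). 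Thus the limit is $\int_0^\infty\!\int_{\R^d}g(y)e^{-\frac\sigma2|y|^2\tau^{2H}}\,dy\,d\tau$, and the substitution $\tau=\sigma^{-1/(2H)}s$ produces the stated expression $\tfrac1{\sigma^{1/(2H)}}\int_0^\infty\!\int_{\R^d}\big|\widehat f(x)-\widehat f(0)\big|^2\prod_\ell|x_\ell|^{2k_\ell}e^{-\frac12|x|^2s^{2H}}dx\,ds$.

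The critical case $H(2|\boldsymbol k|+d)=1-2nH$ is the main obstacle, since now $\int_0^\infty\!\int_{\R^d}g(y)e^{-\frac\sigma2|y|^2\tau^{2H}}dy\,d\tau$ diverges logarithmically at $y=0$ and the factor $\ln^{-1}(1+\varepsilon^{-1/2})$ must capture precisely that rate. Here I would fix $\delta>0$ and split the $y$‑integral into $\{|y|>\delta\}$, where the rescaled integral is bounded uniformly in $\varepsilon$ by $\int_0^\infty\!\int_{|y|>\delta}g(y)e^{-\frac\sigma4|y|^2\tau^{2H}}dy\,d\tau<\infty$ (finite because $H(2|\boldsymbol k|+d)<1$), so it vanishes after multiplication by $\ln^{-1}(1+\varepsilon^{-1/2})$, and $\{\varepsilon^{1/2}M\le|y|\le\delta\}$. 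On the latter I would pass to polar coordinates $y=\rho\omega$, carry out the $\tau$‑integral via $v=\rho^{1/H}\tau$, replace $g(\rho\omega)$ by its leading homogeneous part $\rho^{2n+2|\boldsymbol k|}h(\omega)$ with $h(\omega)=\big|\tfrac1{n!}\int_{\R^d}f(z)(z\cdot\omega)^n\,dz\big|^2\prod_\ell|\omega_\ell|^{2k_\ell}$ — the remainder being $O(\rho^{2n+2|\boldsymbol k|+1})$, whose contribution is $O(\delta)$ uniformly in $\varepsilon$ once $2n+2|\boldsymbol k|+d=1/H$ is used — and then substitute $r=\rho\varepsilon^{-1/2}$. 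By $2n+2|\boldsymbol k|+d=1/H$ the $\rho$‑weight collapses to $r^{-1}$, leaving $\int_{M}^{\delta\varepsilon^{-1/2}}r^{-1}\big(\int_0^{r^{1/H}T}e^{-\frac12(\sigma+\alpha(\varepsilon))v^{2H}}dv\big)dr$ (when $M=0$ the inner factor supplies the regularisation near $r=0$); since the inner integral tends to $\sigma^{-1/(2H)}\int_0^\infty e^{-\frac12v^{2H}}dv$ as $r\to\infty$ while $\int_{M}^{\delta\varepsilon^{-1/2}}r^{-1}dr\sim\ln(\varepsilon^{-1/2})\sim\ln(1+\varepsilon^{-1/2})$, a routine cutoff argument gives that $\ln^{-1}(1+\varepsilon^{-1/2})$ times this quantity converges to $\sigma^{-1/(2H)}\int_0^\infty e^{-\frac12v^{2H}}dv$, independently of $\delta$. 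Multiplying by $\int_{S^{d-1}}h(\omega)\,d\omega$ and using $\int_0^\infty e^{-v^{2H}/2}dv=\tfrac1H\int_0^\infty\rho^{1/H-1}e^{-\rho^2/2}d\rho$ (substitution $\rho=v^H$) together with $2n+2|\boldsymbol k|+d-1=\tfrac1H-1$ to rewrite the product as $\tfrac1{H\sigma^{1/(2H)}}\int_{\R^d}\big|\tfrac1{n!}\int_{\R^d}f(z)(z\cdot x)^n dz\big|^2\prod_\ell|x_\ell|^{2k_\ell}e^{-\frac12|x|^2}dx$ yields the claimed limit. The delicate points are the control of the Taylor remainder and of the convergence of the inner $\tau$‑integral, both uniform in $\varepsilon$ and $\delta$, so that the limits $\varepsilon\downarrow0$ and $\delta\downarrow0$ can be taken in the correct order.
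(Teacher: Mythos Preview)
Your argument is correct and, in the sub-critical case, coincides with the paper's (the paper merely writes ``$\widetilde T^\sigma_\varepsilon\to\infty$'' after the same rescaling, leaving the dominated-convergence details you spell out to the reader). In the critical case, however, your route genuinely differs. The paper first disposes of the restriction $|x|\ge M$ by bounding the complementary piece $|x|\le M$ directly via Lemma~\ref{2lma1} (it is $O(\ln^{-1}(1+\varepsilon^{-1/2}))$), then performs the change of variables $s\mapsto (\sigma+\alpha(\varepsilon))^{1/(2H)}\varepsilon^{-1/(2H)}s$ to reduce to
\[
\ln^{-1}(1+R)\int_0^R\!\int_{\R^d}\big|\widehat f(x)-\widehat f(0)\big|^2\Big(\prod_\ell|x_\ell|^{2k_\ell}\Big)e^{-\frac12|x|^2s^{2H}}dx\,ds,\qquad R=\widetilde T^\sigma_\varepsilon\to\infty,
\]
and applies L'H\^opital in $R$: the derivative of the numerator is the inner integral at $s=R$, which after the dilation $x\mapsto R^{-H}x$ and the identity $1-H(2|\boldsymbol k|+d)=2nH$ reduces to $R^{-1}$ times $\int_{\R^d}\big|R^{nH}(\widehat f(R^{-H}x)-\widehat f(0))\big|^2\prod_\ell|x_\ell|^{2k_\ell}e^{-|x|^2/2}dx$, whence the Taylor limit gives the answer in one stroke. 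Your approach instead localises in the $y$-variable, passes to polar coordinates, and identifies the logarithm as coming from an explicit $r^{-1}$ weight after the critical scaling collapse $2n+2|\boldsymbol k|+d=1/H$. The L'H\^opital argument is shorter and avoids the two-parameter $(\varepsilon,\delta)$ limit, while your computation is more elementary and makes transparent both where the logarithm originates and why the limit depends only on the $n$-th Taylor coefficient of $\widehat f$ at the origin; it also absorbs the constraint $|x|\ge M$ automatically through the rescaling $|y|\ge\varepsilon^{1/2}M\to0$, rather than requiring a separate estimate.
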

\begin{proof}
Denote $\widetilde{T}^{\sigma}_\varepsilon=\big(\sigma+\alpha(\varepsilon)\big)^{\frac{1}{2H}}\varepsilon^{-\frac{1}{2H}}T$. We have 	
\begin{align*}
	&\int_{0}^{T}\int_{|x|\ge M}\big|\widehat{f}(\varepsilon^{\frac{1}{2}}x)-\widehat{f}(0)\big|^2\Big(\prod_{\ell=1}^{d}|x_\ell|^{2k_\ell}\Big)e^{-\frac{1}{2}(\sigma+\alpha(\varepsilon))|x|^2s^{2H}}dxds\\=&\varepsilon^{\frac{1}{2}(2k+d-\frac{1}{H})}\big(\sigma+\alpha(\varepsilon)\big)^{-\frac{1}{2H}}\int_{0}^{\widetilde{T}^{\sigma}_\varepsilon}\int_{|x|\ge M\varepsilon^{\frac12}}\big|\widehat{f}(x)-\widehat{f}(0)\big|^2\Big(\prod_{\ell=1}^{d}|x_\ell|^{2k_\ell}\Big)e^{-\frac{1}{2}|x|^2s^{2H}}dxds,
\end{align*}
where the result is easy to obtain when $1-2nH<H(2|\boldsymbol{k}|+d)<1$ because $\display\lim\limits_{\varepsilon\downarrow0}\widetilde{T}^{\sigma}_\varepsilon=+\infty$. When  $H(2|\boldsymbol{k}|+d)=1-2nH$, note that from Lemma \ref{2lma1}, for $\varepsilon$ small enough, 
\begin{align*}
	\varepsilon^{-n}\ln^{-1}(1&+\varepsilon^{-\frac12})\int_{0}^{T}\int_{|x|\le M}\big|\widehat{f}(\varepsilon^{\frac{1}{2}}x)-\widehat{f}(0)\big|^2\Big(\prod_{\ell=1}^{d}|x_\ell|^{2k_\ell}\Big)e^{-\frac{1}{2}(\sigma+\alpha(\varepsilon))|x|^2s^{2H}}dxds\\&\le C_{5,1}\ln^{-1}(1+\varepsilon^{-\frac12})\int_{0}^{T}\int_{|x|\le M}\big(|x|\wedge\varepsilon^{-\frac12}\big)^{2n}|x|^{2|\boldsymbol{k}|}e^{-\frac{1}{4}\sigma|x|^2s^{2H}}dxds\le C_{5,2}\ln^{-1}(1+\varepsilon^{-\frac12}).
\end{align*}
The results comes from  
\begin{align*}
	&\lim\limits_{\varepsilon\downarrow0}\varepsilon^{-n}\ln^{-1}(1+\varepsilon^{-\frac12})\int_{0}^{T}\int_{\R^d}\big|\widehat{f}(\varepsilon^{\frac{1}{2}}x)-\widehat{f}(0)\big|^2\Big(\prod_{\ell=1}^{d}|x_\ell|^{2k_\ell}\Big)e^{-\frac{1}{2}(\sigma+\alpha(\varepsilon))|x|^2s^{2H}}dxds\\=&\frac{1}{\sigma^{\frac{1}{2H}}}\lim\limits_{\varepsilon\downarrow0}\ln^{-1}(1+\varepsilon^{-\frac12})\int_{0}^{\widetilde{T}^{\sigma}_\varepsilon}\int_{\R^d}\big|\widehat{f}(x)-\widehat{f}(0)\big|^2\Big(\prod_{\ell=1}^{d}|x_\ell|^{2k_\ell}\Big)e^{-\frac{1}{2}|x|^2s^{2H}}dxds\\=&\frac{1}{H\sigma^{\frac{1}{2H}}}\lim\limits_{\varepsilon\downarrow0}\ln^{-1}(1+\widetilde{T}^{\sigma}_\varepsilon)\int_{0}^{\widetilde{T}^{\sigma}_\varepsilon}\int_{\R^d}\big|\widehat{f}(x)-\widehat{f}(0)\big|^2\Big(\prod_{\ell=1}^{d}|x_\ell|^{2k_\ell}\Big)e^{-\frac{1}{2}|x|^2s^{2H}}dxds\\=&\frac{1}{H\sigma^{\frac{1}{2H}}}\lim\limits_{\varepsilon\downarrow0}\ln^{-1}(1+\widetilde{T}^{\sigma}_\varepsilon)\int_{0}^{\widetilde{T}^{\sigma}_\varepsilon}\int_{\R^d}\big|\widehat{f}(x)-\widehat{f}(0)\big|^2\Big(\prod_{\ell=1}^{d}|x_\ell|^{2k_\ell}\Big)e^{-\frac{1}{2}|x|^2s^{2H}}dxds\\=&\frac{1}{H\sigma^{\frac{1}{2H}}}\int_{\R^d}\Big|\int_{\R^d}f(z)\frac{1}{n!}\big(z\cdot x\big)^ndz\Big|^2\Big(\prod_{\ell=1}^{d}|x_\ell|^{2k_\ell}\Big)e^{-\frac{1}{2}|x|^2}dx,
\end{align*}
where we use L'H\^opital's Rule and the fact $\display\lim\limits_{\varepsilon\downarrow0}\varepsilon^{-\frac{n}{2}}\big(\widehat{f}(\varepsilon^{\frac{1}{2}}u)-\widehat{f}(0)\big)=\int_{\R^d}f(v)\frac{(\iota u\cdot v)^n}{n!}dv$ for $f\in\mS_{d,n}$ in the last inequality.
\end{proof}
\begin{lemma}\label{5lma2}
	Suppose $n\ge1$, $d\ge1$, $\sigma>0$ and $k\ge0$. Let $0<t<T$. When $1-2nH\le H(2k+d)<1$,
	\begin{align*}
		\lim\limits_{\varepsilon\downarrow0}\bigg(	\ell_{\varepsilon,H,d}^{(k,n)}\bigg)^2\int_{t\ln^{-\alpha}(1+\varepsilon^{-\frac{1}{2}})}^{T}\int_{\R^d}\big(|\varepsilon^{\frac{1}{2}}x|\wedge1\big)^{2n}|x|^{2k}e^{-\frac{\sigma}{2}|x|^2s^{2H}}dxds=0
	\end{align*}
	for any $0<\alpha<1$. Moreover, for $f\in\widetilde{\mS}_{d,n}$, we have 
	\begin{align*}
		\lim\limits_{\varepsilon\downarrow0}\bigg(	\ell_{\varepsilon,H,d}^{(k,n)}\bigg)^2\int_{t\ln^{-\alpha}(1+\varepsilon^{-\frac{1}{2}})}^{T}\int_{\R^d}\big|\widehat{f}(\varepsilon^{\frac{1}{2}}x)-\widehat{f}(0)\big|^2\Big(\prod_{\ell=1}^{d}|x_\ell|^{2k_\ell}\Big)e^{-\frac{1}{2}(\sigma+\alpha(\varepsilon))|x|^2s^{2H}}dxdsdxds=0.
	\end{align*} 
\end{lemma}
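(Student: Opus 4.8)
The plan is to establish the first assertion by a direct estimate and then to obtain the second one from it by domination. Write $L_\varepsilon=\ln(1+\varepsilon^{-1/2})$ and $s_0=s_0(\varepsilon):=t\,L_\varepsilon^{-\alpha}$, so the inner time-integral runs over $[s_0,T]$. The whole argument rests on the elementary bound $\big(|\varepsilon^{1/2}x|\wedge1\big)^{2n}\le\varepsilon^{n}|x|^{2n}$, which gives, for every $s>0$,
\[
\int_{\R^d}\big(|\varepsilon^{1/2}x|\wedge1\big)^{2n}|x|^{2k}e^{-\frac{\sigma}{2}|x|^2s^{2H}}\,dx\le\varepsilon^{n}\int_{\R^d}|x|^{2(n+k)}e^{-\frac{\sigma}{2}|x|^2s^{2H}}\,dx=C\,\varepsilon^{n}\,s^{-H(2n+2k+d)},
\]
the last equality by passing to polar coordinates and substituting $u=|x|s^{H}$, the radial integral $\int_0^{\infty}u^{2(n+k)+d-1}e^{-\frac{\sigma}{2}u^2}\,du$ being finite. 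Hence the quantity in the first assertion is bounded above by $C\big(\ell_{\varepsilon,H,d}^{(k,n)}\big)^{2}\varepsilon^{n}\int_{s_0}^{T}s^{-H(2n+2k+d)}\,ds$, and it suffices to check this tends to $0$ in each of the two regimes contained in the hypothesis $1-2nH\le H(2k+d)<1$.

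In the critical regime $H(2n+2k+d)=1$ one has $\big(\ell_{\varepsilon,H,d}^{(k,n)}\big)^{2}\varepsilon^{n}=L_\varepsilon^{-1}$ and $\int_{s_0}^{T}s^{-1}\,ds=\ln(T/s_0)=\ln(T/t)+\alpha\ln L_\varepsilon$, so the bound becomes $C\,L_\varepsilon^{-1}\big(\ln(T/t)+\alpha\ln L_\varepsilon\big)$, which tends to $0$ because $L_\varepsilon\to\infty$ and $L_\varepsilon^{-1}\ln L_\varepsilon\to0$. In the non-critical regime, writing $\beta:=H(2n+2k+d)>1$, one has $\big(\ell_{\varepsilon,H,d}^{(k,n)}\big)^{2}\varepsilon^{n}=\varepsilon^{\,n+k+\frac d2-\frac1{2H}}$ while
\[
\int_{s_0}^{T}s^{-\beta}\,ds\le\frac{s_0^{\,1-\beta}}{\beta-1}=\frac{t^{\,1-\beta}}{\beta-1}\,L_\varepsilon^{\,\alpha(\beta-1)}.
\]
Since $\beta>1$ is equivalent to $n+k+\tfrac d2-\tfrac1{2H}>0$, the factor $\varepsilon^{\,n+k+\frac d2-\frac1{2H}}$ decays like a positive power of $\varepsilon$, which beats the polylogarithmic growth of $L_\varepsilon^{\,\alpha(\beta-1)}$; hence the product vanishes and the first assertion follows.

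For the second assertion I would reduce to the first. By Lemma \ref{2lma1}, $\big|\widehat{f}(\varepsilon^{1/2}x)-\widehat{f}(0)\big|^{2}\le C_f^{2}\big(|\varepsilon^{1/2}x|\wedge1\big)^{2n}$; also $\prod_{\ell=1}^{d}|x_\ell|^{2k_\ell}\le|x|^{2k}$ with $k=\sum_{\ell=1}^{d}k_\ell$; and $\alpha(\varepsilon)\to0$ yields $\sigma+\alpha(\varepsilon)\ge\sigma/2$ for all small $\varepsilon$, so $e^{-\frac12(\sigma+\alpha(\varepsilon))|x|^2s^{2H}}\le e^{-\frac{\sigma}{4}|x|^2s^{2H}}$. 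Thus the second integrand is dominated by $C_f^{2}\big(|\varepsilon^{1/2}x|\wedge1\big)^{2n}|x|^{2k}e^{-\frac{\sigma}{4}|x|^2s^{2H}}$, and the claim follows from the first assertion applied with $\sigma$ replaced by $\sigma/2$. The only delicate point is the bookkeeping in the two regimes: one must verify that the slowly varying factor produced by the singularity of $s^{-H(2n+2k+d)}$ at the moving endpoint $s_0$ — namely $\ln L_\varepsilon$ in the critical regime and $L_\varepsilon^{\alpha(\beta-1)}$ in the non-critical one — is always absorbed by the decaying prefactor. Because $s_0$ shrinks only logarithmically in $1/\varepsilon$ whereas the prefactors decay polynomially in $\varepsilon$ (or like $L_\varepsilon^{-1}$), this absorption is automatic, so there is no real obstacle beyond careful bookkeeping.
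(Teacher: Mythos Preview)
Your proof is correct and follows essentially the same approach as the paper: both use the domination $(|\varepsilon^{1/2}x|\wedge1)^{2n}\le\varepsilon^n|x|^{2n}$ together with the scaling of the Gaussian integral, and both deduce the second assertion from the first via Lemma~\ref{2lma1}. The only minor difference is in the non-critical regime, where the paper makes the change of variables $s\mapsto\varepsilon^{-1/(2H)}s$ to reduce to the tail of a finite integral, whereas you keep the same crude bound and use that a positive power of $\varepsilon$ beats any power of $\ln(1/\varepsilon)$; both arguments are equally valid.
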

\begin{proof}
	In the case $1-2nH=H(2k+d)$, the result comes from 
	\begin{align*}
		\varepsilon^{-n}&\ln^{-1}(1+\varepsilon^{-\frac12})\int_{t\ln^{-\alpha}(1+\varepsilon^{-\frac{1}{2}})}^{T}\int_{\R^d}\big(|\varepsilon^{\frac{1}{2}}x|\wedge1\big)^{2n}|x|^{2k}e^{-\frac{\sigma}{2}|x|^2s^{2H}}dxds\\&\le\ln^{-1}(1+\varepsilon^{-\frac12})\int_{t\ln^{-\alpha}(1+\varepsilon^{-\frac{1}{2}})}^{T}\int_{\R^d}|x|^{2k+2n}e^{-\frac{\sigma}{2}|x|^2t^{2H}\ln^{-2H\alpha}(1+\varepsilon^{-\frac{1}{2}})}dxds\le C_{5,3}t^{-1}\ln^{\alpha-1}(1+\varepsilon^{-\frac12}).
	\end{align*}
	And in the case $1-2nH< H(2k+d)<1$, the result comes from 
	\begin{align*}
		\varepsilon^{\frac{1}{2}(2k+d-\frac{1}{H})}\int_{t\ln^{-\alpha}(1+\varepsilon^{-\frac{1}{2}})}^{T}&\int_{\R^d}\big(|\varepsilon^{\frac{1}{2}}x|\wedge1\big)^{2n}|x|^{2k}e^{-\frac{\sigma}{2}|x|^2s^{2H}}dxds\\&\le\int_{t\varepsilon^{-\frac{1}{2H}}\ln^{-\alpha}(1+\varepsilon^{-\frac{1}{2}})}^{+\infty}\int_{\R^d}\big(|x|\wedge1\big)^{2n}|x|^{2k}e^{-\frac{\sigma}{2}|x|^2s^{2H}}dxds\to0
	\end{align*}
	as $\varepsilon\downarrow0$, where we use the fact $\display\lim\limits_{\varepsilon\downarrow0}\varepsilon^{-\frac{1}{2H}}\ln^{-\alpha}(1+\varepsilon^{-\frac{1}{2}})=+\infty$. The rest of the lemma comes from Lemma \ref{2lma1}. 
\end{proof}
\begin{lemma}\label{5lma3}
Suppose  $H>0$, $d\ge1$, $a\ge0$, $b>0$ and $1-2Hb\le H(2a+d)<1$. For any $m\ge1$, let $q_j\in\{0,1,2\}$, $j=1,2,\cdots,m$ satisfying $\display\sum_{j=1}^{m}q_j=m$. Denote $m_0,m_1,m_2$ the number of $0,1,2$ in $\{q_1,q_2,\cdots,q_m\}$.
Then  $m_0=m_2=\frac{m-m_1}{2}$ and given $T>0$, there exists a positive constant $C_{H,d,T}^{a,b,m}$, s.t. 
\begin{align}\label{case1}
	\Big(\ell_{\varepsilon,H,d}^{(a,b)}\Big)^m\prod_{j=1}^{m}\Big(\int_{0}^{T}\int_{\R^d}&\Big[|x_j|^a\Big(\big|\varepsilon^{\frac{1}{2}}x_j\big|^b\wedge1\Big)\Big]^{q_j}e^{-|x_j|^2t_j^{2H}}dx_jdt_j\Big)\nonumber\\&\le C_{H,d,T}^{a,b,m}\begin{cases}
		\ln^{-\frac{m_1}{2}}(1+\varepsilon^{-\frac12}),&\textit{if }1-2Hb=H(2a+d);\\\varepsilon^{-m_1\delta},&\textit{if }1-2Hb<H(2a+d)<1.
	\end{cases}
\end{align}
where $\display\delta=\frac18\Big\{(\frac{1}{H}-d)\wedge(2a+2b+d-\frac1H)\Big\}$. Moreover, there exists a positive constant $\widetilde{C}_{H,d,T}^{a,b,m}$, s.t. for any $0<t\le T$, 
\begin{align}\label{caseT}
	\Big(\ell_{\varepsilon,H,d}^{(a,b)}\Big)^m\prod_{j=1}^{m}\Big(\int_{0}^{t}\int_{\R^d}\Big[|x_j|^a\Big(\big|\varepsilon^{\frac{1}{2}}x_j\big|^b\wedge1\Big)\Big]^{q_j}e^{-|x_j|^2t_j^{2H}}dx_jdt_j\Big)\le \widetilde{C}_{H,d,T}^{a,b,m}t^{\frac{m}{2}(1-Hd)}.
\end{align}
\end{lemma}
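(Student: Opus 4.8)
The plan is to exploit the product structure of the left-hand sides of (\ref{case1}) and (\ref{caseT}): both are already displayed as products $\prod_{j=1}^m I_j$ of single integrals $I_j=\int_0^T\int_{\R^d}\big[|x|^a(|\varepsilon^{\frac12}x|^b\wedge1)\big]^{q_j}e^{-|x|^2t^{2H}}dx\,dt$ (resp.\ over $[0,t]$), so everything reduces to estimating individual factors according to the value of $q_j$. The identity $m_0=m_2=\tfrac{m-m_1}{2}$ is immediate from $m_0+m_1+m_2=m$ and $\sum_jq_j=m_1+2m_2=m$. In particular $m_1+2m_2=m$, so the $m$ available copies of $\ell_{\varepsilon,H,d}^{(a,b)}$ can be allocated as two copies to each $q_j=2$ factor and one copy to each $q_j=1$ factor, leaving the $q_j=0$ factors unweighted.

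For $q_j=0$ one has $I_j=\int_0^T\int_{\R^d}e^{-|x|^2t^{2H}}dx\,dt=c_d\int_0^T t^{-Hd}dt<\infty$, finite because $H(2a+d)<1$ forces $Hd<1$; over $[0,t]$ it equals $\tfrac{c_d}{1-Hd}t^{1-Hd}$. For $q_j=1$ I would invoke Lemma \ref{2lma3} with $\gamma=1$ and exponents $(a,b)$, and for $q_j=2$ the same lemma with exponents $(2a,2b)$ (using $(\min(u,1))^2=\min(u^2,1)$ to absorb the square). Under the standing hypothesis $1-2Hb\le H(2a+d)<1$, the $q_j=2$ factor lies in the last two alternatives of Lemma \ref{2lma3}, and a direct check shows $(\ell_{\varepsilon,H,d}^{(a,b)})^2I_j$ is bounded uniformly in $\varepsilon$ (in the critical case one uses $\ln(1+\varepsilon^{-1/(2H)}T)\asymp\ln(1+\varepsilon^{-1/2})$ as $\varepsilon\downarrow0$). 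For $q_j=1$ all three alternatives of Lemma \ref{2lma3} can occur: in the critical case $1-2Hb=H(2a+d)$ one has $H(a+d)<1-Hb$, the first alternative applies and $\ell_{\varepsilon,H,d}^{(a,b)}I_j\le C\ln^{-1/2}(1+\varepsilon^{-1/2})$; in the strict case a short arithmetic check gives $\ell_{\varepsilon,H,d}^{(a,b)}I_j\le C\varepsilon^{\rho}$ with $\rho$ equal to $\tfrac14(2a+2b+d-\tfrac1H)$ in the first two alternatives and $\tfrac14(\tfrac1H-d)$ in the third, both $\ge2\delta$, hence $\le C\varepsilon^{\delta}$ for $\varepsilon$ small. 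Multiplying the $q_j=0$, $q_j=1$, $q_j=2$ factors and recalling $m=m_1+2m_2$ gives (\ref{case1}).

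For (\ref{caseT}) the same allocation is used, but now the $t$-dependence must be retained, and this is the delicate point. The $q_j=0$ factors contribute $c\,t^{1-Hd}$ each, and the $q_j=2$ factors together with their two copies of $\ell_{\varepsilon,H,d}^{(a,b)}$ are $O(1)$ by the previous paragraph (applied on $[0,T]$; no power of $t$ is available there, and none is needed). For a $q_j=1$ factor, bounding $\min(u,1)\le1$ leaves too much $\varepsilon$-growth to be killed by $\ell_{\varepsilon,H,d}^{(a,b)}$, while $\min(u,1)\le u$ loses too much $t$-power when $H(a+b+d)\ge1$; the remedy is the interpolation $\min(|\varepsilon^{\frac12}x|^b,1)\le|\varepsilon^{\frac12}x|^{b\theta}$, $\theta\in[0,1]$, with the balanced choice $\theta=\tfrac{1-H(2a+d)}{2Hb}$ (which is $\le1$ exactly by $1-2Hb\le H(2a+d)$, and $=1$ in the critical case). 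Then $\int_0^t\int_{\R^d}|x|^{a}|\varepsilon^{\frac12}x|^{b\theta}e^{-|x|^2s^{2H}}dx\,ds=c\,\varepsilon^{b\theta/2}t^{1-H(a+b\theta+d)}$; the $s$-integral converges precisely because $H(a+b\theta+d)<1$, which reduces to $Hd<1$; the factor $\varepsilon^{b\theta/2}$ cancels $\ell_{\varepsilon,H,d}^{(a,b)}$ exactly; and the residual exponent is $1-H(a+b\theta+d)=\tfrac12(1-Hd)$. Hence $\ell_{\varepsilon,H,d}^{(a,b)}I_j\le C\,t^{\frac12(1-Hd)}$ uniformly in $\varepsilon$ and $t\le T$ (the critical case, $\theta=1$, goes through verbatim, the spare $\ln^{-1/2}$ being discarded). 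Taking the product, the $t$-exponent is $m_0(1-Hd)+\tfrac{m_1}{2}(1-Hd)=(m_2+\tfrac{m_1}{2})(1-Hd)=\tfrac m2(1-Hd)$, which is (\ref{caseT}).

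The main obstacle I anticipate is precisely this last balancing act: one must verify simultaneously that the unique exponent $\theta$ cancelling the $\varepsilon$-weight is still admissible for the $s$-integration (both conditions being equivalent to $Hd<1$) and that the $t$-exponents contributed by the $q_j=0$, $q_j=1$, $q_j=2$ blocks add up to exactly $\tfrac m2(1-Hd)$ — which is forced by $m_0=m_2$ and $m_1+2m_2=m$. Everything else is a routine application of Lemma \ref{2lma3} and Gaussian scaling.
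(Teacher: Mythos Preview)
Your proof is correct and follows the same route as the paper: the factor-by-factor allocation of the $m$ copies of $\ell_{\varepsilon,H,d}^{(a,b)}$ using $m=m_1+2m_2$, the case split $q_j\in\{0,1,2\}$ handled via Lemma~\ref{2lma3}, and for (\ref{caseT}) the interpolation $\min(u^b,1)\le u^{b\theta}$ with $\theta=\tfrac{1-H(2a+d)}{2Hb}$ --- your exponent $b\theta=\tfrac1{2H}-a-\tfrac d2$ is exactly the one the paper uses, phrased there as the decomposition $b=(-a-\tfrac d2+\tfrac1{2H})+(a+b+\tfrac d2-\tfrac1H)$. One remark: your arithmetic yields $\ell_{\varepsilon,H,d}^{(a,b)}I_j\le C\varepsilon^{\delta}$ for each $q_j=1$ factor in the strict case, hence a total bound $C\varepsilon^{m_1\delta}\to0$, which is what the applications (Propositions~\ref{odd1}--\ref{alleven}) actually need; the exponent $-m_1\delta$ in the displayed (\ref{case1}) (and likewise $-\delta$ in the paper's own proof of the lemma) is a sign typo.
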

\begin{proof}
	The equality $m_0=m_2=\frac{m-m_1}{2}$ is apparent. Using Lemma \ref{2lma3}, it is easy to get for all $0<t\le T$, 
	\begin{align}\label{m0}
		\int_{0}^{t}\int_{\R^d}&\Big[|x_j|^a\Big(\big|\varepsilon^{\frac{1}{2}}x_j\big|^b\wedge1\Big)\Big]^{0}e^{-|x_j|^2t_j^{2H}}dx_jdt_j=\int_{0}^{t}\int_{\R^d}e^{-|x_j|^2t_j^{2H}}dx_jdt_j\le C_{5,4}t^{1-Hd}
	\end{align}
	and 
		\begin{align}\label{m2}
		\int_{0}^{t}\int_{\R^d}&\Big[|x_j|^a\Big(\big|\varepsilon^{\frac{1}{2}}x_j\big|^b\wedge1\Big)\Big]^{2}e^{-|x_j|^2t_j^{2H}}dx_jdt_j\le C_{5,5}\begin{cases}
			\varepsilon^{-\frac{1}{2}(2a+d-\frac{1}{H})},&\textit{if }1-2Hb<H(2a+d)<1;\\
			\varepsilon^{b}\ln(1+\varepsilon^{-\frac{1}{2H}}t),&\textit{if }1-2Hb=H(2a+d).
		\end{cases}
	\end{align}
	Note that  
		\begin{align*}
		\int_{0}^{T}\int_{\R^d}\Big[|x_j|^a\Big(\big|\varepsilon^{\frac{1}{2}}x_j\big|^b\wedge1\Big)\Big]e^{-|x_j|^2t_j^{2H}}dx_jdt_j\le& C_{5,6}\begin{cases}
			\varepsilon^{\frac{b}{2}}T^{1-H(a+b+d)},&\textit{if }1-Hb>H(a+d);\\\varepsilon^{\frac{b}{2}}\ln(1+\varepsilon^{-\frac{1}{2H}}T),&\textit{if }1-Hb=H(a+d);\\\varepsilon^{-\frac{1}{2}(a+d-\frac{1}{H})},&\textit{if }1-Hb<H(a+d)<1;
		\end{cases}\\\le& C_{5,T}\begin{cases}
		\varepsilon^{\frac{b}{2}},&\textit{if }1-2Hb=H(2a+d);\\\varepsilon^{-\frac{1}{4}(2a+d-\frac{1}{H})-\delta},&\textit{if }1-2Hb<H(2a+d)<1.
		\end{cases}
	\end{align*}
	The inequality (\ref{case1}) is easy to get from $m_0=m_2=\frac{m-m_1}{2}$. Moreover, the inequality (\ref{caseT}) comes from (\ref{m0}), (\ref{m2}) and 
	\begin{align*}
		\int_{0}^{t}\int_{\R^d}&\Big[|x_j|^a\Big(\big|\varepsilon^{\frac{1}{2}}x_j\big|^b\wedge1\Big)\Big]e^{-|x_j|^2t_j^{2H}}dx_jdt_j\\&\le \varepsilon^{-\frac{1}{4}(2a+d-\frac{1}{H})}\int_{0}^{t}\int_{\R^d}|x_j|^{-\frac{d}{2}+\frac{1}{2H}}e^{-|x_j|^2t_j^{2H}}dx_jdt_j\le C_{5,7}\varepsilon^{-\frac{1}{4}(2a+d-\frac{1}{H})}t^{\frac{1}{2}(1-Hd)},
	\end{align*}
	where  we use decomposition $b=(-a-\frac{d}{2}+\frac{1}{2H})+(a+b+\frac{d}{2}-\frac{1}{H})$ in the above inequalities.
\end{proof}
Finally we give a lemma about $X\in G^{\sigma,H}_d$, which is used in the proof of Proposition \ref{alleven}.
\begin{lemma}\label{5lma4}
	Assume $X\in G^{\sigma,H}_d$ and $n\ge2$ is an even integer. There exists  a non-negative decreasing function $\varphi(\gamma):(1,\infty)\to\mathbb{R}$ with $\lim\limits_{\gamma\to\infty}\varphi(\gamma)=0$, s.t. 
	\begin{equation*}
		\begin{split}
			\bigg|\Var\Big(\sum_{j=1}^{n}x_j\cdot \big(X_{t_j}-X_{t_{j-1}}\big)\Big)&-\mathrm{Var}\Big(\sum_{j=1}^{n/2}{x_{2j-1}}\cdot(X_{t_{2j-1}}-X_{t_{2j-3}})\Big)-\sigma\sum_{j=1}^{n/2}|x_{2j}|^2(\Delta{t_{2j}})^{2H}\bigg|\\&\le\varphi(\gamma)\bigg(\mathrm{Var}\Big(\sum_{j=1}^{n/2}{x_{2j-1}}\cdot(X_{t_{2j-1}}-X_{t_{2j-3}})\Big)+\sigma\sum_{j=1}^{n/2}|x_{2j}|^2(\Delta{t_{2j}})^{2H}\bigg)
		\end{split}
	\end{equation*}
	for any $0=t_{-1}=t_0<t_1<t_2<\cdots<t_n<\infty$ with $\frac{t_{2i}-t_{2i-1}}{t_{2j-1}-t_{2j-2}}<\frac{1}{\gamma}$, $i,j=1,2,\dots,\frac{n}{2}$ and  $x_1,x_2,\cdots,x_n\in\R^d$, where $\Delta{t_{j}}=t_j-t_{j-1},j=1,2,\cdots,n$.
\end{lemma}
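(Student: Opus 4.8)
The plan is to reduce to a one‑dimensional problem at small times, expand all the variances, and then bound every cross‑covariance by condition (C), every diagonal small‑interval variance by condition (B), and the remaining terms by the local non‑determinism (\ref{local}).

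First I would reduce to $d=1$: since $X^1,\dots,X^d$ are i.i.d., both $\Var\big(\sum_j x_j\cdot(X_{t_j}-X_{t_{j-1}})\big)$ and the variance of the merged sum $A:=\sum_j x_{2j-1}\cdot(X_{t_{2j-1}}-X_{t_{2j-3}})$ split as $\sum_{\ell=1}^d$ of the corresponding one‑dimensional quantities for the scalar coefficients $(x_{j\ell})_j$, while $\sigma\sum_j|x_{2j}|^2(\Delta t_{2j})^{2H}=\sum_\ell\big(\sigma\sum_j x_{2j\ell}^2(\Delta t_{2j})^{2H}\big)$, so adding the one‑dimensional inequality over $\ell$ suffices. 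Then, using that $X^1$ is $H$‑self‑similar, I replace each $t_i$ by $ct_i$ with $c=1/(\eta t_n)$: this multiplies each of the three variances and each $(\Delta t_{2j})^{2H}$ in the statement by $c^{2H}$ and leaves the ratios $\Delta t_{2i}/\Delta t_{2j-1}$ unchanged, so I may assume $t_n\le1/\eta$, after which (B) applies to every increment. Only $\gamma\ge\eta$ needs to be treated, as $\varphi$ is merely required to vanish at infinity.

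Write $I_k=X^1_{t_k}-X^1_{t_{k-1}}$ (with $I_0=0$, $a_{n+1}=0$), call $k$ \emph{large} when odd and \emph{small} when even; the hypothesis says every small increment is shorter than every large one by a factor $>\gamma$. Setting $Y=\sum_k a_kI_k$, $B=\sum_j a_{2j-1}I_{2j-1}$, $C=\sum_j a_{2j}I_{2j}$ and $A=B+D$ with $D=\sum_{m=1}^{n/2-1}a_{2m+1}I_{2m}$, one checks $Y-A=C-D=\sum_{m=1}^{n/2}(a_{2m}-a_{2m+1})I_{2m}$, so
\[
\Var(Y)-\Var(A)-\sigma\sum_m a_{2m}^2(\Delta t_{2m})^{2H}=2\Cov(B,C-D)+\Big(\Var(C)-\sigma\sum_m a_{2m}^2(\Delta t_{2m})^{2H}\Big)-\Var(D).
\]
The ingredients are: (a) for a large $I_{2j-1}$ and a small $I_{2m}$ their length ratio is $>\gamma$ or $<\gamma^{-1}$ according to their order, so case (i) or (ii) of (C) gives $|\Cov(I_{2j-1},I_{2m})|\le\psi(\gamma)(\Delta t_{2j-1})^H(\Delta t_{2m})^H$; (b) distinct small increments $I_{2m},I_{2m'}$ ($m<m'$) are separated by the large increment $I_{2m'-1}$, which is longer than both by $>\gamma$, so case (iii) of (C) gives $|\Cov(I_{2m},I_{2m'})|\le\psi(\gamma)(\Delta t_{2m})^H(\Delta t_{2m'})^H$; (c) by (B), using $t_{2m-1}\ge\Delta t_{2m-1}>\gamma\Delta t_{2m}$, $|\Var(I_{2m})-\sigma(\Delta t_{2m})^{2H}|\le\phi(1/\gamma)(\Delta t_{2m})^{2H}$ and $\Var(I_{2m})\le c(\Delta t_{2m})^{2H}$; (d) applying (\ref{local}) to the partition $0<t_1<t_3<\cdots<t_{n-1}$ gives $\sum_j a_{2j-1}^2(\Delta t_{2j-1})^{2H}\le\kappa_{H,n/2}^{-1}\Var(A)$; (e) $(\Delta t_{2m})^{2H}\le\gamma^{-2H}(\Delta t_{2m+1})^{2H}$. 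Using $|ab|\le\tfrac12(a^2+b^2)$, (c) and (b) bound $|\Var(C)-\sigma\sum_m a_{2m}^2(\Delta t_{2m})^{2H}|$ by $(\phi(1/\gamma)+c_n\psi(\gamma))\sum_m a_{2m}^2(\Delta t_{2m})^{2H}$; splitting $a_{2m}-a_{2m+1}$ and absorbing the term $a_{2m+1}I_{2m}$ into $\Var(A)$ via (e) then (d) bounds $|2\Cov(B,C-D)|$ by $c_n\psi(\gamma)(\Var(A)+\sigma\sum_m a_{2m}^2(\Delta t_{2m})^{2H})$; and (c),(d),(e) bound $|\Var(D)|$ by $c_n\gamma^{-2H}\Var(A)$. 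Summing, the left side is $\le\varphi_0(\gamma)(\Var(A)+\sigma\sum_m a_{2m}^2(\Delta t_{2m})^{2H})$ with $\varphi_0(\gamma)=c_n(\psi(\gamma)+\phi(1/\gamma)+\gamma^{-2H})$; taking $\varphi(\gamma)$ to be the supremum over all times and coefficients admissible for the given $\gamma$ of the ratio of the two sides makes $\varphi$ non‑negative, decreasing (the admissible family shrinks as $\gamma$ grows), $\le\varphi_0(\gamma)\to0$, and finite.

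The step I expect to be the real obstacle is $2\Cov(B,C-D)$: one must notice that the blanket ratio hypothesis places \emph{every} large–small increment pair within the scope of (C) (via cases (i)/(ii), not just adjacent pairs), and that the pieces $a_{2m+1}I_{2m}$, which pair a large‑interval coefficient with a small increment, are not killed by covariance decay and must instead be controlled by the length gap $(\Delta t_{2m})^{2H}\le\gamma^{-2H}(\Delta t_{2m+1})^{2H}$ together with (\ref{local}) applied to the merged partition. Identifying which of (i), (ii), (iii) applies in each case and checking $\Delta t_{2m}/t_{2m-1}\le1/\gamma$ are then routine.
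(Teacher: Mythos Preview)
Your argument is correct and essentially identical to the paper's: both decompose the full sum as $S_{odd,1}+S_{even,1}-S_{even,2}$ (in your notation $A+C-D$, with $S_{odd,1}-S_{even,2}=A-D=B$, so the two variance expansions coincide algebraically) and then bound $\Var(D)$, the large--small cross covariances, and the small--small diagonal terms using respectively the length gap with (\ref{local}), property~(C), and property~(B), exactly along the lines (a)--(e) you describe. Your explicit reduction to $d=1$ and the self-similar rescaling to force $t_n\le 1/\eta$ (so that (B) is actually applicable with $\phi(1/\gamma)$) are clean preparatory steps that the paper's proof leaves implicit.
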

\begin{proof}
	Consider decomposition
	\begin{align*}
		\sum_{j=1}^{n}x_j\cdot \big(X_{t_j}-X_{t_{j-1}}\big)&=\sum_{j=1}^{n/2}{x_{2j-1}}\cdot(X_{t_{2j-1}}-X_{t_{2j-3}})+\sum_{j=1}^{n/2}x_{2j}\cdot(X_{t_{2j}}-X_{t_{2j-1}})\\&-\sum_{j=1}^{n/2}x_{2j-1}\cdot(X_{t_{2j-2}}-X_{t_{2j-3}}):=S_{odd,1}+S_{even,1}-S_{even,2}
	\end{align*}
	and
	\begin{align*}
		\Var\big(S_{odd,1}+S_{even,1}-S_{even,2}\big)&=\Var\big(S_{odd,1}\big)+\Var\big(S_{even,1}\big)-\Var\big(S_{even,2}\big)\\&+2\Cov\big(S_{odd,1}-S_{even,2},S_{even,1}\big)-2\Cov\big(S_{odd,1}-S_{even,2},S_{even,2}\big).
	\end{align*}
	Given any $\gamma>1$, using $\frac{t_{2i}-t_{2i-1}}{t_{2j-1}-t_{2j-2}}<\frac{1}{\gamma}$ for any $i,j=1,2,\cdots,n$, local non-determinism (\ref{local}) and Property (\ref{proc}), we can get 
	\begin{align*}
		\big|\Var\big(S_{even,2}\big)\big|\le \frac{C_{5,8}}{\gamma^{2H}}\sum_{j=1}^{n/2}|x_{2j-1}|^2\big(t_{2j-1}-t_{2j-3}\big)^{2H}\le\frac{C_{5,9}}{\gamma^{2H}}\mathrm{Var}\Big(\sum_{j=1}^{n/2}{x_{2j-1}}\cdot(X_{t_{2j-1}}-X_{t_{2j-3}})\Big),
	\end{align*}\begin{align*}
	\big|\Cov\big(S_{odd,1}-S_{even,2},S_{even,1}\big)\big|&\le C_{5,10}\beta(\gamma)\Big(\sum_{j=1}^{n/2}|x_{2j-1}|^2\big(t_{2j-1}-t_{2j-2}\big)^{2H}+\sum_{j=1}^{n/2}|x_{2j}|^2\big(t_{2j}-t_{2j-1}\big)^{2H}\Big)\\
	\le C_{5,11}&\beta(\gamma)\Big(\Var\Big(\sum_{j=1}^{n/2}{x_{2j-1}}\cdot(X_{t_{2j-1}}-X_{t_{2j-3}})\Big)+\sum_{j=1}^{n/2}|x_{2j}|^2\Var\Big(X_{t_{2j}}-X_{t_{2j-1}}\Big)\Big)
	\end{align*}
	and
	\begin{align*}
		\big|\Cov\big(S_{odd,1}-S_{even,2},S_{even,2}\big)\big|\le& C_{5,12}\beta(\gamma)\Big(\sum_{j=1}^{n/2}|x_{2j-1}|^2\big(t_{2j-1}-t_{2j-2}\big)^{2H}+\sum_{j=1}^{n/2}|x_{2j-1}|^2\big(t_{2j-2}-t_{2j-3}\big)^{2H}\Big)\\\le& C_{5,13}\beta(\gamma)\Var\Big(\sum_{j=1}^{n/2}{x_{2j-1}}\cdot(X_{t_{2j-1}}-X_{t_{2j-3}})\Big)
	\end{align*}
	for some $\beta(\gamma)$ satisfying $\lim\limits_{\gamma\to\infty}\beta(\gamma)=0$. Moreover, 
	\begin{align*}
		\Var\big(S_{even,1}\big)=\sum_{j=1}^{n/2}|x_{2j}|^2\Var\Big(X_{t_{2j}}-X_{t_{2j-1}}\Big)+2\sum_{i<j}\Cov\Big(x_i\cdot\big(X_{t_{2i}}-X_{t_{2i-1}}\big),x_j\cdot\big(X_{t_{2j}}-X_{t_{2j-1}}\big)\Big),
	\end{align*}
	where using Property (\ref{proc}) we get 
	\begin{align*}
		\Big|\sum_{i< j}\Cov\Big(x_i\cdot\big(X_{t_{2i}}-X_{t_{2i-1}}\big),x_j\cdot\big(X_{t_{2j}}-X_{t_{2j-1}}\big)\Big)\Big|\le& C_{5,14}\widetilde{\beta}(\gamma)\Big(\sum_{i< j}|x_i||x_j|\big(t_{2i}-t_{2i-1}\big)^H\big(t_{2j}-t_{2j-1}\big)^H\Big)\\\le& C_{5,15}\widetilde{\beta}(\gamma)\sum_{j=1}^{n/2}|x_{2j}|^2\Var\Big(X_{t_{2j}}-X_{t_{2j-1}}\Big)
	\end{align*}
	for some $\widetilde{\beta}(\gamma)$ satisfying $\lim\limits_{\gamma\to\infty}\widetilde{\beta}(\gamma)=0$ because of $\display\max\Big\{\frac{t_{2i}-t_{2i-1}}{t_{2j-1}-t_{2i}},\frac{t_{2j}-t_{2j-1}}{t_{2j-1}-t_{2i}}\Big\}<\frac{1}{\gamma}$ for any $1\le i<j\le\frac{n}{2}$. Finally, the desired result comes from 
	\begin{align*}
		\big(\sigma-\phi(\gamma)\big)\big(t_{2j}-t_{2j-1}\big)^{2H}\le\Var\Big(X_{t_{2j}}-X_{t_{2j-1}}\Big)\le\big(\sigma+\phi(\gamma)\big)\big(t_{2j}-t_{2j-1}\big)^{2H}
	\end{align*}
	 for some $\phi(\gamma)$ satisfying $\lim\limits_{\gamma\to\infty}\phi(\gamma)=0$ because of Property (\ref{prob}) and $\frac{t_{2j}-t_{2j-1}}{t_{2j-1}}<\frac{1}{\gamma},j=1,2,\cdots,\frac{n}{2}$.
\end{proof}
	
			$\begin{array}{cc}
			\begin{minipage}[t]{1\textwidth}
				{\bf Minhao Hong}\\
				School of Science, Shanghai Maritime University, Shanghai 201306, China\\
				\texttt{hongmhmath@163.com}
			\end{minipage}
			\hfill
		\end{array}$
\end{document}